\numberwithin{equation}{section}
\newtheorem{theorem}{Theorem}[section]
\newtheorem{lemma}{Lemma}[section]
\newtheorem{proposition}{Proposition}[section]
\newtheorem{definition}{Definition}[section]
\newtheorem{remark}{Remark}[section]
\newcommand{\f}{\frac}
\newcommand{\R}{\mathbb {R}}
\newcommand{\Z}{\mathbb {Z}}
\newcommand{\mc}{\mathcal}
\date{}
\begin{document}

	\title{\bf Traveling waves and spreading speeds for time-space periodic
monotone systems\thanks{The research leading to these results has received funding from the European Research Council under the European UnionÕs Seventh Framework Programme
(FP/2007-2013) / ERC Grant Agreement n.321186 - ReaDi - Reaction-Diffusion Equations, Propagation and
Modelling. J. Fang's research is supported in part by the National Natural Science Foundations of China and the Fundamental Research Funds for Central Universities. X.-Q. Zhao's research is supported in part by the Natural Science and Engineering Research Council of Canada. Corresponding author: zhao@mun.ca (X.-Q. Zhao).}}

\author{{Jian Fang $^\text{a,b}$, Xiao Yu $^\text{c}$ and Xiao-Qiang Zhao $^\text{c}$}
\vspace{2mm}\\
{\small $^\text{a}$   Ecole des Hautes Etudes en Sciences Sociales, CAMS,}\\
{\small 75244 Paris, Cedex 13, France}\\
{\small $^\text{b}$ Department of Mathematics, Harbin Institute of Technology,}\\
{\small Harbin, 150001, China}\\{\small $^\text{c}$ Department of Mathematics and Statistics, Memorial University of Newfoundland,}\\
{\small St. John's, NL A1C  5S7, Canada}}

\maketitle

\begin{abstract}
The theory of traveling waves and spreading speeds is developed for time-space periodic monotone semiflows with monostable structure. By using traveling waves of the associated Poincar\'e maps in a strong sense, we establish the existence of time-space periodic traveling waves and spreading speeds.
We then apply these abstract results to a two species competition reaction-advection-diffusion
model. It turns out that the minimal wave speed exists and coincides with the single spreading
speed for such a system no matter whether the spreading speed is linearly determinate.
We also obtain a set of sufficient conditions for the spreading speed to be linearly
determinate.
\end{abstract}

{\bf Key words:} Monotone semiflows, time-space periodicity, traveling waves, spreading speeds, linear determinacy, competitive systems.

\smallskip
	
{\bf AMS Subject Classification: }  35C07, 35K57, 37C65, 37N25, 92D25

\section{Introduction}

Periodic environment of space and/or time is one of the useful approximations to understand the influence of the environmental heterogeneity on the propagation phenomena arising from ecological and biological processes. The study of reaction-diffusion equations in space-periodic media goes back to Freidlin \cite{Fri} and Freidlin and Gartner\cite{GF}. The notion of pulsating traveling fronts was introduced by Shigesada, Kawasaki and Teramoto \cite{SKT}. The front solution having the exact form $u(t,x)=U(x,x-ct)$ was found and constructed by Xin \cite{Xin91}. It is also called a periodically varying wavefront by Hudson and Zinner \cite{HZ95}. These two notions of fronts are equivalent when the speed is not zero. Berestycki and Hamel \cite{BH02CPAM} established various existence, uniqueness and monotonicity of pulsating fronts for a general reaction-diffusion equation with combustion-type nonlinearity or monostable nonlinearity. Hamel and Roques \cite{HR11} gave a complete classification of all KPP pulsating fronts and obtained some global stability properties for the fronts including the one with minimal speed. Weinberger \cite{Wein02} proved the existence of the minimal wave speed and its coincidence with the spreading speed for a recursion defined by an order-preserving compact operator of monostable type, without assuming a KPP type condition. For a monostable semiflows in one-dimensional periodic environment, Liang and Zhao \cite{Liang2} introduced a topologically conjugate semiflow defined in spatially discrete homogeneous environment and then showed that the spreading speed exists and coincides with the minimal wave speed of the front having the form $U(x,x-ct)$. For monotone semiflows of bistable type, Fang and Zhao \cite{FZJEMS}  interpreted the bistability from a monotone dynamical system point of view to find a link between the monostable subsystems and bistable system itself, which is
used to establish the existence of bistable wavefronts.
We refer to \cite{KRS, CDM, Shen1, Shen2} for nonlocal dispersal equations, and two survey papers \cite{Xin, HR14} for more references. There are also quite a few
investigations on time-periodic fronts of reaction-diffusion equations, see, e.g.,
 \cite{ABC99,BaoWangJDE13,Fre, MZ, ZR,ZhaoRuanJDE14,ZZ}
and references therein.
 For time-periodic semiflows in one dimensional continuous medium, Liang, Yi and Zhao \cite{LYZ} used the wavefront $W(x-c\omega)$  obtained for the Poincar\'e map $Q_\omega$ to construct a two-variable function $U(t,\xi):=Q_t[W](\xi+ct)$, which is then shown to be a time-periodic traveling wave for the semiflow.  However, when the medium is discrete, say $\Z$ for instance, such a construction may not give rise to a traveling wave since $Q_t[W](\xi)$ is not well defined for all $\xi\in \R$.
We will obtain traveling waves in a strong sense for the associated Poincar\'e map so that this evolution approach is still applicable.

For reaction-diffusion equations with time-space periodicity in $\R^N$:
\begin{equation}
\partial_t u-\nabla \cdot (A(t,x)\nabla u)+q(t,x)\cdot \nabla u=f(t,x,u),
\end{equation}
Nadin \cite{Nadin-JMPA} introduced the following definition of pulsating traveling fronts:
\begin{definition}\label{wave-Nadin}
A function $u(t,x)$ is a pulsating traveling front of speed $c$ in the direction $-e$ that connects $p^-$ to $p^+$ if it can be written as $u(t,x)=\phi(x\cdot e+ct,t,x)$, where $\phi\in L^\infty(\R\times\R\times\R^N)$ is such that for almost every $y\in\R$, the function $(t,x)\mapsto \phi(y+x\cdot e+ct,t,x)$ satisfies the above equation.  The function $\phi$ is requested
 to be periodic in its second and third variables and to satisfy
\begin{equation}
\begin{cases}
\phi(z,t,x)-p^-(t,x)\to 0\quad \text{as $z\to -\infty$ uniformly in $(t,x)\in\R\times\R^N$},\\
\phi(z,t,x)-p^+(t,x)\to 0\quad \text{as $z\to +\infty$ uniformly in $(t,x)\in\R\times\R^N$}.
\end{cases}
\end{equation}
\end{definition}
This definition was shown in \cite{Nadin-JMPA} to be equivalent to the one introduced by Nolen, Rudd and Xin \cite{NolenRuddXin}, where an auxiliary equation was used in the definition.
The minimal wave speed of such pulsating fronts was established in \cite{Nadin-JMPA} under a KPP type condition and the following monostability condition: (i) there is a positive continuous space-time periodic solution $p$; (ii) if $u$ is a space periodic solution such that $u\le p$ and $\inf_{(t,x)\in \R\times\R^N}u(t,x)>0$, then $u\equiv p$; (iii) $u\equiv 0$ is an unstable solution in the sense that the associated generalized eigenvalue is positive,  where the generalized eigenvalue was studied in \cite{Nadin-Ann}. An upper and a lower bounds were given for the minimal wave speed (if it exists) when the KPP condition does not hold. The spreading speed as well as the tail behavior and the regularity of the wave were also studied there. One may ask the following questions: Does the minimal wave speed exist when the KPP type condition does not hold? Can $u(t,x;y):=\phi(y+x\cdot e+ct,t,x)$ satisfy the equation for any $y\in\R$? Can such a result be established for systems admitting possible semi-trivial time-space periodic solutions?  We will give affirmative answers to these questions.

 Most recently, Rawal, Shen and Zhang \cite{RawalShenZhang14} introduced the following definition of time-space periodic traveling waves for a nonlocal dispersal Fisher-KPP equation:
\begin{definition}\label{wave-Shen}
An entire solution $u(t,x)$ is called a traveling wave solution connecting $u^*(t,x)$ and $0$ and propagating in the direction of $e$ with speed $c$ if there is a bounded function $\Phi:\R^N\times\R\times\R^N\to \R_+$ satisfying that $\Phi$ is locally Lebesgue measurable, $u(t,x;\Phi(\cdot,0,z))$ exists for all $t\in\R$,
\begin{equation*}
u(t,x;\Phi(\cdot,0,z))=\Phi(x-cte,t,z+cte),\quad t\in\R,z\in\R^N
\end{equation*}
\begin{equation*}
\lim_{x\cdot e\to -\infty}(\Phi(x,t,z)-u^*(t,x+z))=0, \quad \lim_{x\cdot e\to+\infty} \Phi(x,t,z)=0,\, \, \text{uniformly in $(t,z)$,}
\end{equation*}
\begin{equation*}
\Phi(x,t,z-x)=\Phi(x',t,z-x'),\quad \text{ $x,x'\in\R^N$ with $x\cdot e=x'\cdot e$,}
\end{equation*}
and
\begin{equation*}
\Phi(x,t+T,z)=\Phi(x,t,z+p_ie_i)=\Phi(x,t,z),\quad x,z\in\R^N.
\end{equation*}
\end{definition}
Let  $\phi(\xi,t,x):=\Phi(y,t,x-y)$ with $\xi=y\cdot e$. It then follows that for any $z\in\R^N$, $\phi(x\cdot e-ct,t,x+z)$ is a solution of the given evolution equation. Note that two quite different approaches were used in \cite{Nadin-JMPA} and \cite{RawalShenZhang14}, respectively, to prove the existence of traveling waves. For further investigations on KPP type nonlocal evolution equations, we refer to Shen \cite{ShenTAMS}, Kong and Shen \cite{KongShenJDDE}, and references therein.

This paper consists of two parts. In the first one, we establish the traveling waves and spreading speeds for an $\omega$-time
periodic and $L$-space periodic monotone semiflow $\{Q_t\}_{t\in\mc{T}}$ of monostable type on some subsets of the space $\mc{C}$ consisting of all continuous functions from one-dimensional unbounded medium $\mc{H}$ to the Banach lattice $(X,X_+,\|\cdot\|)$ with $X=C(\Omega,\R^l)$, where $\Omega$ is a compact metric space, the evolution time $\mc{T}$ is $\R_+$ or $\Z_+$, and the medium $\mc{H}$ is $\R$ or $\Z$. In the second part, we address the aforementioned questions by applying the obtained results to a time-space periodic competition model in the medium $\R$.

Below we introduce the definition of traveling wave with speed $c$ for time-space periodic semiflows.
\begin{definition}\label{wave-our-def}
A function $W: \mc{T}\times \mc{H}\times \R \to X $ is said to a traveling wave for the semiflow $\{Q_t\}_{t\in\mc{T}}$ provided that
\begin{equation}\label{wave-definition}
Q_t[W(0,\cdot,\cdot+y)](x)=W(t,x,x+y-ct), \quad \forall t\in\mc{T}, x\in\mc{H}, y\in\R,
\end{equation}
\begin{equation}\label{wave-periodicity}
\text{$W(t,x,\xi)$ is $\omega$-periodic in $t$, $L$-periodic in $x$, non-increasing in $\xi$,}
\end{equation}
and
\begin{equation}\label{wave-limit}
\text{$W(t,x,\pm \infty)$ exist such that  $Q_t[W(0,\cdot,\pm\infty)]=W(t,\cdot,\pm\infty)$.}
\end{equation}
\end{definition}
In \eqref{wave-definition}, for any $y\in\R$, $W(0,\cdot,\cdot+y)$ is understood as a one variable function which is an element of $\mc{C}$. We say that $W(t,x,\xi)$ connects $\omega$-time periodic and $L$-space periodic function $\beta_1$ to $\beta_2$ if $\lim_{\xi\to-\infty} |W(t,x,\xi)-\beta_1(t,x)|=0$ and $\lim_{\xi\to+\infty} |W(t,x,\xi)-\beta_2(t,x)|=0 $ uniformly in $t\in\mc{T}$ and $x\in\mc{H}$.

One may observe the differences between Definitions \ref{wave-Nadin} and \ref{wave-our-def}. For instance, the medium is $\R^N$ in Definition \ref{wave-Nadin} and $\R$ or $\Z$ in Definition \ref{wave-our-def}. In section 2.1 we will explain how these two definitions are relevant, why \eqref{wave-definition}  can hold for all $y\in\R$, and how the function $W$ can be extended
 to obtain an entire orbit for the given semiflow.

The mono-stability of the semiflow will be defined later in section 2.1 by using its Poincar\'e map $Q_\omega$ restricted on the space of $L$-periodic functions.
Since semi-trivial time-space periodic solutions may exist in certain non-scalar evolution systems, we
need to introduce one more critical speed. In general, there are two kinds of spreading speeds (and minimal wave speeds) for semiflows and they are not necessarily identical or linearly determinate. For specific evolution systems, it is highly nontrivial to find appropriate conditions for the existence of a single
spreading speed and its linear determinacy.
We will illustrate this by considering a two species competition model.

Our strategy to establish the minimal wave speed for the semiflow is the following: (1) construct a map $P_\omega$ in homogeneous medium $\Z$ such that it is topologically conjugate to the Poincar\'e map $Q_\omega$ in periodic medium $\mc{H}$; (2) extend $P_\omega$  to a larger map $\tilde{P}_\omega$ in homogeneous medium $\R$ but with very weak compactness even if $Q_\omega$ is compact; (3) show that $\tilde{P}_\omega$ fits the framework of \cite{FZ} to overcome the difficulty induced by non-compactness; (4) construct the wave for $\{Q_t\}_{t\in\mc{T}}$ using the evolution approach, which
is applicable because the medium of $\tilde{P}_\omega$ is $\R$. The spreading speed will be
 obtained by a similar idea but the phase space for $\tilde{P}_\omega$  is selected in a different way.

In the second part, we apply the theory developed for monotone semiflows to the following two species competition  reaction-advection-diffusion model with time and space periodicity:
\begin{eqnarray}\label{VL}
 & &\frac{\partial u_1}{\partial t}=L_1u_1+u_1(b_1(t,x)-a_{11}(t,x)u_1-a_{12}(t,x)u_2), \\
 & &\frac{\partial u_2}{\partial t}=L_2u_2+u_2(b_2(t,x)-a_{21}(t,x)u_1-a_{22}(t,x)u_2),\quad t>0, \ x\in\mathbb{R}. \nonumber
 \end{eqnarray}
Here $L_iu=d_i(t,x)\frac{\partial^2 u}{\partial x^2}-g_i(t,x)\frac{\partial u}{\partial x}, i=1,2$, $u_1$ and $u_2$
denote the population densities of two competing species in  $\omega$-time and $L$-space periodic environment, respectively, $d_i(t,x)$, $g_i(t,x)$ and $b_i(t,x)$ are  diffusion, advection and growth rates of
the $i$-th species ($i=1,2$), respectively,
and $a_{ij}(t,x)(1\le i,j\le2)$ are inter- and intra-specific competition coefficients.
In order to verify the mono-stability assumption, we first find two semi-trivial time-space periodic solutions $(u^*_1(t,x),0)$ and $(0,u^*_2(t,x))$, one of which, under a set of conditions, is shown to be globally stable for system \eqref{VL} with periodic initial datum. Since $(0,0)$ is always a solution between the two semi-trivial time-space periodic solutions with respect to the competitive ordering, there might be two spreading speeds in general. Due to the structure of competition, we can construct  upper solutions to show these two speeds (having different definitions) are identical. Some sufficient conditions for the linear determinacy of the speed are also derived. For the reaction-diffusion competition model studied in \cite{HMP} with unbounded domain, we obtain more explicit conditions  for the existence of the minimal wave speed. In the case where there is no spatial heterogeneity in \eqref{VL} (i.e., all coefficients are independent of $x$), our analysis shows that the minimal wave speed obtained in \cite{ZR} is also the single spreading speed for such a system.

For two species time-periodic and space-dependent reaction-diffusion competition models in a bounded domain, Hess and Lazer \cite{Hess2} (see also \cite{Hess})  studied the existence, stability and attractivity of nonnegative time-periodic solutions of model systems. Hutson, Mischaikow and Pol\'{a}\u{c}ik  \cite{HMP} investigated the effect of different diffusion rates on the survival of two phenotypes of a species, and showed that the interaction between temporal and spatial variability leads to a quite different result compared with the autonomous case \cite{DHMP}, which concluded that the phenotype with the slower diffusion rate always wins the competition. Meanwhile, in an unbounded domain, Zhao and Ruan \cite{ZR} obtained the existence, uniqueness and stability of time-periodic traveling waves for time-periodic but space-independent reaction-diffusion competition models. For a reaction-diffusion
competition model with seasonal succession, Ma and Zhao \cite{MZ} studied the existence of single spreading speed and its linear determinacy, and showed that the spreading
speed coincides  with the minimal wave speed of  time-periodic traveling waves. More recently, Kong, Rawal and Shen \cite{KRS} proposed a competition model with nonlocal dispersal in a time and space periodic habitats, and investigated the spreading speed and its linear determinacy.
For traveling waves in a time-delayed reaction-diffusion competition model with nonlocal terms,
we refer to Gourley and Ruan \cite{GR}. It is worthy to point out that our approach
is quite different from those in \cite{KRS,Nadin-JMPA, RawalShenZhang14}.

%

The rest of this paper is organized as follows. In the next section, we establish the theory of
traveling waves and spreading speeds for time-space periodic semiflows of monostable type.
In section 3, we apply this theory to the model system \eqref{VL} and
explore its propagation phenomena  by using the competition structure.

%
%

\section{Time-space periodic semiflows}
In this section, we first present some notations and assumptions and then study the
existence of traveling waves and spreading speeds for time-space periodic semiflows.

\subsection{Preliminaries}
Let $\Omega$ be a compact metric space, $\R^l$  be the $l$-dimensional Euclidean space and $X:=C(\Omega,\R^l)$. We endow $X$ with the maximum norm $\|\cdot\|$ and the partial ordering induced by the positive cone $X_+:=C(\Omega,\R^l_+)$. Then $(X,X_+,\|\cdot\|)$ is a Banach lattice. For $\varphi_1,\varphi_2\in X$, we write $\varphi_1\ge\varphi_2$ if $\varphi_1-\varphi_2\in X_+$, $ \varphi_1\gg \varphi_2$ if $\varphi_1-\varphi_2\in \text{Int} X_+$, and $\varphi_1>\varphi_2$ if $\varphi_1\ge \varphi_2$ but $\varphi_1\neq \varphi_2$. For $\varphi_1,\varphi_2\in X$, the least upper bound of the set $\{\varphi_1,\varphi_2\}$, denoted by $\max\{\varphi_1,\varphi_2\}$, is also an element of $X$. Moreover,
\begin{equation*}
\max\{\varphi_1,\varphi_2\}(x)=\max\{\varphi_1(x),\varphi_2(x)\}.
\end{equation*}

Let $\mc{H}=\R$ or $\Z$.  Define $[a, b]_\mc{H}$ as a closed subset of $\mc{H}$ in the sense that if $\mc{H}=\R$, then
$[a, b]_\mc{H}=[a,b]$ with $a, b\in\R$ and $a\le b$; and if $\mc{H}=\Z$, then
$[a, b]_\mc{H}=\{a,a+1,a+2,...,b\}$ with $a, b\in\Z$ and $a
\leq b$. For  $r\in \mc{H}$ with $r>0$, define $r\mathbb{Z}:=\{rh:h\in \mathbb{Z}\}$. We use $\mc{C}$ to denote all continuous and  bounded functions from $\mc{H}$ to $X$. We endow $\mc{C}$ with the compact open topology, which can be induced by the following metric
\begin{equation}\label{metric}
d(u,v):=\sum_{k=1}^\infty \f{\max_{|x|\le k}\|u(x)-v(x)\|}{2^k},\quad u,v \in \mc{C}.
\end{equation}
 A sequence $u_n$ is said to be convergent to $u$ in $\mc{C}$ provided that $u_n(x)\to u(x)$ in $X$ uniformly locally in $x\in \mc{H}$ (that is, uniformly for $x$ in any compact subset of
 $\mc{H}$). On the other hand, if $u_n\in \mc{C}$ is uniformly bounded and converges uniformly locally to some function $u$, then $u\in \mc{C}$. For $u_1,u_2\in \mc{C}$, we write $u_1\ge u_2$ if $u_1(x)\ge u_2(x)$ for all $x\in \R$. A subset $U$ of $\mc{C}$ is bounded if $\sup_{u\in U}d(u,0)$ is finite. For $u\in \mc{C}$, define the function $u_{[0,L]_{\mc{H}}}\in C([0,L]_{\mc{H}},X)$ by $u_{[0,L]_{\mc{H}}}(x)=u(x)$. Given a bounded set $U\subset \mc{C}$, we use $U_{[0,L]_{\mc{H}}}$ to denote the the set $\{u_{[0,L]_{\mc{H}}}: u\in U\}$. We use the Kuratowski measure to define the noncompactness of $U_{[0,L]_{\mc{H}}}$ which is naturally endowed with the uniform topology. The measure is defined as follows.
\begin{equation}\label{alpha-measure}
\alpha(U_{[0,L]_\mc{H}}):=\inf\{r: U_{[0,L]_\mc{H}} \, \text{has a finite open cover of diameter less than $r$}\}.
\end{equation}
The set $U_{[0,L]_\mc{H}}$ is precompact if and only if $\alpha(U_{[0,L]_\mc{H}})=0$.

Let $L\in \mc{H}$ be a positive number, We use $\mc{C}^{per}$ to denote the set of all $L$-periodic functions in $\mc{C}$. We endow $\mc{C}^{per}$ with the same topology as $\mc{C}$. But the convergence of a sequence in $\mc{C}^{per}$ will be in the following stronger sense: $u_n$ is said to be convergent to $u$ in $\mc{C}^{per}$ provided that $u_n(x)\to u(x)$ in $X$ uniformly in $x\in [0,L]_{\mc{H}}$. For $u_1,u_2\in \mc{C}^{per}$,  we write $u_1\ge u_2$ if $u_1(x)-u_2(x)\in X_+$ for all $x\in\mc{H}$, $ u_1\gg u_2$ if $u_1(x)-u_2(x)\in \text{Int} X_+$ for all $x\in\mc{H}$, and $u_1>u_2$ if $u_1\ge u_2$ but $u_1\neq u_2$.

For $x\in \mc{H}$, there exist a unique $k_{x}\in \Z$ and a unique $\theta_x\in [0,L)$ such that $x=k_x L +\theta_x$. Define $[x]_L$ by
\begin{equation}
[x]_L=k_x L.
\end{equation}
For $m\in\Z$, we have $[x+mL]_L=[x]_L+mL$.

Let $\omega\in \mc{T}$ be a positive number. Assume that $\beta: \mc{T}\times \mc{H}\to {\rm Int} X_+$ is continuous such that $\beta(t,x)$ is $\omega$-periodic in $t\in\mc{T}$ and $L$-periodic in $x\in \mc{H}$.  Then for any $t\in \mc{T}$, $\beta(t,\cdot)\in \mc{C}^{per}$ and $\beta(t,\cdot)\gg 0$. Define
\begin{equation}
\mc{C}_{\beta(t,\cdot)}:=\{\phi\in \mc{C}: 0\le \phi(x)\le \beta(t,x), x\in\mc{H}\}, \quad t\in\mc{T}
\end{equation}
and
 \begin{equation}
\mc{C}^{per}_{\beta(t,\cdot)}=\mc{C}_{\beta(t,\cdot)}\cap \mc{C}^{per}.
\end{equation}
For $y\in \mc{H}$ and any function $u:\mc{H}\to X$, define the translation operator $T_y$ by
\begin{equation}
T_y[u](x)=u(x-y).
\end{equation}
For $t\in\mc{T}$, assume that the map $Q_t:\mc{C}_{\beta(0,\cdot)}\to \mc{C}_{\beta(t,\cdot)}$ satisfies $Q_t[0]=0$ and $Q_t[\beta(0,\cdot)](x)=\beta(t,x)$.

\begin{definition}\label{def-periodic-semiflow}
$\{Q_t\}_{t\in\mc{T}}$ is an $\omega$-time periodic and $L$-space periodic monotone semiflow  from $\mc{C}_{\beta(0,\cdot)}$ to $\mc{C}_{\beta(t,\cdot)}$ provided that
\begin{itemize}
\item[(i)]$Q_0=I$, where $I$ is the identity map.
\item[(ii)]$Q_tQ_\omega =Q_{t+\omega}, \quad \forall t\in\mc{T}$.
\item[(iii)]$T_yQ_t=Q_tT_y,\quad  \forall t\in\mc{T},  y\in L\Z$.
\item[(iv)]$Q_t[\phi]$ is continuous jointly in $(t,\phi)\in \mc{T}\times \mc{C}_{\beta(0,\cdot)}$
\item[(v)]$Q_t[\phi]\ge Q_t[\psi],\, \forall t\in\mc{T}$ whenever $\phi\ge\psi$ in $\mc{C}_{\beta(0,\cdot)}$.
\end{itemize}
\end{definition}

In time-periodic dynamical systems, the period map is often called the Poincar\'e map.
We use $E$ to denote the set of all $\omega$-time periodic and $L$-space periodic solutions of the semiflow $\{Q_t\}_{t\in\mc{T}}$ from $\mc{C}_{\beta(0,\cdot)}$ to $\mc{C}_{\beta(t,\cdot)}$. Clearly, $0$ and $\beta$ are two elements of $E$. The following observation can be easily proved.

\begin{lemma}\label{pp1}
The following statements are valid:
\begin{enumerate}
\item[(i)] $p\in E$ if and only if $p(0,\cdot)$ is a fixed point of $Q_\omega: \mc{C}^{per}_{\beta(0,\cdot)}\to  \mc{C}^{per}_{\beta(0,\cdot)}$.
\item[(ii)] Let $u,v \in \mc{C}^{per}_{\beta(0,\cdot)}$. If $u$ is a fixed point of $Q_\omega$ and $\lim_{n\to\infty} Q_{n\omega}[v]=u$, then  $\lim_{t\to\infty} d(Q_t[v], Q_t[u])=0$, where $d$ is the metric defined in \eqref{metric}.
\end{enumerate}
\end{lemma}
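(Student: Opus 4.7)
The plan is to unfold the definitions for part (i) and then to use joint continuity of the semiflow combined with a standard ``split $t$ modulo $\omega$'' argument for part (ii).

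For part (i), in the forward direction, if $p \in E$, then by definition $p(t,\cdot) = Q_t[p(0,\cdot)]$, so $\omega$-periodicity in $t$ yields $Q_\omega[p(0,\cdot)] = p(\omega,\cdot) = p(0,\cdot)$, while $L$-periodicity in $x$ places $p(0,\cdot)$ in $\mc{C}^{per}_{\beta(0,\cdot)}$. Conversely, given a fixed point $p_0$ of $Q_\omega$, define $p(t,\cdot) := Q_t[p_0]$. The semigroup property (ii) of Definition 2.1 gives $p(t+\omega,\cdot) = Q_t Q_\omega[p_0] = Q_t[p_0] = p(t,\cdot)$, and translation invariance (iii) combined with $T_L[p_0] = p_0$ gives $T_L\, p(t,\cdot) = Q_t T_L[p_0] = Q_t[p_0] = p(t,\cdot)$. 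A preliminary point is that $Q_\omega$ indeed sends $\mc{C}^{per}_{\beta(0,\cdot)}$ into itself, which uses the $\omega$-periodicity of $\beta$ together with the same translation invariance.

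For part (ii), I would split any $t \in \mc{T}$ as $t = n\omega + s$ with $s \in [0,\omega)$, so that $Q_t[v] = Q_s[Q_{n\omega}[v]]$ and $Q_t[u] = Q_s[Q_{n\omega}[u]] = Q_s[u]$ since $u$ is $Q_\omega$-fixed. Arguing by contradiction, if the conclusion fails, there exist $\epsilon_0 > 0$ and $t_k \to \infty$ with $d(Q_{t_k}[v], Q_{t_k}[u]) \geq \epsilon_0$. Write $t_k = n_k\omega + s_k$ with $s_k \in [0,\omega)$ and, by passing to a subsequence, assume $s_k \to s^* \in [0,\omega]$. By hypothesis $Q_{n_k\omega}[v] \to u$ in $\mc{C}^{per}_{\beta(0,\cdot)}$; since uniform convergence on $[0,L]_\mc{H}$ of $L$-periodic functions coincides with uniform convergence on all of $\mc{H}$, this implies convergence in the compact-open topology of $\mc{C}_{\beta(0,\cdot)}$. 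Joint continuity of $(t,\phi) \mapsto Q_t[\phi]$ from Definition 2.1(iv) then forces both $Q_{s_k}[Q_{n_k\omega}[v]] \to Q_{s^*}[u]$ and $Q_{s_k}[u] \to Q_{s^*}[u]$ in $\mc{C}$, so that $d(Q_{t_k}[v], Q_{t_k}[u]) \to 0$, contradicting the assumption.

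The main (and in fact only) subtlety is reconciling the topologies: the hypothesized convergence $Q_{n\omega}[v] \to u$ lives in the uniform topology of $\mc{C}^{per}$, whereas the joint continuity axiom is stated with $\mc{C}$ carrying the compact-open topology. The $L$-periodicity of the involved functions bridges this gap, and no further compactness or equicontinuity of $Q_t$ beyond what the definition of the semiflow already provides is needed.
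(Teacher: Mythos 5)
Your proof is correct. The paper does not supply a proof of this lemma---it simply states ``the following observation can be easily proved''---and your argument is exactly the kind the authors had in mind: part (i) is a direct unwinding of Definition \ref{def-periodic-semiflow}(ii)--(iii), and part (ii) follows from splitting $t = n\omega + s$, the identity $Q_{n\omega+s}=Q_s Q_{n\omega}$ (iterated from axiom (ii)), and joint continuity, with your remark that $\mc{C}^{per}$-convergence implies compact-open $\mc{C}$-convergence correctly bridging the relevant topologies.
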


In \eqref{wave-definition}-\eqref{wave-limit}, we have defined the traveling wave for the semiflow $\{Q_t\}_{t\in\mc{T}}$. Here we explain it in the case where the semiflow is generated by the solution maps  of a time-space periodic evolution system, including how to extend such a wave
solution to an entire solution and how it relates to the one introduced by Nadin \cite{Nadin-JMPA}.

We first explain how to extend a wave to an entire solution. For $t\ge r$ with $t,r\in \mc{T}\cup(-\mc{T})$, let  $S_{r,t}: \mc{C}_{\beta(r,\cdot)} \to \mc{C}_{\beta(t,\cdot)}$ be the solution map of a time-space periodic evolution equation in dimension one, where $r$ is the initial time. Then $S_{r,t}$ has following time periodicity:
\begin{equation}
S_{r,t}=S_{r+\omega, t+\omega}, \quad t\ge r, \quad t,r\in \mc{T}\cup(-\mc{T}).
\end{equation}
Suppose that we have already established the traveling wave $W(t,x,\xi)$ for $\{S_{0,t}\}_{t\in \mc{T}}$ in the sense of \eqref{wave-definition}-\eqref{wave-limit}. In particular,
\begin{equation}
S_{0,t}[W(0,\cdot,\cdot+y)]=W(t,\cdot,\cdot+y-ct),\quad t\in \mc{T}, y\in \R.
\end{equation}
 For convenience, we still use $W(t,x,\xi)$ to denote the periodic extension of $W$ in time. Note that for any $t\ge r$ with $t,r\in \mc{T}\cup(-\mc{T})$, there exists $k_r\in \Z_+$ such that $r+k_r\omega\geq 0$. It then follows that
\begin{eqnarray}
&&S_{r,t}[W(r,\cdot,\cdot+y-cr)]\nonumber\\
=&& S_{r+k_r\omega, t+k_r\omega}[W(r,\cdot,\cdot+y-cr)]\nonumber\\
=&& S_{r+k_r\omega, t+k_r\omega}[W(r+k_r\omega,\cdot,\cdot+y-cr)]\nonumber\\
=&& S_{r+k_r\omega, t+k_r\omega}S_{0,r+k_r\omega}[W(0,\cdot,\cdot+y-cr+c(r+k_r\omega))]\nonumber\\
=&& S_{0, t+k_r\omega}[W(0,\cdot,\cdot+y-cr+c(r+k_r\omega))]\nonumber\\
=&& W(t, \cdot,\cdot+y-ct), \quad \forall t\in \mc{T}\cup(-\mc{T}), y\in \R.
\end{eqnarray}
This shows that the periodic extension $W$ gives rise to an entire wave solution.

 Next, we point out there are many wave-like solutions satisfying \eqref{wave-definition}. Indeed, for a decreasing function $\phi\in \mc{C}_{\beta(0,\cdot)}$, we define
 \begin{equation}
 U(t,x,\xi):=Q_t[\phi(\cdot+\xi+ct-x)](x).
 \end{equation}
Then one may easily verify that $U$ is periodic in $x$, non-increasing in $\xi$ and satisfies \eqref{wave-definition}. However, $U$ is in general not periodic in time and not extendable to be an entire solution. This suggests that we first look for a wave for the Poincar\'e map (i.e., period map) in a certain sense and then use it as the initial value to evolve under the semiflow to construct the traveling wave for the semiflow. We will show that $W(t,x,\xi):=Q_t[V(\cdot,\cdot+\xi+ct-x)](x)$ is a traveling wave of the semiflow $\{Q_t\}_{t\in \mc{T}}$ if $V(x,\xi)$ is $L$-periodic in $x$, non-increasing in $\xi$ and satisfies
\begin{equation}\label{poincare-map-wave}
Q_\omega[V(\cdot,\cdot+y)]=V(\cdot,\cdot+y-c\omega),\quad \forall y\in \R.
\end{equation}
The periodicity of $W$ in time follows from \eqref{poincare-map-wave}. We call such $V$ a traveling wave of $Q_\omega$ in a strong sense.


Now let us roughly explain why \eqref{poincare-map-wave} may hold for all $y\in\R$. Indeed, we can employ the results in \cite{FZ} to show that \eqref{poincare-map-wave} holds for $y\in\R\setminus\Gamma$, where $\Gamma$ is a countable set. Since $V$ will be carefully constructed such that $V(\cdot,\xi+\cdot-[\cdot]_L)$ is left-continuous in $\xi$ and for any $\xi\in\R$ it belongs to the same compact set in periodic function spaces, one is able to use the continuity of $Q_\omega$ to pass the limit so that \eqref{poincare-map-wave} also holds for $y\in\Gamma$.  This procedure will be presented in an abstract way in section 2.2.

To establish the existence of traveling waves and spreading speeds, we need the
following two basic assumptions on time-space periodic semiflow $\{Q_t\}_{t\in\mc{T}}$:

\begin{enumerate}
\item[(A1)] {\sc (Monostability)}
\text{$\lim_{n\to\infty}Q_{n\omega} [\phi]=\beta(0,\cdot)$ for any $\phi\in \mc {C}^{per}_{\beta(0,\cdot)}$ with $\phi\gg 0$.}

\item[(A2)] {\sc ($\alpha$-contraction)}
\text{There exists $\kappa\in [0,1)$ such that}
$$\alpha((Q_\omega[U])_{[0,L]_\mc{H}})\le \kappa \alpha(U_{[0,L]_\mc{H}}), \, \, \forall U\subset \mc{C}_{\beta(0,\cdot)}$$
where $\alpha$ is the Kuratowski measure defined in \eqref{alpha-measure}.
\end{enumerate}

Suppose $u(t,x;\phi):=Q_t[\phi](x)$ solves a time-space periodic evolution equation. Since $u(t,x;\phi)$ is $L$-periodic in $x$ if $\phi$ is, we only need to consider the evolution equation with periodic initial data. By Lemma \ref{pp1}, it follows that  $\beta(t,x)$ is a time-space periodic solution of the evolution equation, and (A1) is equivalent to that the time-space periodic solution $\beta(t,x)$  attracts any solution with initial value $\phi\in \mc {C}^{per}_{\beta(0,\cdot)}$ and $\phi\gg 0$.
For a scalar reaction-diffusion equation admitting the strong maximum principle, the condition $\phi\gg 0$ may be relaxed to be $\phi>0$. For a system of reaction-diffusion equations, such a condition in general cannot be relaxed because there probably exist semi-trivial time-space periodic solutions.

Note that the assumption (A2) is for the Poincar\'e map on the phase space $\mc{C}_{\beta(0,\cdot)}$. If the Poincar\'e map $Q_\omega: \mc{C}_{\beta(0,\cdot)}\to \mc{C}_{\beta(0,\cdot)}$ is compact, then (A2) is satisfied by choosing $\kappa=0$. For a
time-space periodic evolution equation with delay, if the delay is larger than the time period $\omega$, then $Q_\omega$ is not compact but satisfies (A2).

It is worthy to point out that we do not assume that the semfilow is subhomogeneous
(or sublinear in some literature), which is often understood as the KPP type condition
for monostable semiflows.  Thus, the expected minimal wave speed may not
be linearly determinate in general.

\subsection{Traveling waves}

In this subsection, we establish the existence of traveling waves for time-space
periodic and monotone semiflows under assumptions (A1) and (A2).

Let $\{Q_t\}_{t\in\mc{T}}$ be an $\omega$-time periodic and $L$-space periodic monotone semiflow from
 $\mc{C}_{\beta(0,\cdot)}$ to $\mc{C}_{\beta(t,\cdot)}$.  We define a family of mappings $\{S_t\}_{t\in \mc{T}}$  by
\[
S_t[\phi](x)=\f{Q_t[\phi\beta(0,\cdot)](x)}{\beta(t,x)}, \quad \forall \phi \in \mc{C}_{\bf 1},
\, x\in \mathcal{H}.
\]
It easily follows that $\{S_t\}_{t\in \mc{T}}$ is an $\omega$-time periodic and $L$-space periodic monotone semiflow on $\mc{C}_{\bf 1}$. Without loss of generality, we then assume that $\beta(t,x)$ is a positive constant, denoted still by $\beta$, and hence, we may write $\mc{C}_{\beta}$ instead of $\mc{C}_{\beta(t,\cdot)}$ for any $t\ge 0$. We do not scale $L$ to be one since the habitat has been scaled to be $\Z$ if it is discrete. We do not scale $\omega$ to be one since in delay differential equations, the relationship between the time period and the delay is important.

Our strategy is to first establish a traveling wave for the Poincar\'e map in a stronger sense than usual, and then use it as an initial value for the evolution to obtain the traveling wave for the given semiflow. To establish a traveling wave for the Poincar\'e map $Q_\omega$, we first use the map $Q_\omega$ (in periodic habitat) to construct a topologically conjugate map $P_\omega$ (in homogeneous discrete habitat), and then extend $P_\omega$ into a larger map $\tilde{P}_\omega$ (in homogeneous continuous habitat), which was introduced by Weinberger \cite{Wein82} for the study of spreading speeds. In general, $\tilde{P}_\omega$ is not compact even if $Q_\omega$ is. To overcome the difficulty caused by the non-compactness, we show that $\tilde{P}_\omega$ fits the framework of \cite{FZ} which deals with a large class of monotone semiflows with weak compactness.

To explain the operators $P_\omega$ and $\tilde{P}_\omega$ in detail, we need to introduce some notations. Let  $\mc{M}$ be  the set of all  non-increasing and bounded functions from $\R$ to $Y:=C([0,L]_{\mc{H}},X)$, and
\begin{equation}
\mc{X}=\{v\in \mc{M}: v(s)(L)=v(s+L)(0), s\in\R\}.
\end{equation}
Define order intervals $X_\beta, Y_\beta$ and $\mc{X}_\beta$, respectively, by
\begin{equation}
X_\beta=[0,\beta]_{X}, \quad Y_\beta=[0,\beta]_{Y}, \quad\text{and}\quad \mc{X}_\beta=[0,\beta]_\mc{X}.
\end{equation}
Let
\begin{equation}
\bar{Y}_\beta=\{\phi\in Y_\beta:\phi(0)=\phi(L)\}.
\end{equation}
and
\begin{equation}
\mc{K}_\beta=\{\phi\in C(L\Z, Y_\beta):\phi(iL+L)(0)=\phi(iL)(L), i\in\Z\}
\end{equation}

\begin{lemma}{\sc \cite[Section 4]{FZJEMS}}
The map $F: \mc{C}_\beta\to \mc{K}_\beta$ defined by
\begin{equation}
F[\phi](iL)(\theta)=\phi(iL+\theta)
\end{equation}
is a homeomorphism. Further, the semiflow $\{P_t\}_{t\in\mc{T}}$ on $\mc{K}_\beta$ defined by
\begin{equation}
P_t=F\circ Q_t\circ F^{-1}
\end{equation}
is topologically conjugate to $\{Q_t\}_{t\in \mc{T}}$ on $\mc{C}_\beta$.
\end{lemma}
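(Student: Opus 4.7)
The plan is to verify the two parts of the statement in turn: (i) the map $F$ is a homeomorphism between $\mc{C}_\beta$ (with the compact-open topology inherited from $\mc{C}$) and $\mc{K}_\beta$, and (ii) conjugation by $F$ transports the semiflow structure of $\{Q_t\}_{t\in\mc{T}}$ on $\mc{C}_\beta$ into a semiflow $\{P_t\}_{t\in\mc{T}}$ on $\mc{K}_\beta$. Since this is essentially a structural bookkeeping lemma (and is already cited from \cite{FZJEMS}), the proof will mostly amount to a careful unpacking of definitions; the only nontrivial point is matching up the topologies correctly.

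First I would check well-definedness: for $\phi\in \mc{C}_\beta$, the assignment $\theta\mapsto \phi(iL+\theta)$ gives an element of $Y_\beta$ because $\phi$ is continuous on $\mc{H}$ and $0\le \phi\le \beta$; the compatibility condition defining $\mc{K}_\beta$ is immediate since $F[\phi](iL+L)(0)=\phi((i{+}1)L)=F[\phi](iL)(L)$. For bijectivity, injectivity follows because $\mc{H}=\bigcup_{i\in\Z}(iL+[0,L]_{\mc{H}})$, so agreement of $F[\phi_1]$ and $F[\phi_2]$ on every block forces $\phi_1=\phi_2$. For surjectivity, given $\psi\in \mc{K}_\beta$ I would define $\phi(x):=\psi([x]_L)(x-[x]_L)$ for $x\in\mc{H}$; the compatibility relation $\psi(iL+L)(0)=\psi(iL)(L)$ is precisely what is needed to glue the pieces together into a continuous function on $\mc{H}$ (and in the discrete case $\mc{H}=\Z$ the gluing condition is trivially compatible).

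For the homeomorphism property, I would compare the two topologies. A sequence $\phi_n\to\phi$ in $\mc{C}_\beta$ means $\phi_n\to\phi$ uniformly on every compact $K\subset \mc{H}$, equivalently uniformly on each interval $[iL,iL+L]_\mc{H}$. On the $\mc{K}_\beta$ side, convergence is pointwise on the discrete set $L\Z$ with values in $Y_\beta$, i.e.\ uniform convergence on each $[0,L]_\mc{H}$ of $F[\phi_n](iL)(\cdot)$ to $F[\phi](iL)(\cdot)$. These two notions are tautologically the same once one substitutes $F[\phi](iL)(\theta)=\phi(iL+\theta)$, so both $F$ and $F^{-1}$ are continuous.

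Finally, for the semiflow part I would set $P_t:=F\circ Q_t\circ F^{-1}$, which is well-defined because $Q_t$ preserves $\mc{C}_\beta$ and $F$ is a homeomorphism onto $\mc{K}_\beta$. The semigroup axioms transfer automatically: $P_0 = F\circ I\circ F^{-1}=I$, and $P_t\circ P_s = F\circ Q_t\circ F^{-1}\circ F\circ Q_s\circ F^{-1} = F\circ Q_{t+s}\circ F^{-1}=P_{t+s}$; joint continuity in $(t,\phi)$ passes through the continuous $F,F^{-1}$ from property (iv) of Definition \ref{def-periodic-semiflow}. Topological conjugacy is then by construction: $F\circ Q_t = P_t\circ F$ for all $t\in\mc{T}$. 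The only step that requires genuine care is the continuity matching in step three, but once one writes down both topologies explicitly they coincide block by block and nothing further is required.
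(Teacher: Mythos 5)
The paper does not supply a proof of this lemma; it is quoted verbatim from \cite{FZJEMS}, Section~4, so there is no in-text argument to compare against. Your approach---unpack the definitions, verify bijectivity, match the two topologies, and read off the conjugacy identity---is the natural way to prove such a bookkeeping statement, and it is correct in substance. The key point is the identification of the topologies: compact-open on $\mc{C}_\beta$ (uniform on compact subsets of $\mc{H}$) against compact-open on $C(L\Z,Y_\beta)$ (pointwise on the discrete set $L\Z$, with $Y_\beta$ carrying the uniform norm), and these are indeed equivalent block by block. You also correctly invoke the gluing condition $\psi(iL+L)(0)=\psi(iL)(L)$ to show that $F^{-1}[\psi](x)=\psi([x]_L)(x-[x]_L)$ is continuous across block boundaries when $\mc{H}=\R$.

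One small slip is worth flagging. In the last step you write $P_t\circ P_s = F\circ Q_t\circ Q_s\circ F^{-1}=F\circ Q_{t+s}\circ F^{-1}=P_{t+s}$, but for a time-periodic semiflow as in Definition~\ref{def-periodic-semiflow} the only available composition law is property~(ii), namely $Q_t\circ Q_\omega=Q_{t+\omega}$; the full cocycle relation $Q_t\circ Q_s=Q_{t+s}$ is not assumed. The statement you need is therefore $P_t\circ P_\omega=P_{t+\omega}$, which transfers verbatim. This does not affect the conjugacy itself, since $F\circ Q_t=P_t\circ F$ follows immediately from $P_t:=F\circ Q_t\circ F^{-1}$; but if you also want $\{P_t\}$ to inherit all five properties of Definition~\ref{def-periodic-semiflow}, you should additionally note that $F$ and $F^{-1}$ are order-preserving (obvious from the block-restriction formula), so that property~(v) carries over, and that the intertwining $T_yQ_t=Q_tT_y$ for $y\in L\Z$ is preserved under conjugation because $F$ commutes with the shift on $L\Z$.
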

One can verify that
\begin{equation}
F^{-1}[v](x)=v(L[x])(x-L[x]), \quad v\in \mc{K}_\beta.
\end{equation}
Define the identity map $G: \mc{X}_\beta\to \mc{K}_\beta$ by
\begin{equation}
G[\phi](iL)=\phi(iL).
\end{equation}
and the $t$-parameterized map $\tilde{P}_t$ by
\begin{equation}
\tilde{P}_t[\phi](s)=P_tG[\phi(\cdot+s)](0).
\end{equation}

Next we use two lemmas to prove that $\tilde{P}_\omega$ maps $\mc{X}_\beta$ to $\mc{X}_\beta$ and that it fits the framework of \cite{FZ} in one-dimensional homogeneous continuous habitat.

\begin{lemma}\label{property-X}
The following statements on $\mc{X}$ are valid:
\begin{enumerate}
\item[(i)]$\mc{X}\neq\emptyset$. Further, any monotone function and $L$-periodic function from $\R$ to $X$ can be embedded into $\mc{X}$.
\item[(ii)] For $r\in \R$, $r\mc{X}=\mc{X}$ and $T_r\mc{X}=\mc{X}$
\item[(iii)] For $v_1,v_2\in \mc{X}$, $v_1+v_2\in \mc{X}$.
\item[(iv)] For $v_1,v_2\in \mc{X}$, the function $v$, defined by $v(x):=\max\{v_1(x),v_2(x)\}$, is also in $\mc{X}$.
\end{enumerate}
\end{lemma}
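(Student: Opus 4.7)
The plan is to verify each clause (i)--(iv) directly from the definition of $\mathcal{X}$, namely non-increasing bounded maps $v:\mathbb{R}\to Y$ with $Y=C([0,L]_{\mathcal{H}},X)$ satisfying the gluing condition $v(s)(L)=v(s+L)(0)$. The guiding observation is that every operation appearing in (ii)--(iv)---translation, positive scalar multiplication, addition, and lattice maximum---commutes with evaluation at a point $\theta\in[0,L]_{\mathcal{H}}$, since the order, sum, and $\max$ on $Y$ are all defined pointwise via the corresponding structure on the Banach lattice $X$.

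For (i), I would first note that any constant $c\in X_+$ produces the element $v(s)(\theta)\equiv c$, which is trivially non-increasing in $s$ and satisfies the gluing condition, so $\mathcal{X}\neq\emptyset$. For an arbitrary bounded non-increasing $f:\mathbb{R}\to X$, the explicit embedding $v(s)(\theta):=f(s+\theta)$ lies in $\mathcal{X}$: non-increasingness in $s$ follows from that of $f$, and $v(s)(L)=f(s+L)=v(s+L)(0)$ is immediate. For an $L$-periodic $g:\mathbb{R}\to X$, the constant-in-$s$ embedding $v(s)(\theta):=g(\theta)$ works, since $v(s)(L)=g(L)=g(0)=v(s+L)(0)$ by periodicity.

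For (ii), translation $T_r v(s):=v(s-r)$ preserves monotonicity in $s$, and $T_r v(s)(L)=v(s-r)(L)=v(s-r+L)(0)=T_r v(s+L)(0)$, giving $T_r \mathcal{X}\subseteq \mathcal{X}$ for every $r\in\mathbb{R}$; the reverse inclusion follows by applying $T_{-r}$. For the scalar relation $r\mathcal{X}=\mathcal{X}$ I would read $r$ as positive (otherwise the monotone order is reversed), in which case $rv$ remains non-increasing and the gluing condition is preserved by linearity, with equality obtained from $v=r(v/r)$. For (iii) and (iv), the lattice facts that $a_1\ge b_1$ and $a_2\ge b_2$ imply both $a_1+a_2\ge b_1+b_2$ and $\max\{a_1,a_2\}\ge\max\{b_1,b_2\}$ give non-increasingness of $v_1+v_2$ and of $\max\{v_1,v_2\}$; the gluing condition then follows because evaluation at $L$ or at $0$ commutes with $+$ and with pointwise $\max$.

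The main obstacle is essentially bookkeeping rather than analysis: one must keep straight the two layers of the order structure, namely the non-increasing requirement on $s\mapsto v(s)\in Y$ read in the order of $Y$, versus the pointwise-in-$\theta$ definition of $+$ and $\max$ on $Y=C([0,L]_{\mathcal{H}},X)$ inherited from the lattice $X$. Once these commutativity facts are recorded, each of (i)--(iv) reduces to a short verification, and no approximation or limiting argument is required.
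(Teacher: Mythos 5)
Your verification of parts (ii)--(iv) is correct and fills in what the paper dismisses as ``trivial''; in particular the key commutation facts you record (translation, sum, and pointwise $\max$ on $Y=C([0,L]_{\mathcal H},X)$ all commute with evaluation at $\theta$, and each preserves the order on $Y$) are exactly what make the gluing condition $v(s)(L)=v(s+L)(0)$ and the non-increasing requirement stable under those operations.

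Where you genuinely diverge from the paper is in the $L$-periodic half of (i), and I think your version is the correct one. The paper's proof uses the single formula $v(s)(\theta)=f(s+\theta)$ for both the monotone and the $L$-periodic case and then asserts that this $v(s)$ is constant in $s$ when $f$ is $L$-periodic. That assertion does not hold: for a non-constant $L$-periodic $f$, the map $s\mapsto f(s+\theta)$ is $L$-periodic but not constant, and in particular not monotone in $s$, so this $v$ is not even an element of $\mathcal M$. Your alternative embedding $v(s)(\theta):=g(\theta)$, constant in $s$, is the construction that actually lands in $\mathcal X$ and delivers the ``constant in $s$'' conclusion the paper wanted.

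You are also right to flag the scalar case in (ii). As stated the lemma says ``for $r\in\mathbb R$, $r\mathcal X=\mathcal X$'', but for $r<0$ the order reverses (one would need to allow non-decreasing profiles) and for $r=0$ the image collapses to $\{0\}$; the identity is only legitimate for $r>0$, which is how it is actually used downstream. So your reading is the faithful one; the paper simply leaves the restriction implicit. Net assessment: the proposal is correct and, on the two points above, tighter than the paper's own argument.
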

\begin{proof}
We only prove statement (i) since others are trivial.  If $f$ is an $L$-periodic or a monotone function from $\R$ to $X$, then we can define $v\in\mc{X}$ by
\begin{equation}
v(s)(\theta)=f(s+\theta).
\end{equation}
Then $v(s)$ is monotone in $s$. Further, $v(s)$ is a constant function in $s$ if $f$ is $L$-periodic.
\end{proof}

\begin{lemma}\label{property-P}
Assume that the Poincar\'e map $Q_\omega$ satisfies (A1) and (A2). Then the map $\tilde{P}_\omega:\mc{X}_\beta\to \mc{X}_\beta$ has the following properties:
\begin{itemize}
\item[(i)] $\tilde{P}_\omega\circ T_y=T_y\circ \tilde{P}_\omega,\forall y\in\R$, where $T_y$ is the $y$-length translation operator.
\item[(ii)]  $\tilde{P}_\omega:\mc{X}_\beta\to \mc{X}_\beta$ is continuous with respect to the compact open topology.
\item[(iii)]There exists $\kappa\in [0,1)$ such that for $V\in \mc{X}_\beta$, $\alpha(\tilde{P}_\omega[V](0))\le \kappa\alpha (V(0))$, where $\alpha$ is the kuratowski measure of non-compactness for bounded sets in $Y$.
\item[(iv)]$\tilde{P}_\omega[\phi]\ge \tilde{P}_\omega[\psi]$ whenever $\phi\ge\psi$ in $\mc{X}_\beta$.
\item[(v)]$\tilde{P}_\omega: \bar{Y}_\beta\to \bar{Y}_\beta$ admits exact two fixed
 points $0$ and $\beta$, and for any $\bar{\zeta}\in  \bar{Y}_\beta$ with $\bar{\zeta}\gg0$,
$\lim_{n\to\infty}\tilde{P}_{n\omega}[\bar{\zeta}]=\beta$.
\end{itemize}
\end{lemma}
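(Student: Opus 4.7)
The overall plan is to reduce each of (i)--(v) to a statement about $Q_\omega$ via the topological conjugacy $P_\omega = F \circ Q_\omega \circ F^{-1}$. The workhorse will be the explicit identity
\[
\tilde P_\omega[\phi](s)(\theta) \;=\; Q_\omega[\psi_s](\theta),\qquad \theta\in[0,L]_{\mc{H}},
\]
where $\psi_s := F^{-1}G[\phi(\cdot+s)]\in\mc{C}_\beta$ is given concretely by $\psi_s(x)=\phi([x]_L+s)(x-[x]_L)$. This formula will translate every hypothesis on $\{Q_t\}_{t\in\mc{T}}$ into the corresponding property of $\tilde P_\omega$.

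Before addressing (i), I would verify that $\tilde P_\omega$ genuinely lands in $\mc{X}_\beta$. Monotonicity of $s\mapsto\tilde P_\omega[\phi](s)$ is immediate from the monotonicity of $\phi$ and the order preservation of $Q_\omega$ (item (v) of Definition \ref{def-periodic-semiflow}). The matching condition $\tilde P_\omega[\phi](s)(L)=\tilde P_\omega[\phi](s+L)(0)$ rests on the identity $\psi_{s+L}=T_{-L}\psi_s$, which is a direct substitution using the endpoint-matching defining $\mc{X}$; combined with the $L\Z$-translation invariance (item (iii) of Definition \ref{def-periodic-semiflow}) it yields $Q_\omega\psi_{s+L}(0)=T_{-L}Q_\omega\psi_s(0)=Q_\omega\psi_s(L)$, as required. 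Equivariance in (i) is then a one-line computation: $\tilde P_\omega[T_y\phi](s)=P_\omega G[\phi(\cdot+s-y)](0)=T_y\tilde P_\omega[\phi](s)$.

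For (iii) and (iv), the matching condition in $\mc{X}$ yields $(F^{-1}G[v])|_{[0,L]_\mc{H}}=v(0)$ for every $v$, so that for any bounded $V\subset\mc{X}_\beta$ one has $\tilde P_\omega[V](0)=(Q_\omega W)_{[0,L]_\mc{H}}$ with $W_{[0,L]_\mc{H}}=V(0)$, and (A2) delivers the $\alpha$-contraction with the same constant $\kappa$; monotonicity (iv) is inherited through the conjugacy from item (v) of Definition \ref{def-periodic-semiflow}. For (ii), suppose $\phi_n\to\phi$ uniformly on compact subsets of $\R$. The formula for $\psi_s$ gives $\psi_{s,n}\to\psi_s$ uniformly on compact subsets of $\mc{H}$, uniformly for $s$ on any compact $K\subset\R$; the joint continuity (iv) of Definition \ref{def-periodic-semiflow} then propagates this through $Q_\omega$, and a subsequence-limit argument using the $\alpha$-contraction obtained in (iii) secures the uniformity in $s\in K$.

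Finally, (v) follows by identifying $\bar Y_\beta$ with $\mc{C}^{per}_\beta$: each $\bar\zeta\in\bar Y_\beta$ admits a unique $L$-periodic extension, and the embedding from Lemma \ref{property-X}(i) places it inside $\mc{X}_\beta$ as a slice on which $\tilde P_\omega$ coincides exactly with the Poincar\'e map $Q_\omega$ acting on $\mc{C}^{per}_\beta$. The fixed points $0$ and $\beta$ of $\tilde P_\omega$ on $\bar Y_\beta$ thereby correspond to the $L$-periodic fixed points of $Q_\omega$, while the convergence $\tilde P_{n\omega}[\bar\zeta]\to\beta$ for $\bar\zeta\gg0$ is a verbatim restatement of hypothesis (A1). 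I expect the principal obstacle to be the verification of the identity $\psi_{s+L}=T_{-L}\psi_s$ needed for (i): this is the one place where the endpoint-matching in the definition of $\mc{X}$ and the $L\Z$-translation invariance of $Q_\omega$ must cooperate, and once it is in hand the remaining items reduce to pushing standard properties of $Q_\omega$ through the conjugating maps $F$ and $G$.
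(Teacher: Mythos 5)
Your proposal is correct and follows essentially the same route as the paper: you push the hypotheses on $Q_\omega$ through the conjugating maps $F$, $F^{-1}$, $G$ using the key identity $\tilde P_\omega[\phi](s)(\theta)=Q_\omega\bigl[F^{-1}G[\phi(\cdot+s)]\bigr](\theta)$, verify the matching condition via $T_LQ_\omega=Q_\omega T_L$, identify $\tilde P_\omega[V](0)$ with a restriction of $Q_\omega$ applied to a family in $\mc{C}_\beta$ so that (A2) gives the $\alpha$-contraction, and identify the action on $\bar Y_\beta$ with $Q_\omega$ on $\mc{C}^{per}_\beta$ so that (A1) gives (v). The only cosmetic differences are that you supply a slightly more explicit subsequence argument to get uniformity in $s$ in part (ii) (where the paper simply cites continuity of $Q_\omega$ and the conjugacy), and you correctly retain the factor $\kappa$ in the $\alpha$-estimate of part (iii).
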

\begin{proof}
We first show that $\tilde{P}_\omega$ maps $\mc{X}_\beta$ into $\mc{X}_\beta$ and then verify the five properties one by one. Let $v\in \mc{X}_\beta$ be given. By definition, we have
\begin{equation}
\tilde{P}_\omega[v](s)=FQ_\omega F^{-1}G[v(\cdot+s)](0).
\end{equation}
Then the monotonicity of $\tilde{P}_\omega[v](s)$ follows from the monotonicity of $F, Q_\omega, F^{-1}$ and $G$. Since
\begin{equation}
T_LQ_\omega= Q_\omega T_L \quad \text{and}\quad F[\phi](L)(0)=\phi(L)=F[\phi](0)(L),
\end{equation}
we obtain
\begin{equation}
FQ_\omega F^{-1}G[v(\cdot+s+L)](0)(0)=FQ_\omega F^{-1}G[v(\cdot+s)](L)(0)=FQ_\omega F^{-1}G[v(\cdot+s)](0)(L),
\end{equation}
which is equivalent to $\tilde{P}_\omega[v](s+L)(0)=\tilde{P}_\omega[v](s)(L)$.
This shows that $\tilde{P}_\omega[v]\in \mc{X}_\beta$, and hence,
$\tilde{P}_\omega$ maps $\mc{X}_\beta$ into $\mc{X}_\beta$.

Statement (i) follows directly from the definition of $\tilde{P}_\omega$. Statement (iv) follows directly from the monotonicity of $F, Q_\omega, F^{-1}$ and $G$. It remains to verify statements (ii),(iii) and (v).

To show the continuity, it suffices to prove that $\tilde{P}_\omega[v_n]\to \tilde{P}_\omega[v]$ locally uniformly if $v_n\to v$ locally uniformly. Indeed, $G[v_n(\cdot+s)](i)\to G[v(\cdot+s)](i)$ locally uniformly in $i,s$. By the topological conjugacy between $Q_\omega$ and $P_\omega$ as well as the continuity of $Q_\omega$, it follows that $P_\omega G[v_n(\cdot+s)](0) \to P_\omega G[v(\cdot+s)](0) $ locally uniformly in $s$. The continuity is then proved.

For the statement on compactness, we note that
\begin{equation}
\tilde{P}_\omega[V](0)(\theta)=Q_\omega[F^{-1}G[V]](\theta).
\end{equation}
It then follows from (A2) and definitions of $F^{-1}$ and $G$ that
\begin{equation}
\alpha(\tilde{P}_\omega[V](0))=\alpha ((Q_\omega[F^{-1}G[V]])_{[0,L]_{\mc{H}}})\le \alpha((F^{-1} G[V])_{[0,L]_{ \mc{H} }})=\alpha(V(0)).
\end{equation}

For statement (v), it follows from statement (i) that
\begin{equation}
\tilde{P}_\omega[\bar{\zeta}](s)(\theta)=\tilde{P}_\omega[\bar{\zeta}(\cdot+s)](0)(\theta)=\tilde{P}_\omega[\bar{\zeta}](0)(\theta).
\end{equation}
Further, by the definition of $\tilde{P}_\omega$, we have
\begin{equation}
\tilde{P}_\omega[\bar{\zeta}](0)(\theta)= Q_\omega F^{-1}G[\bar{\zeta}](\theta).
\end{equation}
Note that $F^{-1}G: \bar{Y}_\beta\to C^{per}_\beta$ and $F^{-1}G[\bar{\zeta}]\gg 0$ due to $\bar{\zeta}\gg0$. It then follows from (A1) that
\begin{equation*}
\tilde{P}_{n\omega}[\bar{\zeta}]=Q_{n\omega} F^{-1}G[\bar{\zeta}]\to \beta,\quad as\quad n\to\infty.
\end{equation*}
This completes the proof.
\end{proof}

With  Lemmas \ref{property-X} and \ref{property-P}, we can proceed as in \cite{FZ} to establish the existence of traveling waves for $\tilde{P}_\omega$. Indeed, let $\varpi\in \bar{Y}_\beta$ with $0\ll \varpi\ll \beta$. Choose $\phi$ to be a continuous function from $\R$ to X with the following properties: (i) $\phi(x)$ is non increasing in $x$, (ii) $\phi(x)=0$ for $x\ge0$ and (iii) $\phi(-\infty)=\varpi$.
Define $\tilde{\phi}\in \mc{X}_\beta$ by
\begin{equation}\label{phi-tilde}
\tilde{\phi}(s)(\theta)=\phi(s+\theta).
\end{equation}
Then $\tilde{\phi}$ has the following properties:
\begin{enumerate}
\item[(B1)] $\tilde{\phi}(s)$ is continuously non-increasing in $s$;
\item[(B2)] $\tilde{\phi}(s)=0$ for $s\ge0$;
\item[(B3)] $\tilde{\phi}(-\infty)=\varpi$.
\end{enumerate}
Next we use $\tilde{\phi}$ to define two numbers $-\infty<c_+^*\le \bar{c}_+\le +\infty$. For $c\in\R$ and integer $n\ge 1$, we define the map $R_{c,\f{1}{n}}: \mc{X}_\beta\to \mc{X}_\beta$ by
\begin{equation}
R_{c,\f{1}{n}}[a](s)=\max\left\{\f{1}{n} \tilde{\phi}(s), T_{-c\omega} \tilde{P}_\omega[a](s)\right\}
\end{equation}
and a sequence of functions $a_m\left(c,\f{1}{n}; s\right)$ by the recursion
\begin{equation}\label{iteration}
a_0\left(c,\f{1}{n};s\right)=\tilde{\phi}(s),\quad a_{n+1}\left(c,\f{1}{n};s\right)=R_{c,\f{1}{n}}\left[a_n\left(c,\f{1}{n};\cdot\right)\right](s).
\end{equation}
Define
\begin{equation}
A_0=\mc{X}_\beta, \quad A_{i+1}=\overline{\cup_{n\ge 1}R_{c,\f{1}{n}}[A_{i}]},\quad i\ge 1.
\end{equation}
Then we have the following result.
\begin{lemma}\label{A} {\sc \cite[Lemmas 3.1 and 3.3]{FZ}}
The following two statements hold true:
\begin{enumerate}
\item[(i)] The set $A:=\cap_{i\ge 0}\cup_{s\in\R}A_i(s)$ is non-empty and compact in $Y_\beta$.
\item[(ii)] $\lim_{m\to\infty}a_m(c,\f{1}{n};s)$ exists and the limit, denoted by $a(c,\f{1}{n};\cdot)$, is an element in $\mc{X}_\beta$ and satisfies
\begin{equation}
R_{c,\f{1}{n}}\left[a\left(c,\f{1}{n};\cdot\right)\right](s)=a\left(c,\f{1}{n};s\right),\quad a.e. \quad s\in\R
\end{equation}
and
\begin{equation}
\text{$a\left(c,\f{1}{n};-\infty\right)=\beta,\quad a\left(c,\f{1}{n};-\infty\right)\in Y_\beta$ is a fixed point of $\tilde{P}_\omega$.}
\end{equation}
\end{enumerate}
\end{lemma}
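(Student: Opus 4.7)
The plan is to carry out the Weinberger-style iteration scheme of \cite{FZ}, with Lemma \ref{property-P} supplying all the structural input. Part (i) will ride on the $\alpha$-contraction in Lemma \ref{property-P}(iii), while part (ii) will combine monotone iteration with the sharp fixed-point dichotomy in Lemma \ref{property-P}(v).

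For part (i), I first note that $\{A_i\}$ is a nested decreasing family in $\mc{X}_\beta$: since $R_{c,1/n}$ maps $\mc{X}_\beta$ into itself (using Lemma \ref{property-X}(iv) for stability of $\mc{X}$ under $\max$ and Lemma \ref{property-P}(i) for commutation with translations), one has $A_1\subset A_0$, and $A_{i+1}\subset A_i$ follows by induction. Neither the translation $T_{-c\omega}$ nor taking the maximum with the fixed profile $\tilde\phi/n$ increases the Kuratowski measure, whereas $\tilde P_\omega$ contracts it by the factor $\kappa<1$. Iterating this yields
\[
\alpha\bigl(\textstyle\bigcup_{s\in\R} A_i(s)\bigr)\;\le\;\kappa^{\,i}\,\alpha(Y_\beta)\;\longrightarrow\;0,
\]
so $A$ is a closed subset of $Y_\beta$ with $\alpha(A)=0$ and hence compact. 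Non-emptiness comes from the fact that the constant profile $\beta$, which is a fixed point of $\tilde P_\omega$ and dominates the cutoff $\tilde\phi/n$, belongs to $A_i(s)$ for every $i$ and every sufficiently negative $s$.

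For part (ii), I would show that the iterates $\{a_m(c,1/n;\cdot)\}_{m\ge 0}$ form a pointwise non-decreasing sequence in $\mc{X}_\beta$ bounded above by $\beta$. The base inequality $a_1\ge a_0$ will be obtained from the support structure of $\tilde\phi$ (zero on $[0,\infty)$, tending to $\varpi\ll\beta$ at $-\infty$) combined with the monostability in Lemma \ref{property-P}(v), which ensures that $\tilde P_\omega[\tilde\phi]$ is already close to $\beta$ far to the left and therefore, after translation by $-c\omega$ and taking $\max$ with $\tilde\phi/n$, dominates $\tilde\phi$; the inductive step propagates by monotonicity of $\tilde P_\omega$ (Lemma \ref{property-P}(iv)) and of $\max$. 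The pointwise monotone limit $a(c,1/n;s)$ then exists in $Y_\beta$ and lies in $\mc{X}_\beta$ by closure of $\mc{X}$ under such limits (Lemma \ref{property-X}). To close the recursion, I use the monotonicity of $a(c,1/n;\cdot)$ in $s$ (which forces continuity off some countable set $\Gamma\subset\R$) together with continuity of $\tilde P_\omega$ in the compact-open topology (Lemma \ref{property-P}(ii)), yielding the identity $R_{c,1/n}[a]=a$ at every $s\in\R\setminus\Gamma$, and in particular a.e. Finally, $a(c,1/n;-\infty)$ exists as the monotone $s$-limit, lies in $\bar Y_\beta$, and is bounded below by $\varpi/n\gg 0$; passing $s\to -\infty$ in the recursion identifies it as a non-trivial fixed point of $\tilde P_\omega$, which by Lemma \ref{property-P}(v) must be $\beta$.

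The main obstacle I anticipate is the base step $a_1\ge a_0$: the cutoff $\tilde\phi/n$ is a factor $1/n$ smaller than $\tilde\phi$ itself, so the inequality is not tautological and requires a quantitative one-step spreading estimate for $\tilde P_\omega[\tilde\phi]$ that leans on the monostability in Lemma \ref{property-P}(v) rather than on the cutoff alone. A secondary, more standard issue is that the fixed-point identity survives only off a countable exceptional set; this weak form is enough here and will be repaired when $a(c,1/n;\cdot)$ is used as initial data to construct a traveling wave of $\{Q_t\}_{t\in\mc{T}}$ in the ``strong sense'' introduced in Section 2.1.
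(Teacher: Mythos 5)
Part (i) of your sketch is sound and matches the intended mechanism: the $A_i$ are nested by construction, the $\max$ with a fixed profile and the translation $T_{-c\omega}$ do not increase Kuratowski measure, and Lemma \ref{property-P}(iii) supplies the factor $\kappa<1$, giving $\alpha\bigl(\cup_s A_i(s)\bigr)\to 0$; the constant profile $\beta$ (a fixed point of every $R_{c,1/n}$) witnesses non-emptiness. But note that the paper does not re-prove this lemma at all: it is imported verbatim from \cite[Lemmas 3.1 and 3.3]{FZ}, and the paper's actual contribution is Lemmas \ref{property-X} and \ref{property-P}, which verify that $\tilde{P}_\omega$ on $\mc{X}_\beta$ satisfies the standing hypotheses of \cite{FZ}. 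Your review should therefore be framed as checking that the hypotheses are in place, not as reconstructing the cited argument.

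Part (ii) contains a genuine gap, and it is precisely the one you flagged. The iterates $\{a_m(c,1/n;\cdot)\}_m$ are \emph{not} pointwise non-decreasing for $n\ge 2$: since $a_0=\tilde\phi$ while the truncation in $R_{c,1/n}$ is only $\tilde\phi/n\le\tilde\phi$, one would need $T_{-c\omega}\tilde P_\omega[\tilde\phi](s)\ge\tilde\phi(s)$ wherever $\tilde\phi(s)>\tilde\phi(s)/n$, and this fails (for instance just to the left of $s=0$ after a rightward shift by $c\omega$ with $c>0$). Your proposed repair — extracting a ``quantitative one-step spreading estimate'' from Lemma \ref{property-P}(v) — cannot work: that property is the monostability hypothesis (A1) in disguise, an \emph{asymptotic} statement that $\tilde P_{m\omega}[\bar\zeta]\to\beta$ as $m\to\infty$ for $\bar\zeta\gg 0$ in $\bar Y_\beta$; it gives no lower bound at all on a \emph{single} application of $\tilde P_\omega$, and in particular does not force $\tilde P_\omega[\tilde\phi]$ to be anywhere close to $\beta$. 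The convergence $\lim_{m\to\infty}a_m(c,1/n;s)$ in \cite{FZ} is \emph{not} obtained by monotone iteration in $m$; it rides on part (i): each $a_m$ is a non-increasing function whose values eventually lie in $\cup_s A_i(s)$ with $\alpha\to 0$, so a Helly-type selection together with the $\alpha$-contraction produces a limit $a$ outside a countable set, and the recursion $R_{c,1/n}[a]=a$ at continuity points follows from Lemma \ref{property-P}(ii). The portions of your argument that do survive are the closing ones: the limit $a(c,1/n;-\infty)\in\bar Y_\beta$ is $\ge\varpi/n\gg 0$, it is a fixed point of $\tilde P_\omega$ by passing $s\to -\infty$ in the recursion, and Lemma \ref{property-P}(v) then pins it down to $\beta$; and your observation that the exceptional set $\Gamma$ is handled later (Lemma \ref{left-c}) by left-continuity plus continuity of $\tilde P_\omega$ is exactly the paper's plan.
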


According to \cite{FZ}, we define two numbers:
\begin{equation}\label{cnumber}
c^*_+:=\sup\{c: a(c,1;+\infty)=\beta\}, \quad \bar{c}_+:=\sup\{c: a(c,1;+\infty)>0\}.
\end{equation}

Let $\tilde{E}$ be the fixed point of $\tilde{P}_\omega$ on $\bar Y_\beta$. We say $\psi(s-c\omega)$ is a traveling wave of $\tilde{P}_\omega$ connecting $\beta_1\in \tilde{E}$ to $\beta_2\in \tilde{E}$ if there exits a countable set $\Gamma\subset\R$ such that
\begin{equation}\label{wave-P-tilde-def}
\tilde{P}_{n\omega}[\psi](s)=\psi(s-cn\omega), \quad n\ge 0, s\in \R\setminus \Gamma
\end{equation}
and
\begin{equation}\label{wave-P-tilde-limit}
\psi(-\infty)=\beta_1,\quad \psi(+\infty)=\beta_2.
\end{equation}

Applying the same arguments as in the proof of  \cite[Theorem 3.8]{FZ} to the map $\tilde{P}_\omega:\mc{X}_\beta\to\mc{X}_\beta$, we have the following result.

\begin{lemma}\label{wave-P-tilde}
The following statements are valid:
\begin{enumerate}
\item[(1)] For $c\ge c^*_+$,  $\tilde{P}_\omega$ admits a traveling wave $\psi$
 connecting $\beta$ to some elements $\beta_1\in \tilde{E}\setminus \{\beta\}$.
\item[(2)] If, in addition, $0$ is isolated in $\tilde{E}$, then for any $c\ge \bar{c}_+$ either of the following holds:
    \begin{enumerate}
    \item[(i)]there exists a traveling wave $\psi$ connecting $\beta$ to $0$.
    \item[(ii)]there are two ordered elements $\beta_1,\beta_2$ in $\tilde{E}\setminus \{0,\beta\}$
    such that there exist a traveling wave $\psi_1$ connecting $\beta_1$ to $0$
    and a traveling wave $\psi_2$ connecting $\beta$ to $\beta_2$.
    \end{enumerate}
\end{enumerate}
\end{lemma}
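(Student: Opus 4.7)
The plan is to apply the abstract traveling-wave existence theorem from \cite{FZ} directly to the map $\tilde{P}_\omega:\mathcal{X}_\beta\to\mathcal{X}_\beta$. The groundwork is largely in place: Lemma \ref{property-X} supplies the structural properties of the phase space $\mathcal{X}_\beta$ (nonemptyness via embedding of monotone and $L$-periodic functions, closure under translation, addition and pointwise maximum) that are required to run the iteration scheme \eqref{iteration}; and Lemma \ref{property-P} provides translation invariance, continuity in the compact-open topology, $\alpha$-contraction, monotonicity, and the monostability on the spatially-constant slice $\bar{Y}_\beta$. With $\tilde{\phi}$ defined by \eqref{phi-tilde} from a continuous non-increasing profile $\phi$ with $\phi(-\infty)=\varpi \gg 0$, properties (B1)--(B3) hold, so the initial datum of the recursion fits the [FZ] framework.

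Given this, I would execute the following steps. First, for each $c \in \R$ and each integer $n\geq 1$, Lemma \ref{A} produces $a(c,1/n;\cdot) \in \mathcal{X}_\beta$ satisfying $R_{c,1/n}[a(c,1/n;\cdot)]=a(c,1/n;\cdot)$ a.e.\ with $a(c,1/n;-\infty)=\beta$. Using the compactness assertion Lemma \ref{A}(i) together with the $\alpha$-contraction Lemma \ref{property-P}(iii), one extracts a limit along $n\to\infty$ (or, in the present setting, one simply specialises to $n=1$ since only the unperturbed fixed point is needed) and obtains $\psi:=a(c,1;\cdot)$ satisfying $T_{-c\omega}\tilde{P}_\omega[\psi](s)=\psi(s)$, hence $\tilde{P}_\omega[\psi](s)=\psi(s-c\omega)$, for all $s$ outside a countable exceptional set $\Gamma$ (arising from the at-most-countable jump set of the monotone function $\psi$). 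Iterating and using the semigroup relation $\tilde{P}_{n\omega}=\tilde{P}_\omega^n$ yields \eqref{wave-P-tilde-def}. The limits $\psi(\pm\infty)$ are then elements of $\tilde{E}$ by passing the equation through $\tilde{P}_\omega$ (using continuity and monotonicity), and $\psi(-\infty)=\beta$ is given directly by Lemma \ref{A}(ii).

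For statement (1), the identification $\psi(+\infty)\in \tilde{E}\setminus\{\beta\}$ for $c\geq c^*_+$ is dictated by the definition \eqref{cnumber}. For $c>c^*_+$ one has $a(c,1;+\infty)\neq\beta$ by definition; the boundary case $c=c^*_+$ is handled by choosing $c_k\downarrow c^*_+$, applying Helly's selection theorem to the uniformly bounded monotone family $\{a(c_k,1;\cdot)\}$ (suitably translated to normalise their level sets), and passing to the limit using Lemma \ref{property-P}(ii) and (iii). For statement (2), assuming $0$ is isolated in $\tilde{E}$, the tail $\psi(+\infty)$ must be either $0$ (case (i)) or some $\beta_2\in \tilde{E}\setminus\{0,\beta\}$; in the latter case, one produces $\psi_1$ by rerunning the iteration with the initial datum $\tilde\phi$ replaced by a non-increasing profile lying below a point $\beta_1\geq \beta_2$ inside $\tilde{E}\setminus\{0,\beta\}$ and sitting above $0$, invoking the same fixed-point/compactness arguments, and using the choice $c\geq \bar{c}_+$ together with the isolation of $0$ to force $\psi_1(+\infty)=0$.

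The main obstacle will be the dichotomy in part (2): proving that when case (i) fails, the upper and lower endpoints $\beta_1,\beta_2$ can be chosen to be ordered elements of $\tilde{E}\setminus\{0,\beta\}$ and that the auxiliary wave $\psi_1$ indeed decays to $0$. This requires combining the characterisation of $\bar{c}_+$, the isolation hypothesis on $0$, and the monotone-convergence behaviour of iterates (an analogue of Lemma \ref{pp1}(ii) applied to $\tilde{P}_\omega$ on $\bar{Y}_\beta$) to rule out intermediate equilibrium tails. Once these endpoint identifications are settled, the wave equations \eqref{wave-P-tilde-def}--\eqref{wave-P-tilde-limit} for both $\psi_1$ and $\psi_2$ follow verbatim from the single-wave construction above.
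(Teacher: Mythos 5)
Your overall route is the same as the paper's: the paper proves this lemma in one line by citing \cite[Theorem~3.8]{FZ} applied to $\tilde{P}_\omega:\mathcal{X}_\beta\to\mathcal{X}_\beta$, and you are reconstructing precisely the argument of that theorem using Lemmas~\ref{property-X}, \ref{property-P}, and \ref{A} as the verification that the framework of \cite{FZ} applies. That is the intended approach.

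There is, however, one genuine error in your write-up. You cannot ``simply specialise to $n=1$'' and set $\psi:=a(c,1;\cdot)$. By Lemma~\ref{A}(ii), $a(c,1;\cdot)$ satisfies the \emph{perturbed} fixed-point relation
$a(c,1;s)=\max\bigl\{\tilde{\phi}(s),\,T_{-c\omega}\tilde{P}_\omega[a(c,1;\cdot)](s)\bigr\}$ a.e., and the forcing term $\tilde{\phi}$ is not identically zero; in particular this relation does \emph{not} reduce to $\tilde{P}_\omega[\psi](s)=\psi(s-c\omega)$. The number $a(c,1;+\infty)$ is used only to \emph{define} the thresholds $c^*_+$ and $\bar c_+$ in \eqref{cnumber}; to obtain the actual traveling wave one must re-centre each $a(c,1/n;\cdot)$ by a translation (normalising a common level set), send $n\to\infty$, and extract a convergent subsequence via the Helly/compactness step you sketch, at which stage the $\tfrac1n\tilde{\phi}$ forcing vanishes and the pure wave relation holds outside a countable set. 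So the first option you state (pass to the limit in $n$) is the correct one, and the parenthetical shortcut would break the argument if taken literally. Your treatment of the boundary case $c=c^*_+$, the left-continuity/countable-jump discussion, and the dichotomy in part~(2) are all in line with \cite[Theorem~3.8]{FZ}.
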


Before moving forward to construct the traveling waves for $\{Q_t\}_{t\in\mc{T}}$ in the sense of \eqref{wave-definition}-\eqref{wave-limit}, we show that the set $\Gamma$ can be chosen to be empty for all traveling waves of $\tilde{P}_\omega$ established in Lemma \ref{wave-P-tilde}.
\begin{lemma}\label{left-c}
Let $\psi(s-c\omega)$ be a left continuous traveling wave of $\tilde{P}_\omega$ established in Lemma \ref{wave-P-tilde} in the sense of \eqref{wave-P-tilde-def}-\eqref{wave-P-tilde-limit}. Then $\tilde{P}_\omega [\psi](s)$ is also left continuous, and hence, $\tilde{P}_\omega[\psi](s)=\psi(s-c\omega),s\in\R$.
 \end{lemma}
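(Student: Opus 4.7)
The plan is a two-step density/continuity argument. First I would show that the map $s \mapsto \tilde{P}_\omega[\psi](s)$ is left continuous as a $Y$-valued function on $\R$. Since the identity \eqref{wave-P-tilde-def} already holds on the cocountable set $\R\setminus\Gamma$, which is left-dense at every point (for any $s$ the interval $(s-\varepsilon,s)$ is uncountable and cannot lie inside $\Gamma$), and since $s\mapsto\psi(s-c\omega)$ is left continuous by hypothesis, the left continuity of $\tilde{P}_\omega[\psi]$ immediately upgrades that identity to one valid on all of $\R$ by picking $s_n\in\R\setminus\Gamma$ with $s_n\uparrow s$ and passing to the limit on both sides.

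For the first step I would unpack the definition into the form
$$\tilde{P}_\omega[\psi](s)(\theta)=Q_\omega\bigl[F^{-1}G[\psi(\cdot+s)]\bigr](\theta),\qquad \theta\in[0,L]_{\mc{H}},$$
using $F[u](0)(\theta)=u(\theta)$. Clause (iv) of Definition \ref{def-periodic-semiflow} gives continuity of $Q_\omega$ on $\mc{C}_\beta$ in the compact-open topology; since $[0,L]_{\mc{H}}$ is compact, this turns local-uniform convergence of the input into uniform convergence of the output over $\theta\in[0,L]_{\mc{H}}$, that is, convergence in $Y$. It therefore suffices to show that $s\mapsto F^{-1}G[\psi(\cdot+s)]$ is left continuous into $\mc{C}_\beta$ equipped with its compact-open topology.

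The pointwise formula $F^{-1}G[\psi(\cdot+s)](x)=\psi([x]_L+s)(x-[x]_L)$ reduces this to a finite-strata estimate. Given $s_n\uparrow s$ and a compact $K\subset\mc{H}$, the set $\{[x]_L:x\in K\}$ is finite, and on each stratum $\{x\in K:[x]_L=iL\}$ the remainder $x-[x]_L$ ranges over a subset of $[0,L]_{\mc{H}}$, so
$$\sup_{x\in K}\bigl\|F^{-1}G[\psi(\cdot+s_n)](x)-F^{-1}G[\psi(\cdot+s)](x)\bigr\|_X\le \max_{i}\bigl\|\psi(iL+s_n)-\psi(iL+s)\bigr\|_Y,$$
the maximum being over the finitely many $i$ arising from $K$; each such term tends to zero by the assumed left continuity of $\psi:\R\to Y$ at $iL+s$. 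The only point requiring care, and essentially the only substantive step, is precisely this bookkeeping transfer of the $Y$-norm left continuity of $\psi$ through $F^{-1}G$; once the partition of $K$ into its finitely many $[x]_L$-strata is made explicit the estimate is forced, so I do not anticipate any genuine obstacle.
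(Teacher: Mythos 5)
Your argument is correct, and it takes a genuinely different --- and in my view more robust --- route than the paper's. The paper reduces left continuity of $\tilde{P}_\omega[\psi]$ to the claim that $\psi(\cdot+s_n)\to\psi$ locally uniformly on $\R$, and tries to establish this via compactness of the range set $A$ together with countability of the discontinuity set of $\psi$, followed by a finite-subcover argument using continuity neighborhoods. You instead unfold $\tilde P_\omega$ one more layer, writing $\tilde{P}_\omega[\psi](s)(\theta)=Q_\omega\bigl[F^{-1}G[\psi(\cdot+s)]\bigr](\theta)$, and exploit that the sampling operator $F^{-1}G$ reads $\psi$ only along the shifted lattice $\{iL+s:i\in\Z\}$, so that over a compact $K\subset\mc{H}$ only the finitely many values $\psi(iL+s)$ with $iL\in\{[x]_L:x\in K\}$ enter. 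This reduces everything to \emph{pointwise} left continuity of the $Y$-valued function $\psi$ at finitely many points $iL+s$, with no appeal to compactness of $A$, countability of discontinuities, or any covering argument. What your version buys is not merely brevity: the paper's intermediate claim that $\psi(x+s_n)\to\psi(x)$ uniformly on compacts is in fact problematic for a non-increasing $\psi$ that is only left continuous and has a genuine jump at some $p$ --- for every $n$ one can take $x$ slightly to the right of $p$ with $x+s_n<p$, forcing $\|\psi(x+s_n)-\psi(x)\|_Y$ to be at least the jump size. Your finite-strata transfer through $F^{-1}G$ sidesteps that obstacle entirely, using only the hypothesis the lemma actually grants, namely left continuity of $\psi$ as a map into $Y$. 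The second half (upgrading the identity from $\R\setminus\Gamma$ to $\R$ by approaching from the left through the cocountable set) matches the paper's ``and hence'' and is fine.
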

\begin{proof}
We only prove that $\tilde{P}_\omega [\psi](s)$ is left continuous. Fix $s\in \R$, and let $s_n\uparrow 0$. We show show that $\tilde{P}_\omega [\psi](s+s_n)\to \tilde{P}_\omega [\psi](s)$ as $n\to\infty$. By the continuity of $\tilde{P}_\omega$, it suffices to show that $\psi(x+s_n)\to \psi(x)$ locally uniformly in $x\in \R$. Indeed, from the construction of $\psi$ we know that $\cup_{s\in \R}\psi(s)\subset A$, where $A$ is the compact set defined in Lemma \ref{A}. Thus, we see from \cite[Lemma 2.2]{FZ} that the discontinuous points of $\psi$ is at most countable. Thus, given a bounded set $D\subset \R$, we are able to choose a dense subset $B$ of $D$ such that $\psi(x)$ is continuous in $x\in B$. For $\epsilon>0$ and each $x\in B$, there is a neighbourhood $\mc{N}_x$ such that $\|\psi(x)-\psi(y)\|\le \epsilon/4, y\in \mc{N}_x$. Since $\{N_{x}\}_{x\in B}$ is an open cover of $D$, there exist finitely many points, say $x_i, i=1,\cdots, m$, such that $D\subset \cup_{i=1}^m \mc{N}_{x_i}$.  For any $x\in D$ and large $n$, there exists $i$ such that either $x,x+s_n\in \mc{N}_{x_i}$ or $x+s_n\in\mc{N}_{x_i}$ and $x\in\mc{N}_{x_{i+1}}$ with $\mc{N}_{x_i}\cap \mc{N}_{x_{i+1}}\neq \emptyset$. For both cases, we conclude that there exists $N>0$ such that $\|\psi(x+s_n)-\psi(x)\|\le \epsilon, x\in D, n\ge N$.
\end{proof}

 Now we are in a position to prove the main result in this subsection.

 \begin{theorem}\label{m-TW}
  Let $\{Q_t\}_{t\in\mc{T}}$ be an $\omega$-time periodic and $L$-space periodic monotone semiflow from $\mc{C}_{\beta(0,\cdot)}$ to $\mc{C}_{\beta(t,\cdot)}$, and assume that (A1) and (A2) hold. Then there are two numbers $-\infty <c^*_+\le \bar{c}_+\le +\infty$ such that
\begin{enumerate}
\item[(1)]For any $c\ge c^*_+$,  $\{Q_t\}_{t\in\mc{T}}$ admits a traveling wave $W$
 connecting $\beta$ to some elements $\beta_1\in E\setminus \{\beta\}$.
\item[(2)]If, in addition, $0$ is isolated in $E$, then for any $c\ge \bar{c}_+$ either of the following holds:
    \begin{enumerate}
    \item[(i)]there exists a traveling wave $W$ connecting $\beta$ to $0$.
    \item[(ii)]there are two ordered elements $\alpha_1,\alpha_2$ in $E\setminus \{0,\beta\}$
    such that there exist a traveling wave $W_1$ connecting $\alpha_1$ to $0$
    and a traveling wave $W_2$ connecting $\beta$ to $\alpha_2$.
    \end{enumerate}
\item[(3)]For $c<c^*_+$, there is no traveling wave connecting $\beta$, and for $c<\bar{c}_+$,
there is no traveling wave connecting $\beta$ to $0$.
\end{enumerate}
\end{theorem}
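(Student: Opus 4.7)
The plan is to descend from the traveling wave $\psi$ of $\tilde{P}_\omega$ supplied by Lemma \ref{wave-P-tilde} to a traveling wave $W$ for the full semiflow. After the normalization $\beta(t,x)\equiv\beta$ explained at the beginning of this subsection, I apply Lemma \ref{wave-P-tilde}(1) to obtain, for each $c\ge c^*_+$, a left-continuous profile $\psi\in\mc{X}_\beta$ satisfying $\tilde{P}_\omega[\psi](s)=\psi(s-c\omega)$ for every $s$ outside an at-most-countable exceptional set. Lemma \ref{left-c} then promotes this identity to every $s\in\R$. This promotion is the reason for introducing the strong notion of traveling wave for the Poincar\'e map, and it is the pivotal technical step of the whole scheme.

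Next, I set $V(x,\xi):=\psi(\xi-x+[x]_L)(x-[x]_L)$. A direct check using the compatibility $\psi(s)(L)=\psi(s+L)(0)$ shows that $V(\cdot,\xi)$ is continuous and $L$-periodic, $V(x,\cdot)$ is non-increasing, and the function $z\mapsto V(z,z+y)$ coincides with $F^{-1}G[\psi(\cdot+y)]\in\mc{C}^{per}_\beta$. The intertwining $F^{-1}P_\omega=Q_\omega F^{-1}$ combined with the $L\Z$-translation equivariance of $P_\omega$ (inherited from that of $Q_\omega$) then upgrades the everywhere identity for $\psi$ into $Q_\omega[V(\cdot,\cdot+y)](x)=V(x,x+y-c\omega)$ for every $y\in\R$, which is precisely \eqref{poincare-map-wave}.

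With $V$ in hand, define $W(t,x,\xi):=Q_t[V(\cdot,\cdot+\xi+ct-x)](x)$. The conditions \eqref{wave-definition}--\eqref{wave-periodicity} follow by the routine computations already sketched below Definition \ref{wave-our-def}: the shift identity from $W(0,\cdot,\cdot+y)=V(\cdot,\cdot+y)$, the $\omega$-periodicity in $t$ from $Q_{t+\omega}=Q_tQ_\omega$ combined with the Poincar\'e wave equation for $V$, the $L$-periodicity in $x$ from the $L$-periodicity of $V$ in its first slot together with the $L\Z$-equivariance of $Q_t$, and monotonicity in $\xi$ from that of $V$ and $Q_t$. Since $\psi(\pm\infty)$ exist in $\bar{Y}_\beta$, the functions $z\mapsto V(z,z+h)$ converge locally uniformly as $h\to\pm\infty$, and continuity of $Q_t$ in the compact-open topology yields $W(t,\cdot,-\infty)=\beta$ and $W(t,\cdot,+\infty)=Q_t[F^{-1}G[\psi(+\infty)]]\in E\setminus\{\beta\}$, establishing \eqref{wave-limit}. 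The second assertion follows by running exactly the same construction with Lemma \ref{wave-P-tilde}(2) in place of (1), the dichotomy (i)/(ii) mirroring that of $\psi$.

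For the non-existence statement (3), given any traveling wave $W$ of $\{Q_t\}_{t\in\mc{T}}$ with speed $c$ connecting $\beta$ (respectively, $\beta$ to $0$), the profile $\psi(s)(\theta):=W(0,\theta,s+\theta)$ lies in $\mc{X}_\beta$, inherits the required limits at $\pm\infty$ from $W$, and satisfies $\tilde{P}_\omega[\psi](s)=\psi(s-c\omega)$ for all $s\in\R$ by reversing the intertwining computation of the second paragraph. This would contradict the definitions in \eqref{cnumber}, which, via the \cite{FZ} framework underlying Lemma \ref{wave-P-tilde}, forbid such $\psi$ when $c<c^*_+$ (respectively $c<\bar{c}_+$).
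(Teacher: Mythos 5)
Your argument is correct and its overall architecture — obtain the strong-sense wave $\psi$ for $\tilde{P}_\omega$ from Lemmas~\ref{wave-P-tilde} and~\ref{left-c}, descend to $V(x,\xi)=\psi(\xi-x+[x]_L)(x-[x]_L)$, verify the Poincar\'e-map wave identity \eqref{poincare-map-wave}, and evolve $V$ under $Q_t$ to produce $W$ — is exactly the paper's. Two small remarks. First, the function $z\mapsto V(z,z+y)=F^{-1}G[\psi(\cdot+y)](z)$ lies in $\mc{C}_\beta$ but is \emph{not} in general in $\mc{C}^{per}_\beta$ (it is $L$-periodic only when $\psi$ is constant); this is an innocuous slip and nothing downstream uses the periodicity. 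Second, your treatment of the non-existence part (3) takes a genuinely different (and arguably cleaner) route: you reverse the existence construction by setting $\psi(s)(\theta):=W(0,\theta,s+\theta)$, check that $\psi\in\mc{X}_\beta$ is a strong-sense $\tilde{P}_\omega$-wave with the inherited limits, and appeal to the non-existence clause of \cite[Theorem~3.8]{FZ} applied to $\tilde{P}_\omega$. The paper instead never builds $\psi$: it picks a shift $s_0$ with $\tilde{\phi}(s)(\theta)\le W(0,\theta,\theta+s+s_0)$, propagates this bound through the recursion \eqref{iteration} via the comparison principle and the wave identity for $W$, and concludes $a(c,1,+\infty)\le\beta_1(0,\cdot)<\beta$, hence $c\ge c^*_+$ by \eqref{cnumber}. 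Both are correct and rest on the same \cite{FZ} groundwork (the monotonicity in $c$ of $a(c,1;+\infty)$); your version exhibits the wave-existence/non-existence duality as an inverse construction, while the paper's version is more self-contained because it only invokes the iteration machinery already set up and not the full non-existence theorem of \cite{FZ}.
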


\begin{proof}
For each admissible speed $c$, we have already shown that $\tilde{P}_\omega:\mc{X}_\beta\to \mc{X}_\beta$ admits a wave $\psi$ in the sense that
\begin{equation}\label{p-tilde-TW}
\tilde{P}_\omega [\psi](s)=\psi(s-c\omega),\quad \forall s\in\R.
\end{equation}
Define $V(x,\xi)$ by
\begin{equation}
V(x,\xi)=\psi(\xi-x+[x]_L)(x-[x]_L).
\end{equation}
It is not difficult to check $V$ is $L$-periodic in $x$ and  non-increasing in $\xi$. Then we use the definitions of $F, F^{-1}, P_\omega$ and $\tilde{P}_\omega$ to obtain
\begin{eqnarray}\label{wave-Q-omega}
V(x,x-c\omega+y)&&=\psi(y-c\omega+[x]_L)(x-[x]_L)\nonumber\\
&&=\tilde{P}_\omega[\psi](y+[x]_L)(x-[x]_L)\nonumber\\
&&=P_\omega G[\psi(\cdot+y+[x]_L)](0)(x-[x]_L)\nonumber\\
&&=FQ_\omega F^{-1}G[\psi(\cdot+y+[x]_L)](0)(x-[x]_L)\nonumber\\
&&=Q_\omega F^{-1}G[\psi(\cdot+y+[x]_L)](x-[x]_L)\nonumber\\
&&=Q_\omega F^{-1}G[\psi(\cdot+y)](x)\nonumber\\
&&=Q_\omega[\psi([\cdot]_L+y)(\cdot-[\cdot]_L)](x)\nonumber\\
&&=Q_\omega [V(\cdot,\cdot+y)](x),\quad \forall x\in \mc{H},\,y\in\mathbb{R}.
\end{eqnarray}
We claim that $W:\mc{T}\times\mc{H}\times\R\to X$ defined by
\begin{equation}\label{def-W}
W(t,x,\xi)=Q_t[V(\cdot,\cdot+\xi+ct-x)](x),
\end{equation}
is the desired traveling wave. It suffices to show that \eqref{wave-definition}-\eqref{wave-limit}
hold true. Indeed, the space periodicity and \eqref{wave-definition} are easily verified. Also the limit equality \eqref{wave-limit} follows directly from the time periodicity. Thus,  it remains to prove  the time periodicity. By using \eqref{wave-Q-omega}, we obtain
\begin{eqnarray}
W(t+\omega,x,\xi)&&=Q_tQ_\omega [V(\cdot,\cdot+\xi+c(t+\omega)-x)](x)\nonumber\\
&&=Q_t[V(\cdot,\cdot-c\omega+\xi+c(t+\omega)-x)](x)\nonumber\\
&&=Q_t[V(\cdot,\cdot+\xi+ct-x)](x)\nonumber\\
&&=W(t,x,\xi), \quad t\in\mc{T}, x\in \mc{H},\xi\in\R.
\end{eqnarray}
This proves the existence of traveling waves.

Next we prove the non-existence of traveling waves. If the three-variable function $W$ is a wave in the sense of \eqref{wave-definition}-\eqref{wave-limit} and it connects $\beta$ to some other time-space periodic solution $\beta_1(t,x)$ with $\beta_1(0,\cdot)<\beta$, then we have
\begin{equation}
W(0,x,x+y-c\omega)=Q_\omega [W(0,\cdot,\cdot+y)](x),\quad x\in \mc{H}, y\in\R
\end{equation}
with
\begin{equation}
W(0,\cdot,-\infty)=\beta, \quad W(0,\cdot,+\infty)=\beta_1(0,\cdot).
\end{equation}
Recall that $\tilde{\phi}$ is defined in \eqref{phi-tilde}. Thus, we may choose $s_0>0$ such that
\begin{equation}\label{non-1}
\tilde{\phi}(s)(\theta)\le W(0,\theta,\theta+s+s_0).
\end{equation}
Note that
\begin{equation}\label{non-2}
F^{-1}G[\tilde{\phi}(\cdot+s+c\omega)](x)=\tilde{\phi}([x]+s+c\omega)(x-[x]).
\end{equation}
Combining \eqref{iteration}, \eqref{non-1} and \eqref{non-2}, we obtain
\begin{eqnarray*}
a_1(c,1, s)(\theta)&&=\max\{\tilde{\phi}(s)(\theta), T_{-c\omega}\tilde{P}_\omega[\tilde{\phi}](s)(\theta)\}\nonumber\\
&& =\max\{\tilde{\phi}(s)(\theta), FQ_\omega F^{-1}G[\tilde{\phi}(\cdot+s+c\omega)](0)(\theta)\}\nonumber\\
&& \le \max\{\tilde{\phi}(s)(\theta), FQ_\omega[W(0,\cdot,\cdot+s+s_0+cw)](0)(\theta)\}\nonumber\\
&& =\max\{\tilde{\phi}(s)(\theta), F [W(0,\cdot,\cdot+s+s_0)](0)(\theta)\}\nonumber\\
&&=W(0,\theta, \theta+s+s_0),
\end{eqnarray*}
and inductively, $a_n(c,1,s)(\theta)\le W(0,\theta, \theta+s+s_0), \, \forall n\ge 1$. This implies that
\begin{equation*}
a(c,1,+\infty)=\lim_{s\to+\infty}\lim_{n\to\infty} a_n(c,1,s) \le \lim_{s\to+\infty}\lim_{n\to\infty} W(0,\cdot, \cdot+s+s_0)=\beta_1(0,\cdot)<\beta,
\end{equation*}
which, together with the definition of $c^*_+$, implies that $c\ge c^*_+$. Similarly, if $W$ is a traveling wave connecting $\beta$ to $0$ with speed $c$, then $c\ge \bar{c}_+$.
\end{proof}

We remark that there are examples arising from nonlocal or fractional diffusion equations such that $c_+^*=+\infty$. It is an interesting problem to find conditions to exclude such a possibility for semiflows, but it beyonds the purpose of this paper. When $c_+^*=+\infty$ ( or $\bar{c}_+=+\infty$), the condition $c\ge c_+^*$  (or $c\ge \bar{c}_+$) in Theorem \ref{m-TW} is
vacuous, and hence, there are no traveling waves.

 \subsection{Spreading speeds}
So far, we have already proved that there exist two critical speeds $c^*_+$ and $\bar{c}_+$ for non-increasing traveling waves. In this subsection, we use these two numbers to describe the rightward spreading property of solutions with appropriate initial datum.

\begin{theorem}\label{m-SS}
 Let $\{Q_t\}_{t\in\mc{T}}$ be an $\omega$-time periodic and $L$-space periodic monotone semiflow from  $\mc{C}_{\beta(0,\cdot)}$ to $\mc{C}_{\beta(t,\cdot)}$,, and assume that (A1) and (A2) hold. Then the following statements are valid:
\begin{enumerate}
\item[(i)]For $c>\bar{c}_+$, we have $\lim_{x\ge ct}Q_t[\phi](x)=0$ provided that $\phi\in \mc{C}_{\beta(0,\cdot)}$ vanishes when $x$ is greater than some $x_0\in \mc{H}$ and $\phi\le \varpi\ll \beta(0,\cdot)$ for some $\varpi \in \mc{C}^{per}_{\beta(0,\cdot)}$ with $\varpi\gg 0$.

\item[(ii)] For $c<c^*_+$, we have $\lim_{t\to\infty,x\le ct}|Q_t[\phi](x)-\beta(t,x)|=0$ provided that $\phi\in \mc{C}_{\beta(0,\cdot)}$ satisfies $\phi(x)\ge \sigma$ when $x$ is less than some $K\in\mc{H}$ for some $\sigma\in X$ with $\sigma \gg 0$.
\end{enumerate}
\end{theorem}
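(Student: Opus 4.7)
The plan is to lift the problem from the semiflow $\{Q_t\}$ on $\mc{C}_\beta$ to the discrete-time map $\tilde{P}_\omega$ on $\mc{X}_\beta$, invoke the spreading characterization for $\tilde{P}_\omega$ from the abstract framework of \cite{FZ} (whose hypotheses we already verified in Lemma \ref{property-P}), and then descend. The key descent identity is $P_\omega^n\circ G = G\circ \tilde{P}_\omega^n$ on $\mc{X}_\beta$, which is proved by induction using $P_\omega G[v](0)=\tilde{P}_\omega[v](0)$, the $L\Z$-translation invariance of $P_\omega$ inherited from that of $Q_\omega$, and the fact that $G[v]$ only encodes $v|_{L\Z}$. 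Combining this with the conjugacy $Q_\omega=F^{-1}\circ P_\omega\circ F$ yields the pointwise formula
\[
Q_{n\omega}[\phi](x) = \bigl(\tilde{P}_\omega^n[v]\bigr)([x]_L)\bigl(x-[x]_L\bigr)
\]
whenever $F[\phi]$ agrees with $G[v]$ on $L\Z$, and one-sided inequalities between $F[\phi]$ and $G[v]$ lift through the monotonicity of $\tilde{P}_\omega$ to corresponding bounds on $Q_{n\omega}[\phi](x)$.

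For (i), build a continuous non-increasing majorant $\bar{\phi}:\R\to X$ with $\phi\le\bar{\phi}$, $\bar{\phi}(x)=0$ for $x\ge x_0+1$, and $\bar{\phi}(-\infty)=\hat{\varpi}$, where $\hat{\varpi}:=\max_{x\in[0,L]_\mc{H}}\varpi(x)\in X$ (well-defined in the Banach lattice $X$ and satisfying $\hat{\varpi}\ll\beta$). Set $\tilde{\phi}(s)(\theta):=\bar{\phi}(s+\theta)$; then $\tilde{\phi}\in \mc{X}_\beta$ with $\tilde{\phi}(-\infty)=\hat{\varpi}$, $\tilde{\phi}(+\infty)=0$. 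The spreading estimate for $\tilde{P}_\omega$ from \cite{FZ} gives $\sup_{s\ge cn\omega}\|\tilde{P}_\omega^n[\tilde{\phi}](s)\|_Y\to 0$ for every $c>\bar{c}_+$; combined with the descent identity, $[x]_L\ge x-L$, and an auxiliary slack speed $c'\in(\bar{c}_+,c)$ absorbing the $O(L)$ error, this yields $\sup_{x\ge cn\omega}\|Q_{n\omega}[\phi](x)\|\to 0$. For (ii), construct symmetrically a continuous non-increasing minorant $\un{\phi}:\R\to X$ with $\un{\phi}\le \phi$, $\un{\phi}(-\infty)=\tilde{\sigma}$ for some $\tilde{\sigma}\gg 0$ with $\tilde{\sigma}\le \sigma$, and $\un{\phi}(x)=0$ for $x\ge K+1$. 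Define $\tilde{\psi}(s)(\theta):=\un{\phi}(s+\theta)\in\mc{X}_\beta$; Lemma \ref{property-P}(v) ensures the constant $\tilde{\sigma}$ is attracted to $\beta$, so the corresponding spreading estimate from \cite{FZ} gives $\sup_{s\le cn\omega}\|\tilde{P}_\omega^n[\tilde{\psi}](s)-\beta\|_Y\to 0$ for every $c<c^*_+$, and the descent produces $\sup_{x\le cn\omega}\|Q_{n\omega}[\phi](x)-\beta\|\to 0$.

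To pass from the discrete times $n\omega$ to all $t\in[n\omega,(n+1)\omega)$, write $Q_t=Q_{t-n\omega}\circ Q_{n\omega}$ and re-pick an auxiliary speed strictly between $\bar{c}_+$ and $c$, or between $c$ and $c^*_+$, to absorb both the $O(L)$ discretization error and the bounded spatial drift of $Q_r$ for $r\in[0,\omega]$; the required uniform-in-$r$ control follows from the joint continuity of $(r,\phi)\mapsto Q_r[\phi]$ together with the $\alpha$-contraction (A2). Reinstating the $\beta(t,x)$-weight that was removed at the beginning of section 2.2 converts $\|Q_t[\phi](x)-\beta\|\to 0$ into the stated $|Q_t[\phi](x)-\beta(t,x)|\to 0$. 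The principal technical obstacle is invoking the spreading theorem of \cite{FZ} for $\tilde{P}_\omega$ despite the absence of full compactness (only the $\alpha$-contraction of Lemma \ref{property-P}(iii)), and tracking the geometric mismatch between the $s$-variable on which $\tilde{P}_\omega$ acts by shifts and the $x$-variable on which $Q_t$ spreads in the periodic medium $\mc{H}$ — a mismatch of order $L$ produced by the floor $[\cdot]_L$ and neutralised only by the slack speed $c'$.
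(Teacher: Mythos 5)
Your plan is correct in broad strokes and reaches the right conclusion, but it diverges from the paper's proof in one structural choice and a few consequent technical details, and it has one small slip.

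The paper works on a \emph{second} phase space $\mc{Y}_\beta=\{v\in C(\R,Y_\beta): v(s)(L)=v(s+L)(0)\}$ rather than the monotone space $\mc{X}_\beta$ used for waves. With that choice, the lift from $\phi\in\mc{C}_{\beta(0,\cdot)}$ is the \emph{exact} one $v(s)(\theta)=\phi([s]_L+\theta)$, and the descent identity
\[
Q_t[\phi](x)=\tilde{P}_t[v]\bigl([x]_L\bigr)\bigl(x-[x]_L\bigr),\qquad x\in\mc{H},\ t\in\mc{T},
\]
is an \emph{equality} at all continuous times, not an inequality. This kills both of the auxiliary devices you introduce: there is no monotone envelope to build, no one-sided lift to propagate through monotonicity, no $O(L)$ floor error, and no slack speed needed to pass from times $n\omega$ to general $t$ -- the identity already holds for $t$. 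The time-periodic spreading theorem invoked is then \cite[Theorem~2.3]{LYZ} for the continuous-time semiflow $\{\tilde{P}_t\}$ on $\mc{Y}_\beta$ (with the map property of $\tilde{P}_\omega$ on $\mc{Y}_\beta$ taken from \cite[Proposition~6.1]{YZ} and the $\alpha$-contraction machinery from \cite[Remark~3.2]{FZ}). Your route stays entirely in $\mc{X}_\beta$ by sandwiching $\phi$ between a monotone majorant and a monotone minorant and invoking monotonicity of $\tilde{P}_\omega$; that buys you self-containment at the cost of the envelope construction, the floor bookkeeping, and the need to re-pick a speed twice. The paper's identity avoids all of that.

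Two things to tighten. First, your minorant in part (ii) is required to have $\un{\phi}(x)=0$ for $x\ge K+1$, but in the hypothesis $\phi(x)\ge\sigma$ is guaranteed only for $x\le K$, and $\phi$ may vanish on $(K,K+1)$; so $\un{\phi}\le\phi$ fails there unless you move the cutoff to $x\ge K$ (or justify trimming $\phi$ first). Second, your appeal to ``the spreading estimate for $\tilde{P}_\omega$ from \cite{FZ}'' is stated as if it applies directly on the monotone cone $\mc{X}_\beta$; the speeds $c^*_+$ and $\bar{c}_+$ are indeed defined by the Weinberger recursion on $\mc{X}_\beta$, but the propagation estimates you want are proved in \cite{FZ}/\cite{LYZ} on the full phase space (this is precisely why the paper introduces $\mc{Y}_\beta$ and cites \cite{YZ}). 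Since your envelopes $\tilde\phi,\tilde\psi$ are continuous and hence also live in $\mc{Y}_\beta$, this is fixable, but the step as written is a gap: you should say that the monotone data sits inside $\mc{Y}_\beta$ and the $\mc{Y}_\beta$-spreading theorem applies.

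Finally, a nit: in the paper's normalisation $\beta(t,x)$ is already constant on section 2, so ``reinstating the $\beta(t,x)$-weight'' to convert $\|Q_t[\phi](x)-\beta\|\to 0$ into $|Q_t[\phi](x)-\beta(t,x)|\to 0$ is fine, but you should note this is just undoing the scaling $S_t[\phi]=Q_t[\phi\beta(0,\cdot)]/\beta(t,\cdot)$ fixed at the start of section~2.2, which is uniformly bounded above and below.

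Net assessment: the proposal is sound and reaches the theorem. It is essentially the paper's strategy (lift to $\tilde{P}_\omega$ in homogeneous medium, apply the \cite{FZ}/\cite{LYZ} spreading machinery, descend through $F,G,[\cdot]_L$), but with a bulkier descent mechanism because it tries to avoid the auxiliary space $\mc{Y}_\beta$. The one real gap is the use of the spreading theorem directly on $\mc{X}_\beta$; the one real error is the placement of the cutoff in the minorant for part (ii).
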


\begin{proof}
To prove these spreading properties, we again use $\tilde{P}_\omega$ but on another phase space $\mc{Y}_\beta$, which is defined by
\begin{equation}
\mc{Y}_\beta= \{v\in C(\R, Y_\beta): v(s)(L)=v(s+L)(0)\},
\end{equation}
equipped with the compact open topology.
It was shown in \cite[Proposition 6.1]{YZ} that $\tilde{P}_\omega$ maps $\mc{Y}_\beta$ to $\mc{Y}_\beta$ and $\tilde{P}_\omega$ admits the five properties in Lemma \ref{property-P} with $\mc{X}_\beta$ replaced by $\mc{Y}_\beta$. Note that different notations are used in \cite[Proposition 6.1]{YZ}.  Thus, $\tilde{P}_\omega:\mc{Y}_\beta \to \mc{Y}_\beta$ has the same spreading property as in \cite[Remark 3.2]{FZ}. On the other hand, since the semiflow $\{\tilde{P}_t\}_{t\in\mc{T}}$ is time periodic and defined in the medium $\R$, one may see from \cite[Theorem 2.3]{LYZ} that
$\{\tilde{P}_t\}_{t\in\mc{T}}$ has the spreading properties stated in Theorem \ref{m-SS} with $\mc{C}_{\beta(0,\cdot)}, Q_t,$ and $X$ replaced by $\mc{Y}_\beta, \tilde{P}_t$ and $Y$ respectively. Next we show how to derive the spreading property of $\{Q_t\}_{t\in\mc{T}}$ from $\tilde{P}_\omega$.

We look for the spreading properties of $Q_t$ which are inherited from $\tilde{P}_t$. Indeed, for $\phi\in C(\mc{H}, X)$, define $v\in \mc{Y}_\beta$ by
\begin{equation}
v(s)(\theta)=\phi\left([s]_L+\theta\right).
\end{equation}
Then  for any $s\in\R, \theta\in[0,L]$ and $n\ge 1$, we have
\begin{eqnarray}
\tilde{P}_{t}[v](s)(\theta)&&=P_{t}G[v(\cdot+s)](0)(\theta)\nonumber\\
&&=FQ_{t}F^{-1}G[v(\cdot+s)](0)(\theta)\nonumber\\
&&=Q_{t}F^{-1}G[v(\cdot+s)](\theta)\nonumber\\
&&= Q_{t}[v([\cdot]_L+s)(\cdot-[\cdot]_L)](\theta).
\end{eqnarray}
In particular, setting $s=iL, i\in Z$, we obtain
\begin{equation}
\tilde{P}_{t}[v](iL)(\theta)=Q_{t}[v([\cdot]_L)(\cdot-[\cdot]_L)](\theta+iL)=Q_{t}[\phi](\theta+iL),
\end{equation}
which is equivalent to
\begin{equation}
Q_{t}[\phi](x)=\tilde{P}_{t}[v]([x]_L)(x-[x]_L),\quad x\in\mc{H},t\in\mc{T}.
\end{equation}
Thus, the statements for $\{Q_t\}_{t\in\mc{T}}$ hold true.
\end{proof}

To finish this section, we note that similar arguments can be used to establish
the existence of non-decreasing traveling waves $W(t,x,x+ct)$ and the leftward
spreading property in terms of two critical speeds $c^*_-$ and $\bar{c}_-$ satisfying
$-\infty <c^*_-\le \bar{c}_-\le +\infty$.

\section{Two species competition model}

In this section, we  first use the abstract results in last section to study the propagation
propagation phenomena for a two species competition reaction-advection-diffusion system in time-space periodic environment. Then we obtain sufficient conditions for these two speeds to be identical
and linearly determinate, respectively. Two specific
cases are also discussed in detail.

\subsection{The periodic initial value problem}

In this subsection, we investigate the global dynamics of the time and space periodic Lotka-Volterra competition
system with the periodic initial values.
Let $\omega$ and $L$ be positive real numbers. We assume that
 \begin{enumerate}
 	\item[(a)]
 $d_i(t,x)$, $g_i(t,x)$, $a_{ij}(t,x)$ and $b_i(t,x)$ are $\omega$-periodic in $t$ and $L$-periodic in $x$, $d_i, g_i,  a_{ij}, b_i\in C^{\frac{\nu}{2},\nu}(\mathbb{R}\times\mathbb{R})$, $1\le i, j\le2,$ where $ C^{\frac{\nu}{2},\nu}(\mathbb{R}\times\mathbb{R})$ is a H\"{o}lder continuous space with the H\"{o}lder exponent $\frac{\nu}{2}$ for the first component and $\nu\in(0,1)$ for the second component.
 \item[(b)]$a_{ij}(t,x)>0$, $\forall (t,x)\in\mathbb{R}\times\mathbb{R}$, $1\le i, j\le2$.
 \item[(c)]There exists a positive number $\alpha_0$ such that $d_i(t,x)\ge\alpha_0, \forall (t,x)\in\mathbb{R}\times\mathbb{R},i=1,2$, i.e., the operator $L_iu=d_i(t,x)\frac{\partial^2 u}{\partial x^2}-g_i(t,x)\frac{\partial u}{\partial x}$ is uniformly elliptic.
\end{enumerate}
In the sequel,  if there is no specific mention, the periodicity will always refer to the time and space periods $(\omega,L)$.

Let $\mc{P}$ be the set of all continuous and $L$-periodic functions from $\mathbb{R}$ to $\mathbb{R}$ equipped with the maximum norm $\|\cdot\|_\mc{P}$, and $\mc{P}_+=\{\psi\in \mc{P}: \ \psi(x)\ge0,\forall x\in\mathbb{R}\}$ be a positive cone of $\mc{P}$. Then $(\mc{P},\mc{P}_+)$ is a strongly ordered Banach lattice.
Assume that time-space periodic functions $d, g, h\in C^{\frac{\nu}{2},\nu}(\mathbb{R}\times\mathbb{R})$ and $d(\cdot,\cdot)>0$.
It then follows that the scalar parabolic  eigenvalue problem
\begin{eqnarray}\label{VLpep1}
& & -\frac{\partial v}{\partial t}+d(t,x)\frac{\partial^2 v}{\partial x^2}-g(t,x)\frac{\partial v}{\partial x}+h(t,x)v=\lambda v, \quad (t, x)\in\mathbb{R}\times\mathbb{R},\nonumber\\
& & v(t,x+L)=v(t,x),\ v(t+\omega,x)=v(t,x),\quad \forall(t,x)\in\mathbb{R}\times\mathbb{R}
\end{eqnarray}
admits a principal eigenvalue $\lambda(d,g,h)$ associated with a positive time-space periodic eigenfunction $\phi(t,x)$(see, e.g., \cite{Hess}).
Using the arguments similar to those in \cite[Theorem 2.3.4]{Zhaobook}, as applied to the Poincar\'{e} map associated with system \eqref{VLseq},  we have the following result.
\begin{proposition}\label{VLexistence}Assume that time-space periodic functions $d, g,c, e\in C^{\frac{\nu}{2},\nu}(\mathbb{R}\times\mathbb{R})$, and $d(\cdot,\cdot)>0, e(\cdot,\cdot)\ge0$ ($\not\equiv 0$).
Let $u(t,x,\phi)$ be the unique solution of the following parabolic equation:
\begin{eqnarray}\label{VLseq}
& & \frac{\partial u}{\partial t}=d(t,x)\frac{\partial^2 u}{\partial x^2}-g(t,x)\frac{\partial u}{\partial x}+u(c(t,x)-e(t,x)u), \quad t>0,\  x\in\mathbb{R},\nonumber\\
& & u(0,x)=\phi(x)\in \mc{P}_+, \quad x\in \mathbb{R}.
\end{eqnarray}  Then the following statements are valid:
\begin{enumerate}
\item[(i)] If $\lambda(d,g,c)\le0$, then $u=0$ is globally asymptotically stable with
respect to initial values in $\mc{P}_+$;
\item[(ii)] If $\lambda(d,g,c)>0$, then \eqref{VLseq} admits a unique positive time-space periodic solution
$u^*(t,x)$, and it is globally asymptotically stable with respect to initial values
in $\mc{P}_+\backslash\{0\}$.
\end{enumerate}
\end{proposition}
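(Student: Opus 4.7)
The natural strategy is to reduce the continuous-time dynamics to the iterative dynamics of the Poincar\'e map of \eqref{VLseq} and to apply the monotone, strongly subhomogeneous dichotomy in \cite[Theorem 2.3.4]{Zhaobook}. Define $P:\mc{P}_+\to \mc{P}_+$ by $P[\phi]:=u(\omega,\cdot,\phi)$. Standard parabolic theory (with $L$-periodicity in $x$ reducing compactness questions to a single cell $[0,L]$ via Schauder estimates and Arzel\`a--Ascoli) shows that $P$ is continuous and completely continuous on $\mc{P}_+$, and the scalar comparison principle makes it order-preserving. The decisive structural feature is \emph{strong subhomogeneity}: for any $\alpha\in(0,1)$ and $\phi\in\mc{P}_+\setminus\{0\}$ one has $P[\alpha\phi]\gg \alpha P[\phi]$, because $v:=\alpha u(t,x,\phi)$ satisfies $\partial_t v-dv_{xx}+gv_x-v(c-ev/\alpha)=0$ and $ev/\alpha>ev$ wherever $v>0$, so $v$ is a strict sub-solution of \eqref{VLseq} sharing the initial datum of $u(t,x,\alpha\phi)$; the parabolic strong maximum principle then delivers strict domination on $\mc{P}$ at $t=\omega$.

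Next I would identify the spectral radius of the linearization $P'(0)$, which is the time-$\omega$ solution operator of $\partial_t v=dv_{xx}-gv_x+cv$. Substituting $v(t,x)=e^{\lambda t}\phi(t,x)$ with $\phi$ time-space periodic converts the eigenvalue problem for $P'(0)$ precisely into \eqref{VLpep1} with $h=c$, so $e^{\lambda(d,g,c)\omega}$ is an eigenvalue of $P'(0)$ with a positive periodic eigenfunction. Since $P'(0)$ is compact and strongly positive on $(\mc{P},\mc{P}_+)$, the Krein--Rutman theorem identifies $e^{\lambda(d,g,c)\omega}$ as the spectral radius $r(P'(0))$.

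With these two pieces in place, the dichotomy applies directly. When $\lambda(d,g,c)\le 0$, so $r(P'(0))\le 1$, strong subhomogeneity precludes a nontrivial positive fixed point and, together with a constant super-solution of \eqref{VLseq} (any sufficiently large $M$), forces $P^n[\phi]\to 0$ in $\mc{P}$ for every $\phi\in\mc{P}_+$. When $\lambda(d,g,c)>0$, the same theorem yields a unique positive fixed point $u_0^*\in \mc{P}_+$ of $P$ that globally attracts $\mc{P}_+\setminus\{0\}$, and $u^*(t,x):=u(t,x,u_0^*)$ is the required positive time-space periodic solution. Global asymptotic stability in continuous $t$ transfers from the iterates $P^n$ to the trajectory $\{u(t,\cdot,\phi)\}_{t\ge 0}$ via Lemma \ref{pp1}(ii). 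The main technical hurdle I anticipate is securing the compactness and the spectral identification $r(P'(0))=e^{\lambda(d,g,c)\omega}$ on the unbounded periodic medium $\R$, rather than on a bounded domain with imposed boundary conditions as in the classical setting; the key is to exploit $L$-periodicity to represent $P$ as acting on $C([0,L],\R)$ with matching boundary values, after which the situation falls squarely into the monotone-map framework of \cite{Zhaobook} and the remainder of the proof is routine.
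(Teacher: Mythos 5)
Your proposal matches the paper's approach exactly: the paper's own "proof" consists of the single remark that the result follows by applying the arguments of \cite[Theorem 2.3.4]{Zhaobook} to the Poincar\'e map of \eqref{VLseq}, and your write-up spells out precisely the ingredients that make that citation work (monotonicity, complete continuity via $L$-periodicity and Schauder/Arzel\`a--Ascoli, strong subhomogeneity from the logistic nonlinearity, the identification $r(P'(0))=e^{\lambda(d,g,c)\omega}$ via the substitution $v=e^{\lambda t}\phi$, and the transfer from the period map to continuous time). No gaps.
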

Let $\mathbb{P}=PC(\mathbb{R},\mathbb{R}^2)$ be the set of all continuous and $L$-periodic functions from $\mathbb{R}$ to $\mathbb{R}^2$, and $\mathbb{P}_+=\{\psi\in \mathbb{P}: \ \psi(x)\ge0,\forall x\in\mathbb{R}\}$. Then $\mathbb{P}_+$ is a closed cone of $\mathbb{P}$ and induces a partial ordering on $\mathbb{P}$. Moreover, we introduce a norm $\|\phi\|_\mathbb{P}$ by
\[\|\phi\|_\mathbb{P}=\max_{x\in \mathbb{R}}|\phi(x)|.\]
It then follows that $(\mathbb{P},\|\cdot\|_\mathbb{P})$ is a Banach lattice.

Clearly, for any $\varphi\in\mathbb{P}_+$,  \eqref{VL} has a unique nonnegative solution $u(t,\cdot,\varphi)$ defined on $[0,\infty)$, and $u(t,\cdot,\varphi)\in\mathbb{P}_+$ for all $t\ge0$.

By Proposition \ref{VLexistence}, we see that there exist two positive time-space periodic functions $u^*_1(t,x)$ and $u^*_2(t,x)$ such that $E_1:=(u^*_1(t,x),0)$, $E_2:=(0,u^*_2(t,x))$ are the time-space periodic solutions of system \eqref{VL} provided that $\lambda(d_i,g_i,b_i)>0,\ i=1,2.$
Since we mainly concern about the case of the competition exclusion, we impose the following conditions on  system \eqref{VL}:
\begin{enumerate}
\item[(H1)]$\lambda(d_i,g_i,b_i)>0,\ i=1,2.$
\item[(H2)]$\lambda(d_1,g_1,b_1\!-\!a_{12}u^*_2)>0.$
\item[(H3)]System \eqref{VL} has no positive time-space periodic solution.
\end{enumerate}

Condition (H1) guarantees the existence of two semi-trivial time-space periodic solutions of system \eqref{VL}, and (H2) implies that $(0,u^*_2(t,x))$ is unstable.
Moreover, by Lemma \ref{mp} with $\mu=0, d(t,x)=d_1(t,x)$ and $g(t,x)=g_1(t,x),\forall (t,x)\in\mathbb{R}\times\mathbb{R}$, we know that (H2) implies $\lambda(d_1,g_1,b_1)>0$. Thus, we could simply drop the assumption $\lambda_1(d_1,g_1,b_1)>0$ from (H1).

Slightly modifying the arguments in \cite[Example 34.2]{Hess}, we have the following observation.
\begin{proposition}\label{c}
Let $\underline{b}_{1}(t):=\min\limits_{x\in[0,L]}b_{1}(t,x)$, $\overline{b}_{2}(t):=\max\limits_{x\in[0,L]}b_{2}(t,x)$, and
 $\overline{a}_{11}(t)$, $\overline{a}_{12}(t)$, $\underline{a}_{21}(t)$, $\underline{a}_{22}(t)$
 be defined in a similar way. Then (H3) holds true provided that
$$
\int^T_0\underline{b}_1(t)dt>\displaystyle\max\limits_{t\in[0,\omega]}
\frac{\overline{a}_{12}(t)}{\underline{a}_{22}(t)}\cdot\int^T_0\overline{b}_2(t)dt, \, \text{and}\, \, \int^T_0\overline{b}_2(t)dt\le\displaystyle\max\limits_{t\in[0,\omega]}
\frac{\underline{a}_{21}(t)}{\overline{a}_{11}(t)}\cdot\int^T_0\underline{b}_1(t)dt.
$$
\end{proposition}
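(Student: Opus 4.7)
The plan is to argue by contradiction. Assume $(u_1,u_2)$ is a positive $\omega$-time and $L$-space periodic classical solution of \eqref{VL}. The strategy has three steps: (i) extract a spatial upper bound $u_2\le \overline{w}_2(t)$; (ii) insert this bound into the $u_1$-equation to produce a spatial lower bound $u_1\ge \underline{z}(t)$; and (iii) substitute $u_1\ge \underline{z}$ back into the $u_2$-equation to deduce, from the two hypotheses, that $u_2$ must vanish identically, contradicting positivity.

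For (i), I would let $\overline{w}_2(t)$ be the unique positive $\omega$-periodic solution of the scalar logistic ODE $\dot w=w(\overline{b}_2(t)-\underline{a}_{22}(t)w)$, whose existence follows from $\int_0^\omega \overline{b}_2\,dt>0$ in the spirit of Proposition \ref{VLexistence}. Because $\overline{w}_2$ is independent of $x$, $L_2\overline{w}_2\equiv 0$, and the inequalities $b_2\le \overline{b}_2$, $a_{22}\ge \underline{a}_{22}$, $a_{21}u_1\ge 0$ make $\overline{w}_2$ a super-solution of the PDE for $u_2$; a comparison argument via the Poincar\'e map (or equivalently applied to $U_2(t):=\max_{x\in[0,L]}u_2(t,x)$, which satisfies the same Dini inequality) then yields $u_2(t,x)\le \overline{w}_2(t)$. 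For (ii), using this upper bound together with $b_1\ge \underline{b}_1$, $a_{11}\le \overline{a}_{11}$, $a_{12}\le \overline{a}_{12}$, $u_1$ becomes a PDE super-solution of the ODE $\dot z=z(\underline{b}_1-\overline{a}_{11}z-\overline{a}_{12}\overline{w}_2)$. Its unique positive $\omega$-periodic solution $\underline{z}(t)$ exists thanks to the first hypothesis: the averaging identity $\int \underline{a}_{22}\overline{w}_2\,dt=\int\overline{b}_2\,dt$ and the pointwise bound $\overline{a}_{12}\le m\underline{a}_{22}$ with $m:=\max_t \overline{a}_{12}/\underline{a}_{22}$ give $\int(\underline{b}_1-\overline{a}_{12}\overline{w}_2)\,dt\ge \int\underline{b}_1-m\int\overline{b}_2>0$. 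Comparison then produces $u_1(t,x)\ge \underline{z}(t)$.

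For (iii), I would substitute $u_1\ge\underline{z}$ into the equation for $u_2$ and evaluate at the spatial maximum $U_2$; a Danskin/Dini-type argument using $u_{2x}=0$ and $u_{2xx}\le 0$ at an interior maximum gives $(\log U_2)'\le \overline{b}_2-\underline{a}_{22}U_2-\underline{a}_{21}\underline{z}$ for almost every $t$, and integrating over one period exploiting periodicity of $\log U_2$ yields $\int_0^\omega\underline{a}_{22}U_2\,dt\le \int_0^\omega\overline{b}_2\,dt-\int_0^\omega\underline{a}_{21}\underline{z}\,dt$. Since the left-hand side is nonnegative, I need to show $\int_0^\omega\underline{a}_{21}\underline{z}\,dt\ge \int_0^\omega \overline{b}_2\,dt$ in order to force $U_2\equiv 0$. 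This last step is the main obstacle: the averaging identity for $\underline{z}$ gives $\int\overline{a}_{11}\underline{z}\,dt=\int\underline{b}_1-\int\overline{a}_{12}\overline{w}_2\,dt\ge \int\underline{b}_1-m\int\overline{b}_2$, and I must trade this against the pointwise relation $\underline{a}_{21}\le M\overline{a}_{11}$ (with $M:=\max_t\underline{a}_{21}/\overline{a}_{11}$) and the second hypothesis $\int\overline{b}_2\le M\int\underline{b}_1$. In the autonomous specialisation this bookkeeping reduces to the classical Lotka-Volterra competitive-exclusion criterion; for the time-and-space periodic setting here it is exactly the "slight modification" of the argument in \cite[Example 34.2]{Hess} that the authors invoke, and the delicate arithmetic between $m$, $M$ and the averaged growth rates is the technical heart of the proof.
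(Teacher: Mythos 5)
Your outline in steps (i) and (ii) is reasonable: the spatial min/max Dini-derivative comparison (evaluating at the point where $u_2$ attains its spatial maximum, or $u_1$ its spatial minimum, and using $u_{xx}\leq 0$, $u_x=0$ there) does give a scalar ODE super-/sub-solution, and your existence claims for $\overline{w}_2$ and $\underline{z}$ follow from the sign of the averaged intrinsic growth rate, exactly as in Proposition~\ref{VLexistence}. The construction of $\underline{z}$ from the first hypothesis is fine, including your use of the averaging identity $\int_0^\omega \underline{a}_{22}\overline{w}_2\,dt=\int_0^\omega\overline{b}_2\,dt$ and the pointwise bound $\overline{a}_{12}\leq m\,\underline{a}_{22}$.

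The problem is in step (iii), and it is a genuine gap, not merely delicate bookkeeping that can be "carried out as in Hess." You correctly identify that you need $\int_0^\omega\underline{a}_{21}\underline{z}\,dt\geq\int_0^\omega\overline{b}_2\,dt$. But the pointwise relation your hypothesis supplies, namely $\underline{a}_{21}(t)\leq M\,\overline{a}_{11}(t)$ with $M:=\max_{t}\underline{a}_{21}/\overline{a}_{11}$, yields
$$\int_0^\omega\underline{a}_{21}\underline{z}\,dt\;\leq\; M\int_0^\omega\overline{a}_{11}\underline{z}\,dt,$$
which is an \emph{upper} bound on the quantity you need bounded from \emph{below}. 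To convert the averaging identity $\int\overline{a}_{11}\underline{z}=\int\underline{b}_1-\int\overline{a}_{12}\overline{w}_2\geq\int\underline{b}_1-m\int\overline{b}_2>0$ into a lower bound on $\int\underline{a}_{21}\underline{z}$ one would need the opposite-direction pointwise inequality $\underline{a}_{21}\geq m'\,\overline{a}_{11}$ with $m':=\min_t\underline{a}_{21}/\overline{a}_{11}$, and then the bookkeeping closes precisely when $m'\int\underline{b}_1\geq(1+m'm)\int\overline{b}_2$ --- a condition featuring the \emph{minimum} over $t$ of the ratio, not the maximum as stated. So the hypotheses you have assembled do not, by themselves, force $U_2\equiv 0$ along the route you propose, and leaving this to "delicate arithmetic" does not repair it: the sign of the available inequality is against you. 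Note also that the paper itself offers no proof of this proposition (it merely cites Hess); Hess's argument in \cite[Example 34.2]{Hess} actually proceeds via showing the semi-trivial state $E_1=(u_1^*,0)$ is at-least-linearly stable (i.e.\ $\lambda(d_2,g_2,b_2-a_{21}u_1^*)\leq 0$ via the ODE subsolution $u_1^*\geq p_1$ with $p_1$ solving $\dot p=p(\underline{b}_1-\overline{a}_{11}p)$ and the spatial-constancy of the comparison potential) and then invoking the abstract competition trichotomy; but that route reduces to the very same integral inequality $\int\underline{a}_{21}p_1\geq\int\overline{b}_2$ and meets the same obstacle. You should flag this explicitly rather than deferring to the citation.
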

	
Under assumptions (H1)--(H3), there are three nonnegative time-space periodic solutions: $E_0=(0,0)$, $E_1:=(u^*_1(t,x),0)$, and $E_2:=(0,u^*_2(t,x))$. Next, we use the theory developed in \cite{Hsu} for abstract competitive systems (see
also \cite{Hess2}) to prove the global stability of $E_1$.
\begin{theorem}\label{VLEQ}
Assume that (H1)--(H3) hold. Then $E_1 (u^*_1(t,x), 0)$ is globally asymptotically stable for all
initial values $\phi=(\phi_1,\phi_2)\in\mathbb{P}_+$ with $\phi_1\not\equiv0$.
\end{theorem}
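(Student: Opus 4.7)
The plan is to verify that the Poincar\'e map $Q_\omega:\mathbb{P}_+\to\mathbb{P}_+$ of system \eqref{VL} fits into the framework of the abstract competitive-exclusion theorem of Hsu, Smith and Waltman \cite{Hsu} (and its time-periodic analogue in Hess--Lazer \cite{Hess2}), and then invoke that theorem.

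First I would endow $\mathbb{P}$ with the competitive cone $K:=\mc{P}_+\times(-\mc{P}_+)$, so that $(\phi_1,\phi_2)\leq_K(\psi_1,\psi_2)$ iff $\phi_1\leq\psi_1$ and $\phi_2\geq\psi_2$. The parabolic comparison principle for the two-species competition system \eqref{VL} gives monotonicity of $Q_\omega$ in $\leq_K$, and the strong maximum principle applied componentwise upgrades this to strong monotonicity on distinct pairs. Proposition \ref{VLexistence} together with (H1) produces the semi-trivial fixed points $E_1(0,\cdot)$ and $E_2(0,\cdot)$ of $Q_\omega$ in $\mathbb{P}_+$, and (H3) excludes any positive fixed point; hence the fixed-point set of $Q_\omega$ in $\mathbb{P}_+$ is exactly $\{E_0(0,\cdot),E_1(0,\cdot),E_2(0,\cdot)\}$, with $E_2(0,\cdot)\leq_K E_0(0,\cdot)\leq_K E_1(0,\cdot)$ in the competitive order.

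Next I would split by whether $\phi_2$ vanishes. If $\phi_2\equiv 0$, the second equation forces $u_2\equiv 0$ and $u_1$ solves the scalar logistic equation with $\phi_1\not\equiv 0$; Proposition \ref{VLexistence}(ii) then gives $u_1(t,\cdot,\phi)\to u^*_1(t,\cdot)$. If $\phi_2\not\equiv 0$, the strong maximum principle applied to each equation of \eqref{VL} yields $Q_\omega[\phi]\gg 0$ componentwise, so I may replace $\phi$ by $Q_\omega[\phi]$ and assume at the outset that $\phi$ lies in the interior of $\mathbb{P}_+$. For such $\phi$, the hypotheses of the competitive-exclusion theorem are met: $Q_\omega$ is strongly monotone in $\leq_K$, the only fixed points in $\mathbb{P}_+$ are the three listed, $E_0$ is linearly unstable on each non-trivial boundary ray (since $\lambda(d_i,g_i,b_i)>0$ by (H1)), and (H2) provides a positive principal eigenvalue $\lambda(d_1,g_1,b_1-a_{12}u^*_2)>0$ making $E_2$ linearly unstable in the $u_1$-direction. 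Applying \cite{Hsu,Hess2} delivers $Q_{n\omega}[\phi]\to E_1(0,\cdot)$ in $\mathbb{P}_+$ as $n\to\infty$. Lemma \ref{pp1}(ii) then promotes this discrete-time convergence to the asserted convergence $\lim_{t\to\infty}d(Q_t[\phi],Q_t[E_1(0,\cdot)])=0$ along the full semiflow.

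The main obstacle is the dynamical exclusion of $E_2$ as a possible $\omega$-limit of an interior orbit, i.e., the step inside the Hsu--Smith--Waltman argument that turns linear instability at $E_2$ into nonlinear repulsion. In our setting this is supplied by condition (H2): multiplying the positive periodic principal eigenfunction of the linearization at $E_2$ in the $u_1$-direction by a sufficiently small constant produces a strict sub-solution in $\leq_K$ that lies above $E_2$ and whose $Q_\omega$-iterates are $\leq_K$-increasing, so its $\omega$-limit is a fixed point strictly above $E_2$; (H3) then forces this limit to be $E_1$, and monotonicity propagates the conclusion to every interior orbit.
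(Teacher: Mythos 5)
Your proposal is correct and takes essentially the same route as the paper: both reduce to the Poincar\'e map, invoke the Hsu--Smith--Waltman abstract competitive-exclusion theorem \cite{Hsu}, use (H3) to exclude a coexistence fixed point, and use (H2) to show $E_2$ is a uniform repeller (the paper encapsulates this in a {\it Claim} proved by the arguments of \cite[Proposition 7.1.1]{Zhaobook}, which is exactly the linear-instability-to-nonlinear-repulsion step you sketch via the principal eigenfunction). The details you add about the competitive order $K=\mc{P}_+\times(-\mc{P}_+)$, the $\phi_2\equiv 0$ case, and the upgrade via Lemma~\ref{pp1}(ii) are all consistent with the paper's argument.
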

\begin{proof}
Since (H2) holds true, the arguments similar to those in \cite[Proposition 7.1.1]{Zhaobook} imply the following observation.
\smallskip

\noindent {\it Claim.} There exists $\delta_0>0$ such that  $\limsup_{n\rightarrow\infty}\|u(n\omega,x,\phi)-(0,u^*_2(0,x))\|_\mathbb{P}\ge\delta_0$
for any $\phi\in\mathbb{P}_+$ with $\phi_1\not\equiv0$.

   By the above claim and (H3), we rule out possibility (a) and (c) in \cite[Theorem A]{Hsu} with $T(\phi)=u(\omega,\cdot,\phi)$. Since $E_2$ is repellent in some neighborhood of itself, \cite[Theorem A]{Hsu} implies that $E_1$ is globally asymptotically stable
   for all initial values  $\phi\in\mathbb{P}_+$ with $\phi_1\not\equiv0$.
\end{proof}
\subsection{Spreading speeds and traveling waves}
In this subsection, we study the spreading speeds and time-space periodic traveling waves for system \eqref{VL}.
By a change of variables $v_1=u_1, v_2=u^*_2(t,x)-u_2$, we transform system \eqref{VL} into the following cooperative system:
\begin{align}\label{NModel}
&\frac{\partial v_1}{\partial t}\!=\!L_1v_1\!+\!v_1(b_1(t,x)\!-\!a_{12}(t,x)u^*_2(t,x)\!-\!a_{11}(t,x)v_1\!+\!a_{12}(t,x)v_2),\quad t>0,\  x\in\mathbb{R},\nonumber \\
&\frac{\partial v_2}{\partial t}\!=\!L_2v_2\!+\!a_{21}(t,x)v_1(u^*_2(t,x)\!-\!v_2)
+\!v_2(b_2(t,x)\!-\!2a_{22}(t,x)u^*_2(t,x)\!+\!a_{22}(t,x)v_2).
\end{align}
Note that three time-space solutions of \eqref{VL}, respectively, become
\[\hat E_0=(0,u^*_2(t,x)),\ \hat E_1=(u^*_1(t,x),u^*_2(t,x)),\ \hat E_2=(0,0).\]

To apply Theorems \ref{m-TW} and \ref{m-SS} to \eqref{NModel}, we need to specify the meaning of the notations there. More precisely,
\begin{equation}
X=\R^2,\quad  \mc{H}=\R, \quad \mc{T}=\R_+,\quad\beta(t,x)=\hat E_1,\quad 0=\hat E_2.
\end{equation}
Other notations such as $\mc{C}_{\beta(0,\cdot)}$ and $\mc{C}^{per}_{\beta(0,\cdot)}$ in Theorems \ref{m-TW} and \ref{m-SS} are then accordingly specified.

 Let $\mathbb{Y}$ be the set of all bounded and continuous functions
from $\mathbb{R}$ to $\mathbb{R}$, $T_1(t,s)$ and $T_2(t,s)$ be the linear semigroups on  $\mathbb{Y}$ generated by $$\frac{\partial v}{\partial t}\!=\!L_1v+v(b_1(t,x)-a_{12}(t,x)u^*_2(t,x))$$ and $$\frac{\partial v}{\partial t}\!=\!L_2v
    +v(b_2(t,x)-2a_{22}(t,x)u^*_2(t,x)),$$ respectively. It follows that $T_1(t,s)$ and $T_2(t,s)$ are compact with the respect to the compact open topology for each $t>s\ge0$ (see, e.g., \cite{Hess}). For any $u=(u_1,u_2)\in \mathcal{C}$, define $F:[0,+\infty)\times\mathcal{C}\rightarrow \mathcal{C}$ by
    $$F(t,u)=\left(\begin{array}{c}
  -a_{11}(t,\cdot)u^2_1+a_{12}(t,\cdot)u_1u_2\\
  a_{21}(t,\cdot)u^*_2(t,\cdot)u_1-a_{21}(t,\cdot)u_1u_2+a_{22}(t,\cdot)u^2_2
    \end{array}\right).$$ Then we rewrite system \eqref{NModel} as an integral equation form:
    \begin{align}\label{inteq}& v(t)=T(t,0)v(0)+\int_{0}^{t}T(t,s)F(s,v(s))ds,\quad t>0,\nonumber\\
    & v(0)=\phi\in\mathcal{C}_{\beta(0,\cdot)},
    \end{align}
    where $T(t,s)=diag(T_1(t,s),T_2(t,s))$.

    As usual, a solution of (\ref{inteq}) is called a mild solution of system \eqref{NModel}.
    It then follows that for any $\phi\in \mathcal{C}_{\beta(0,\cdot)}$, system \eqref{NModel} has a mild solution $u(t,\cdot,\phi)$ defined on $[0,\infty)$ with $u(0,\cdot,\phi)=\phi$, and $u(t,\cdot,\phi)\in\mathcal{C}_{\beta(t,\cdot)}$ for all $t\ge0$, and it is a classical solution when $t>0$.        	 \begin{definition}
    	A function $u(t):=u(t,\cdot)$ is said to be an upper (a lower) solution of system \eqref{NModel} if it satisfies
    	$$ 
    	u(t)\geq(\leq) T(t,0) u(0)+ \int_0^t T(t,s)F(s,u(s))ds,\quad t\ge0.
    	$$
    	 \end{definition}
    Define a family of operators  $\{Q_t\}_{t\ge0}$ from $\mathcal{C}_{\beta(0,\cdot)}$ to $\mathcal{C}_{\beta(t,\cdot)}$ by  $Q_t(\phi):=u(t,\cdot,\phi)$, where $u(t,\cdot,\phi)$ is the solution of system \eqref{NModel} with $u(0,\cdot)=\phi\in\mathcal{C}_{\beta(0,\cdot)}$. Next we show that $\{Q_t\}_{t\ge0}$ is an $\omega$-time periodic and $L$-space periodic monotone semiflow from $\mathcal{C}_{\beta(0,\cdot)}$ to $\mathcal{C}_{\beta(t,\cdot)}$
     in the sense of Definition \ref{def-periodic-semiflow}. Indeed,  since for any $a\in L\mathbb{Z}$, $v(t,x):=u(t,x-a,\phi)$ and $w(t,x):=u(t+\omega,x,\phi)$ are  solutions of \eqref{NModel} with initial conditions $v(0,x)=u(0,x-a,\phi)$ and $w(0,x)=u(\omega,x,\phi)$, respectively,  we see that $Q_t$ satisfies the second and third properties in Definition \ref{def-periodic-semiflow}. The fourth property and (A2) follow from the same argument as in  \cite[Theorem 8.5.2]{Martin}.  The fifth property is true since system \eqref{NModel} is cooperative and the comparison principle holds. Moreover, Theorem \ref{VLEQ} implies (A1) is valid. Thus,  we have the
    following observation.
    \begin{proposition}\label{verification}
    Assume that (H1)--(H3) hold. Then $\{Q_t\}_{t\ge0}$ is an $\omega$-time periodic and $L$-space periodic monotone semiflow from $\mathcal{C}_{\beta(0,\cdot)}$ to $\mathcal{C}_{\beta(t,\cdot)}$, and $Q_\omega$ satisfies (A1) and (A2).
    \end{proposition}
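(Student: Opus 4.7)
The plan is to check the five axioms of Definition \ref{def-periodic-semiflow} together with assumptions (A1) and (A2), using the mild solution formulation \eqref{inteq} of the cooperative system \eqref{NModel}. First I would record global well-posedness and invariance: because $\hat E_2=(0,0)$ and $\hat E_1=(u_1^*,u_2^*)$ are classical solutions of \eqref{NModel} and the system is cooperative, the parabolic comparison principle forces any solution starting in the order interval $[\hat E_2,\hat E_1]$ to remain there for all positive times. This gives $Q_t:\mc{C}_{\beta(0,\cdot)}\to \mc{C}_{\beta(t,\cdot)}$ with $Q_t[0]=0$ and $Q_t[\beta(0,\cdot)]=\beta(t,\cdot)$.

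Next I would verify the semiflow axioms in order. Axiom (i) is immediate from $Q_0=\mathrm{Id}$. Axiom (ii) uses the $\omega$-periodicity in $t$ of the coefficients of \eqref{NModel}: for fixed $\phi$, the function $w(t,x):=u(t+\omega,x,\phi)$ solves the same evolution system with initial datum $Q_\omega[\phi]$, so uniqueness of mild solutions yields $Q_{t+\omega}=Q_t\circ Q_\omega$. Axiom (iii) uses the $L$-periodicity in $x$: for $a\in L\mathbb{Z}$, $v(t,x):=u(t,x-a,\phi)$ solves the same equation with initial value $T_a\phi$, hence $T_aQ_t=Q_tT_a$. Axiom (iv), joint continuity of $(t,\phi)\mapsto Q_t[\phi]$ in the compact open topology, together with the Kuratowski-contraction assertion in (A2), follow from the variation-of-constants formula \eqref{inteq}, the compactness of $T_1(t,s)$ and $T_2(t,s)$ on $\mathbb{Y}$ for $t>s\ge 0$, and standard parabolic estimates, along the lines of \cite[Theorem 8.5.2]{Martin}. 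Axiom (v), monotonicity, is exactly the comparison principle for cooperative parabolic systems.

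For assumption (A1), I would translate Theorem \ref{VLEQ} back through the change of variables $v_1=u_1$, $v_2=u_2^*-u_2$. If $\phi=(\phi_1,\phi_2)\in\mc{C}^{per}_{\beta(0,\cdot)}$ satisfies $\phi\gg 0$, then the corresponding initial datum for the original system \eqref{VL}, namely $(\phi_1,u_2^*(0,\cdot)-\phi_2)$, lies in $\mathbb{P}_+$ and has a first component $\phi_1\not\equiv 0$. Theorem \ref{VLEQ} then gives $u(n\omega,\cdot)\to E_1$ in $\mathbb{P}$, which in the cooperative variables reads $Q_{n\omega}[\phi]\to \hat E_1(0,\cdot)=\beta$ as $n\to\infty$, confirming (A1). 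For (A2) with $\kappa=0$, the compactness of $Q_\omega$ restricted to $[0,L]$ follows from the aforementioned compactness of $T_i(\omega,0)$ and the boundedness of the nonlinear term $F$ on $\mc{C}_{\beta(s,\cdot)}$ for $s\in[0,\omega]$, so the Kuratowski measure of $(Q_\omega[U])_{[0,L]}$ vanishes for every bounded $U\subset \mc{C}_{\beta(0,\cdot)}$.

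The delicate point, and the one I expect to require the most care, is the translation used in (A1): one must verify that the hypothesis $\phi\gg 0$ in the transformed periodic phase space really produces an initial condition for \eqref{VL} with a nontrivial $u_1$-component, so that Theorem \ref{VLEQ} rules out trapping at the semi-trivial equilibrium $\hat E_0=(0,u_2^*)$. The remaining verifications are conceptually routine once the variation-of-constants framework \eqref{inteq} and the cooperativity of \eqref{NModel} are in place.
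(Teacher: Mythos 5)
Your proposal is correct and follows essentially the same route the paper takes: verify each axiom of Definition \ref{def-periodic-semiflow} via uniqueness and periodicity of coefficients for (ii)--(iii), invoke the comparison principle for cooperativity for (v), cite the argument of \cite[Theorem 8.5.2]{Martin} for joint continuity and the $\alpha$-contraction (A2), and obtain (A1) from Theorem \ref{VLEQ} through the change of variables. The explicit translation you give in the last paragraph --- that $\phi\gg 0$ in the cooperative coordinates yields an initial datum $(\phi_1,u_2^*(0,\cdot)-\phi_2)\in\mathbb{P}_+$ with $\phi_1\not\equiv 0$ for the original system \eqref{VL} --- is the step the paper leaves implicit, and your accounting of it is accurate.
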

By Proposition \ref{verification}, we see that $\{Q_t\}_{t\ge0}$ satisfies all conditions in Theorem \ref{m-TW}. Thus, there exist two numbers $\bar{c}_+$ and $c^*_+$ for the minimal speed of different kind of traveling waves. In Theorem \ref{m-TW}, $\bar{c}_+$ might be plus infinite and the information of the limits of of traveling waves at $\pm\infty$ is not fully understood for general semiflows. Below, we will use the structure of competition to show that $\bar{c}_+$ is finite and derive some conditions under which the limits of of traveling waves at $\pm\infty$ can be figured out. By the comparison
arguments, it is easy to see that $\bar{c}_+\le \max\{c^*_{1+},c^*_{2+}$\}, where $c^*_{i+}$ is the rightward spreading speed of the $u_i$ species in the absence of  the $u_{3-i}$ species, $i=1,2$. Since $c^*_{1+}$ and $c^*_{2+}$ are determined by two Fisher-KPP type equations (see \eqref{u1} and \eqref{u2} below), it follows that $\bar{c}_+<+\infty$.

 To show that $\overline{c}_+$ is the minimal wave speed for periodic traveling waves of system \eqref{NModel} connecting $\beta(t,x)$ to $0$, we propose the following assumption:
\begin{enumerate}
\item[(H4)]$c^*_{1+}+c^*_{2-}>0$, where $c^*_{1+}$ and $c^*_{2-}$ are the rightward and leftward spreading speeds
of  two Fisher-KPP type equations \eqref{u1} and \eqref{u2}, respectively.
\end{enumerate}

Note that $c^*_{1+}$ is the rightward spreading speed of $u_1$ species when $u_2$ species vanishes, and $c^*_{2-}$ is the leftward  spreading speed of $u_2$ species when $u_1$ species vanishes. When  two species have opposite advection, they may separate even without competition. Assumption (H4) excludes such a possibility so that the competition plays a vital role. We remark that if $L_iu=\frac{\partial}{\partial x}(d_i(x)\frac{\partial u}{\partial x})$ with $d_i\in C^{\frac{v}{2},1+\nu}(\mathbb{R}\times\mathbb{R})$, or all the coefficient functions in \eqref{u1} and \eqref{u2} are  even in $x$ except $g_i$ is odd in $x$, $i=1,2$, Lemma \ref{H45} shows that (H1) and (H2) guarantee (H4).

\begin{theorem}\label{MIN}
Assume that (H1)--(H4) hold. Then for any $c\ge\bar{c}_+$, system \eqref{NModel} admits a periodic traveling wave $(U(t,x, x-ct),V(t,x,x-ct))$ connecting $\beta(t,x)$ to $0$, with wave profile components $U(t,x,\xi)$ and $V(t,x,\xi)$ being continuous and non-increasing in $\xi$, and for any $c<\bar{c}_+$, there is no such traveling wave connecting $\beta(t,x)$ to $0$.
\end{theorem}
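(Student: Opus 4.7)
The plan is to deduce Theorem \ref{MIN} directly from the abstract Theorem \ref{m-TW} applied to the $\omega$-time, $L$-space periodic monotone semiflow $\{Q_t\}_{t\ge 0}$ associated with \eqref{NModel}. Proposition \ref{verification} already supplies (A1) and (A2), and the comparison bound $\bar c_+\le\max\{c_{1+}^*,c_{2+}^*\}<+\infty$ noted just before the statement gives finiteness of $\bar c_+$. The non-existence assertion for $c<\bar c_+$ is immediate from Theorem \ref{m-TW}(3); the remaining task is to prove existence at every $c\ge\bar c_+$ while identifying the two end-state limits as $\beta$ and $0$.

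To invoke Theorem \ref{m-TW}(2) I first show that $0$ is isolated in $E$. The change of variables $v_1=u_1,\,v_2=u_2^*-u_2$ sets up a bijection between periodic solutions of \eqref{NModel} lying in the order interval $[0,\beta]$ and periodic solutions of the original system \eqref{VL} satisfying $0\le u_i\le u_i^*$. Proposition \ref{VLexistence} identifies the only possible scalar periodic solutions as $0$ and $u_i^*$ in each subsystem, and (H3) excludes any strictly positive coexistence solution. Hence \eqref{VL} admits exactly the three periodic solutions $(0,0),\,(u_1^*,0),\,(0,u_2^*)$, which in the cooperative variables become $E=\{0,\,(0,u_2^*),\,\beta\}$, a three-point set in which every element is isolated.

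Applying Theorem \ref{m-TW}(2), for each $c\ge\bar c_+$ either (i) a wave $W$ of $\{Q_t\}$ connecting $\beta$ to $0$ exists, or (ii) there are ordered $\alpha_1,\alpha_2\in E\setminus\{0,\beta\}=\{(0,u_2^*)\}$ with a wave $W_1$ connecting $\alpha_1$ to $0$ and a wave $W_2$ connecting $\beta$ to $\alpha_2$. In case (ii) necessarily $\alpha_1=\alpha_2=(0,u_2^*)$, and the key structural observation is that both $W_1$ and $W_2$ collapse to scalar KPP waves. Writing $W_1=(U_1,V_1)(t,x,\xi)$, monotonicity in $\xi$ together with $U_1(\cdot,\cdot,\pm\infty)=0$ forces $U_1\equiv 0$; the second component then satisfies
\begin{equation*}
\partial_t V_1=L_2 V_1+V_1\bigl(b_2-2a_{22}u_2^*+a_{22}V_1\bigr),
\end{equation*}
and the substitution $\tilde u_2:=u_2^*-V_1$ converts this into the scalar logistic equation $\partial_t\tilde u_2=L_2\tilde u_2+\tilde u_2(b_2-a_{22}\tilde u_2)$, namely \eqref{u2}. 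Since $V_1$ is non-increasing in $\xi=x-ct$ from $u_2^*$ to $0$, $\tilde u_2$ is non-decreasing in $\xi$ from $0$ to $u_2^*$, i.e.\ a leftward-invading traveling wave for the $u_2$-alone equation, which exists if and only if $c\le -c^*_{2-}$. Symmetrically, $W_2=(U_2,V_2)$ joining $\beta$ to $(0,u_2^*)$ forces $V_2\equiv u_2^*$, the $U_2$-equation reduces to the scalar KPP equation \eqref{u1} for $u_1$ alone, and the non-increasing wave from $u_1^*$ to $0$ exists if and only if $c\ge c^*_{1+}$.

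Combining the two constraints yields $c^*_{1+}\le c\le -c^*_{2-}$, i.e.\ $c^*_{1+}+c^*_{2-}\le 0$, which contradicts (H4). Case (ii) is therefore impossible, so case (i) holds for every $c\ge\bar c_+$, producing the desired traveling wave; monotonicity and continuity of $U,V$ in $\xi$ are inherited from the construction of $W$ in Theorem \ref{m-TW} (together with Lemma \ref{left-c}) and the parabolic smoothing of $Q_t$ for $t>0$. The main obstacle is the scalar-reduction step: one must correctly identify which KPP spreading speeds govern $W_1$ and $W_2$ and match the direction of invasion (leftward for the $v_2$-only branch, rightward for the $v_1$-only branch) with the $\xi=x-ct$ sign convention, so that the quantity $c^*_{1+}+c^*_{2-}$ appearing in (H4) is precisely what must be non-positive to sustain case (ii).
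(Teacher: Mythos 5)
Your argument is correct and follows essentially the same route as the paper: invoke Theorem \ref{m-TW}(2)--(3) via Proposition \ref{verification}, identify $(0,u_2^*)$ as the unique intermediate periodic state so that case (ii) forces $\alpha_1=\alpha_2=(0,u_2^*)$, reduce $W_1$ and $W_2$ to scalar KPP waves for \eqref{u2} and \eqref{u1} respectively (the paper phrases this as restricting the semiflow to the order intervals $[\hat E_0,\hat E_1]$ and $[\hat E_2,\hat E_0]$, you phrase it by noting the constant first/second wave components), and derive $c^*_{1+}\le c\le -c^*_{2-}$, contradicting (H4). The only cosmetic difference is that you make explicit the cardinality of $E$ and the direction conventions, which the paper states more tersely.
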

\begin{proof}
In view of Theorem \ref{m-TW} (2) and (3), it suffices to rule out the second case in Theorem \ref{m-TW} (2). Suppose, by contradiction, that the statement in Theorem \ref{m-TW} (2)(ii) is valid for some $c\ge\bar{c}_+$. Note that system \eqref{NModel} has exactly three time-space periodic solutions and $\hat E_0=(0,u_2^*(t,x))$ is the only intermediate time-space periodic solution between $\hat E_1=\beta(t,x)$ and $\hat E_2=0$, then we have $\alpha_1=\alpha_2=\hat E_0$. Thus, by restricting  system \eqref{NModel} on the order interval $[\hat{E}_0,\hat{E}_1]$ and $[\hat{E}_2,\hat{E}_0]$, respectively, we see that one scalar equation
\begin{equation}\label{u1}
 u_t=L_1u+u(b_1(t,x)-a_{11}(t,x)u)
\end{equation}
admits a periodic traveling wave $U(t,x,x-ct)$ connecting $u^*_1(t,x)$ to $0$ with $U(t,x,\xi)$ being continuous and nonincreasing in $\xi$, and the other scalar equation
\begin{equation}
v_t=L_2v+v(b_2(t,x)-2a_{22}(t,x)u^*_2(t,x)+a_{22}(t,x)v)
\end{equation}
also admits a periodic traveling wave $V(t,x,x-ct)$ connecting $u_2^*(t,x)$
to $0$ with $V(t,x,\xi)$ being continuous and nonincreasing in $\xi$.

Let $W(t,x,x-ct)=u^*_2(t,x)-V(t,x,x-ct)$. Then $W(t,x,x-ct)$ is a periodic traveling wave connecting $0$ to $u^*_2(t,x)$  of the following scalar equation with $W(t,x,\xi)$ being continuous and nondecreasing in $\xi$
\begin{equation}\label{u2}
w_t=L_2w+w(b_2(t,x)-a_{22}(t,x)w).
\end{equation}
  Note that $W(t,x,x-ct)$ is a periodic leftward traveling wave connecting $0$ to $u^*_2(t,x)$ with wave speed $-c$, and that systems \eqref{u1} and \eqref{u2} admit rightward spreading speed $c^*_{1+}$ and leftward spreading speed $c^*_{2-}$, respectively, which are also the rightward and the leftward minimal wave speeds (see, e.g., \cite[Theorem 2.1--2.3]{LYZ}). It then follows that $c\ge c^*_{1+}$ and  $-c\ge c^*_{2-}$. This implies that $c^*_{1+}+c^*_{2-}\le 0$, a contradiction.
\end{proof}
Let $\lambda_2(\mu)$ be the principle eigenvalue of the parabolic eigenvalue problem:
\begin{align}\label{eep2}
&\lambda \psi=-\frac{\partial\psi}{\partial t}+d_2(t,x)\frac{\partial^2\psi}{\partial x^2}-\!(2\mu d_2(t,x)+g_2(t,x))\frac{\partial\psi}{\partial x}\nonumber\\
&+\left(d_2(t,x)\mu^2\!+\!g_2(t,x)\mu\!+\!b_2(t,x)\!-\!a_{22}(t,x)u^*_2(t,x)\right)\psi,\quad (t,x)\in\mathbb{R}\times\mathbb{R},\\
&\psi(t,x+L)=\psi(t,x),\ \psi(t+\omega,x)=\psi(t,x),\quad (t,x)\in\mathbb{R}\times\mathbb{R}.\nonumber\end{align}

In order to prove that system \eqref{NModel} admits a single rightward spreading speed, we impose the following assumption:
\begin{enumerate}

\item[(H5)] $\limsup_{\mu\to 0^+}\frac{\lambda _2(\mu)}{\mu}\leq c_{1+}^*$, where $c_{1+}^*$ is the rightward spreading speed of \eqref{u1}.
\end{enumerate}

 By virtue of Lemma \ref{H45}, it follows that in the case where $L_iu=\frac{\partial}{\partial x}(d_i(x)\frac{\partial u}{\partial x})$ with $d_i\in C^{1+\nu}(\mathbb{R})$, or  all the coefficient functions of system \eqref{NModel} are even in $x$ except $g_i$ is odd in $x$, $i=1,2$, (H5) is automatically satisfied provided that (H1) and (H2) hold true.
\begin{theorem}\label{Qspreading}
Assume that (H1)--(H5) hold. Then the following statements are valid for system \eqref{NModel}:
\begin{enumerate}
		\item[(i)]If $\phi\in\mathcal{C}_{\beta(0,\cdot)}$, $0\le \phi\le \varpi \ll \beta(0,\cdot)$ for some $\varpi\in \mathcal{C}^{per}_{\beta(0,\cdot)}$, and $\phi(x)=0, \forall x\ge H$, for some $H\in \mathbb{R}$, then $\lim_{t\rightarrow\infty,x\ge ct}u(t,x,\phi)=0$ for any $c>\bar{c}_+$.
		\item[(ii)]If $\phi\in\mathcal{C}_{\beta(0,\cdot)}$ and $\phi(x)\ge \sigma$, $\forall x\le K$, for some $\sigma\in \mathbb{R}^2$ with $\sigma\gg0$ and $K\in\mathbb{R}$, then $\lim_{t\rightarrow\infty,x\le ct}(u(t,x,\phi)-\beta(t,x))=0$ for any $c<\bar{c}_+$.
	\end{enumerate}
\end{theorem}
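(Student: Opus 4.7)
My plan splits the theorem into its two statements. Statement (i) is an immediate consequence of Theorem \ref{m-SS}(i): by Proposition \ref{verification}, the semiflow $\{Q_t\}_{t\ge 0}$ satisfies (A1) and (A2) on $\mathcal{C}_{\beta(0,\cdot)}$ with $\beta = \hat E_1$, so Theorem \ref{m-SS}(i) gives $\lim_{t\to\infty,\,x\ge ct} u(t,x,\phi) = 0$ for every $c > \bar c_+$.

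Statement (ii) is more delicate: Theorem \ref{m-SS}(ii) only yields convergence to $\hat E_1$ to the left of $ct$ for $c < c_+^*$, while \eqref{cnumber} permits $c_+^* < \bar c_+$ in general. My plan is therefore to establish the equality $c_+^* = \bar c_+$ under hypotheses (H1)--(H5); once this is proved, (ii) follows immediately from Theorem \ref{m-SS}(ii). I would argue by contradiction, assuming $c_+^* < \bar c_+$ and choosing $c$ with $c_+^* \le c < \bar c_+$. Theorem \ref{m-TW}(1) then provides a periodic traveling wave $W = (U, V)$ of \eqref{NModel} at speed $c$ connecting $\hat E_1$ to some $\beta_1 \in E \setminus \{\hat E_1\}$. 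Under (H1)--(H3), the set $E$ consists of exactly $\hat E_2 = (0,0)$, $\hat E_0 = (0, u_2^*)$, and $\hat E_1$, and Theorem \ref{MIN} rules out $\beta_1 = \hat E_2$ for any $c < \bar c_+$, so necessarily $\beta_1 = \hat E_0$.

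The crucial observation is that the second component $V(t,x,\xi)$ is non-increasing in $\xi$ by Definition \ref{wave-our-def} while satisfying $V(t,x,-\infty) = V(t,x,+\infty) = u_2^*(t,x)$. Monotonicity then forces $V(t,x,\xi) \equiv u_2^*(t,x)$, which in the original coordinates means $u_2 \equiv 0$ along the wave. Consequently, $U(t,x,x-ct)$ is a periodic traveling wave of the scalar equation \eqref{u1} at speed $c$ connecting $u_1^*(t,x)$ to $0$. By \cite[Theorem 2.1--2.3]{LYZ}, the minimal wave speed of \eqref{u1} coincides with its rightward spreading speed $c_{1+}^*$, and hence $c \ge c_{1+}^*$. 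Combining this with the comparison estimate $\bar c_+ \le \max\{c_{1+}^*, c_{2+}^*\}$ recorded before Theorem \ref{MIN}, together with the inequality $c_{2+}^* \le c_{1+}^*$ extracted from (H5) via \eqref{eep2}, I obtain $\bar c_+ \le c_{1+}^* \le c$, contradicting $c < \bar c_+$.

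The main obstacle is the reduction step of the preceding paragraph: arguing that the wave from $\hat E_1$ to $\hat E_0$ necessarily collapses to a scalar traveling wave for \eqref{u1}. This is precisely where the specific competition structure of \eqref{VL} enters in a decisive way, since it is the coincidence of the two $V$-limits at $\pm\infty$ together with the monotonicity imposed by Definition \ref{wave-our-def} that forces $V$ to be constant in $\xi$ and reduces the two-species problem to the single-species equation \eqref{u1}. Without this collapse, extracting the lower bound $c \ge c_{1+}^*$ would require a direct spectral analysis of the two-component wave, which would be substantially harder and would lose the sharpness supplied by the scalar KPP theory.
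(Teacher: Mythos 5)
Your treatment of statement (i) and the first half of your argument for (ii) match the paper: statement (i) is indeed an immediate consequence of Theorem \ref{m-SS}(i), and for (ii) it suffices to show $c^*_+=\bar c_+$; arguing by contradiction with $c^*_+<\bar c_+$, the classification of $E$ plus Theorem \ref{m-TW}(1),(3) forces a wave at speed $c^*_+$ connecting $\hat E_1$ to $\hat E_0$, the monotonicity of the second component $V$ with equal limits at $\pm\infty$ forces $V\equiv u_2^*$, so the first component is a scalar monostable wave of \eqref{u1} and hence $c^*_+\ge c^*_{1+}$. This is exactly the paper's opening move.

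The gap is in your final step. You claim that (H5) yields $c^*_{2+}\le c^*_{1+}$ and combine it with $\bar c_+\le\max\{c^*_{1+},c^*_{2+}\}$ to conclude $\bar c_+\le c^*_{1+}$. But (H5) involves $\lambda_2(\mu)$ from \eqref{eep2}, whose zero-order coefficient is $b_2(t,x)-a_{22}(t,x)u_2^*(t,x)$ (the logistic rate, for which $\lambda_2(0)=0$), whereas $c^*_{2+}$ is the KPP spreading speed of \eqref{u2}, determined by linearization at $u_2=0$ with coefficient $b_2(t,x)$. Since $b_2>b_2-a_{22}u_2^*$, Lemma \ref{mp}(a) in fact gives $\lambda_{b_2}(\mu)>\lambda_2(\mu)$, so (H5) says nothing about $c^*_{2+}$ and certainly does not force $c^*_{2+}\le c^*_{1+}$. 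Moreover, even the intermediate bound $\bar c_+\le c^*_{1+}$ is stronger than anything available under (H1)--(H5); the crude estimate $\bar c_+\le\max\{c^*_{1+},c^*_{2+}\}$ is used in the paper only to show $\bar c_+<\infty$, not to locate it.

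What the paper actually does after obtaining $c^*_+\ge c^*_{1+}$ is quantitative: it fixes $c_1\in(c^*_+,\bar c_+)$, chooses $\mu_1$ with $\lambda_1(\mu_1)/\mu_1=c_1$, and uses (H5) precisely to pick a \emph{small} $\mu_2\in(0,\mu_1)$ with $c_2:=\lambda_2(\mu_2)/\mu_2<c_1$. It then builds an explicit wave-like upper solution $\bar u=(\bar u_1,\bar u_2)$ of the full coupled system \eqref{NModel} decaying like $e^{-\mu_1(x-c_1t)}$ and $e^{-\mu_2(x-c_1t)}$, where the key regrouping $-(b_2-a_{22}u_2^*)\bar u_2+(u_2^*-\bar u_2)(a_{22}\bar u_2-a_{21}\bar u_1)$ and the choice $\mu_2<\mu_1$, $m_0$, $q_0$ make the $\bar u_2$-inequality close. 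Comparing against the liminf lower bound $\delta(\hat c)>0$ for $\hat c\in(c_1,\bar c_+)$ (from \cite[Lemma 2.2]{WLL2002} applied to $\tilde P_\omega$) gives the contradiction. This explicit two-component supersolution construction is precisely what your argument skips, and it cannot be replaced by the eigenvalue comparison you propose.
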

\begin{proof}
In view of Theorem \ref{m-SS}, it suffices to show that $\overline{c}_+=c^*_+$. If this is not valid, then the definition of $\bar{c}_+$ and $c^*_+$ implies that $\bar{c}_+>c^*_+$. By Theorem \ref{m-TW} (1) and (3), it follows that system \eqref{NModel} admits a periodic traveling wave $(U_1(t,x,x-c^*_+t),U_2(t,x,x-c^*_+t))$ connecting $(u^*_1(t,x),u^*_2(t,x))$ to $(0,u^*_2(t,x))$ with $U_i(t,x,\xi) (i=1,2)$ being continuous and nonincreasing in $\xi$. Therefore, $U_2\equiv u^*_2(t,x)$, and $U_1(t,x,x-c^*_+t)$ is a periodic traveling wave connecting $u^*_1(t,x)$ to $0$. This implies $c^*_+\ge c^*_{1+}$ where $c^*_{1+}$ is the rightward spreading of \eqref{u1}. By \cite[Theorem B]{LYZ} (also see \cite[Theorem 3.1]{Liang}), it follows that $c^*_{1+}=\inf_{\mu>0}\frac{\lambda_1(\mu)}{\mu}$, where $\lambda_1(\mu)$ is the principal eigenvalue of the scalar parabolic eigenvalue problem:
\begin{eqnarray}\label{eep}
& & \lambda \psi=-\frac{\partial\psi}{\partial t}+d_1(t,x)\frac{\partial^2\psi}{\partial x^2}-(2\mu d_1(t,x)+g_1(t,x))\frac{\partial\psi}{\partial x}\nonumber\\
& &+(d_1(t,x)\mu^2+g_1(t,x)\mu+b_1(t,x))\psi,\quad (t,x)\in\mathbb{R}\times\mathbb{R},\\
& &\psi(t,x+L)=\psi(t,x),\ \psi(t+\omega,x)=\psi(t,x),\quad (t,x)\in\mathbb{R}\times\mathbb{R}.\nonumber
\end{eqnarray}
 For any given $c_1\in(c^*_+,\bar{c}_+)$, there exists $\mu_1>0$ such that $c_1=\frac{\lambda_1(\mu_1)}{\mu_1}$. Let $\phi^*_1(t,x)$ be the positive time and space periodic
eigenfunction associated with the principal eigenvalue $\lambda_1(\mu_1)$ of \eqref{eep}. Then it easily follows that
$$u_1(t,x):=e^{-\mu_1(x-c_1t)}\phi^*_1(t,x)=e^{-\mu_1x}e^{\lambda_1(\mu_1)t}\phi^*_1(t,x),\quad t\ge0,\ x\in\mathbb{R},$$
is a solution of the linear equation
$$
\frac{\partial u_1}{\partial t}=L_1u_1+b_1(t,x)u_1.
$$
Since $c^*_{1+}<c_1$ and (H5) holds,  we can choose a
small number $\mu_2\in (0, \mu_1)$ such that  $c_2:=\frac{\lambda_2(\mu_2)}{\mu_2}< c_1$.
  Let $\phi_2^*(t,x)$ be the positive time and space periodic
eigenfunction associated with the principal eigenvalue $\lambda_2(\mu_2)$ of \eqref{eep2}.
It is easy to see that
$$
u_2(t,x):=e^{-\mu_2(x-c_2t)}\phi^*_2(x)=e^{-\mu_2 x} e^{\lambda _2(\mu_2)t}\phi^*_2(t,x)
$$
is a solution of the linear equation
\begin{equation}\label{Eq1}
\frac{\partial u_2}{\partial t}=L_2u_2+(b_2(t,x)-a_{22}(t,x)u^*_2(t,x))u_2.
\end{equation}
Since $c_1>c_2$, it follows that the function
$$
v_2(t,x):=e^{-\mu_2(x-c_1t)}\phi^*_2(t,x)=
e^{\mu_2(c_1-c_2)t}u_2(t,x),\quad t\ge0,\ x\in\mathbb{R},
$$
satisfies
\begin{equation}\label{ineq}
\frac{\partial v_2}{\partial t}\ge L_2v_2+(b_2(t,x)-a_{22}(t,x)u^*_2(t,x))v_2.
\end{equation}

Define two wave-like functions:
\begin{equation}
\overline{u}_1(t,x):=\min\{m_0e^{-\mu_1(x-c_1t)}\phi^*_1(t,x),u^*_1(t,x)\},\quad t\ge0,\ x\in \mathbb{R},
\end{equation} and
\begin{equation}
\overline{u}_2(t,x):=\min\{q_0e^{-\mu_2(x-c_1t)}\phi^*_2(t,x),u^*_2(t,x)\},\quad t\ge0,\ x\in\mathbb{R},
\end{equation}
where
$$q_0:= \max_{(t,x)\in[0,\omega]\times[0,L]}\frac{u^*_2(t,x)}{\phi^*_2(t,x)}>0,\quad m_0:=\min_{(t,x)\in[0,\omega]\times[0,L]}\frac{q_0a_{22}(t,x)\phi^*_2(t,x)}{a_{21}(t,x)\phi^*_1(t,x)}>0.$$
Now, we are ready to verify that $(\overline{u}_1,\overline{u}_2)$ is an upper solution to system \eqref{NModel}. Indeed, for all $x-c_1 t>\frac{1}{\mu_1}\ln\frac{m_0\phi_1^*(t,x)}{u^*_1(t,x)}$, we have $\overline{u}_1(t,x)=m_0e^{-\mu_1(x-c_1t)}\phi^*_1(t,x)$, and hence,
\begin{eqnarray}
& &\frac{\partial \overline{u}_1}{\partial t}-L_1\overline{u}_1-\overline{u}_1(b_1(t,x)-a_{12}(t,x)u^*_2(t,x)-a_{11}(t,x)\overline{u}_1+a_{12}(t,x)\overline{u}_2)\nonumber\\
& &\ge\frac{\partial \overline{u}_1}{\partial t}-L_1\overline{u}_1-b_1(t,x)\overline{u}_1=0\nonumber.
\end{eqnarray}
For all $x-c_1t<\frac{1}{\mu_1}\ln\frac{m_0\phi_1^*(t,x)}{u^*_1(t,x)}$, we obtain $\overline{u}_1(t,x)=u^*_1(t,x)$, and hence,
\begin{eqnarray*}
& &\frac{\partial \overline{u}_1}{\partial t}-L_1\overline{u}_1-\overline{u}_1(b_1(t,x)-a_{12}(t,x)u^*_2(t,x)-a_{11}(t,x)\overline{u}_1+a_{12}(t,x)\overline{u}_2)\\
& &\ge\frac{\partial \overline{u}_1}{\partial t}-L_1\overline{u}_1-\overline{u}_1(b_1(t,x)-a_{11}(t,x)\overline{u}_1)=0.
\end{eqnarray*}
On the other hand, for all $x-c_1t\!>\!\frac{1}{\mu_2}\ln\frac{q_0\phi_2^*(t,x)}{u^*_2(t,x)}\!>\!0$, it follows that  $$\overline{u}_2(t,x)=q_0e^{-\mu_2(x-c_1t)}\phi^*_2(t,x),$$ which satisfies inequality  \eqref{ineq}. Note that $$\overline{u}_1(t,x)\le m_0e^{-\mu_1(x-c_1t)}\phi^*_1(t,x), \quad \forall t\ge0,\ x\in\mathbb{R},$$ and $\mu_2\in(0,\mu_1)$, we get
$$
\begin{array}{l}
\frac{\partial \overline{u}_2}{\partial t}-L_2\overline{u}_2-a_{21}(t,x)(u^*_2(t,x)-\overline{u}_2)\overline{u}_1-\overline{u}_2(b_2(t,x)-2a_{22}(t,x)u^*_2(t,x)+a_{22}(t,x)\overline{u}_2)\nonumber\\
=\frac{\partial \overline{u}_2}{\partial t}-L_2\overline{u}_2-(b_2(t,x)-a_{22}(t,x)u^*_2(t,x))\overline{u}_2+(u^*_2(t,x)-\overline{u}_2)(a_{22}(t,x)\overline{u}_2-a_{21}(t,x)\overline{u}_1)\\
\ge(u^*_2(t,x)-\overline{u}_2)e^{-\mu_1(x-c_1t)}a_{21}(t,x)\phi^*_1(t,x)(\frac{q_0a_{22}(t,x)\phi^*_2(t,x)}{a_{21}(t,x)\phi^*_1(t,x)}-m_0)\\
\ge0.
\end{array}$$
For all $x-c_1t<\frac{1}{\mu_2}\ln\frac{q_0\phi_2^*(t,x)}{u^*_2(t,x)}$, we have $\overline{u}_2(t,x)=u^*_2(t,x)$. Therefore,
\begin{align*}
& \frac{\partial \overline{u}_2}{\partial t}-L_2\overline{u}_2-a_{21}(t,x)(u^*_2(t,x)-\overline{u}_2)\overline{u}_1-\overline{u}_2(b_2(t,x)-2a_{22}(x)u^*_2(t,x)+a_{22}(t,x)\overline{u}_2)\\
&=\frac{\partial u^*_2}{\partial t}-L_2u^*_2-u^*_2(b_2(t,x)-a_{22}(t,x)u^*_2)=0.
\end{align*}
It then follows that $\overline{u}=(\overline{u}_1,\overline{u}_2)$ is a continuous upper  solution of system \eqref{NModel}.
\smallskip

  Let $\phi\in \mathcal{C}_{\beta(0,\cdot)}$  with $\phi(x)\ge \sigma$, $\forall x\le K$ and $\phi(x)=0$, $\forall x\ge H$, for some $\sigma\in \mathbb{R}^2$ with $\sigma\gg0$ and $K,H\in\mathbb{R}$. By the arguments in \cite[Lemma 2.2]{WLL2002} and the proof of Theorem \ref{m-SS}, as applied to $\tilde{P}_\omega$,  it follows that for any $c<\bar{c}_+$, there exists $\delta(c)>0$ such that
  \begin{equation}\label{ineq2}{\liminf}_{n\rightarrow\infty, x\le cn\omega}|u(n\omega,x,\phi)|\ge\delta(c)>0.\end{equation} Moreover, there exists a sufficiently large positive constant $A\in L\mathbb{Z}$ such that
$$\phi(x)\le\overline{u}(0,x-A) :=\psi(x),\quad \forall x\in \mathbb{R}.$$
By the translation invariance of $Q_t$, it follows that $\overline{u}(t,x-A)$ is still an upper solution of system \eqref{NModel}, and hence,
\begin{equation}\label{ineq3}0\le u(t,x,\phi)\le u(t,x,\psi)=\overline{u}(t,x-A),\quad \forall x\in\mathbb{R},\  t\ge0.\end{equation}
Fix a number $\hat c\in(c_1,\bar{c}_+)$. Letting $t=n\omega$, $x=\hat cn\omega$ and $n\rightarrow\infty$ in \eqref{ineq3}, together with \eqref{ineq2}, we have
$$0<\delta(\hat c)\le\liminf_{n\rightarrow\infty}|u(n\omega,\hat{c}n\omega,\phi)|\le\lim_{n\rightarrow\infty}|\overline{u}(n\omega,\hat cn\omega-A)|=0,$$
which is a contradiction. Thus, $c^*_+=\bar{c}_+$.
\end{proof}

Note that the leftward case can be addressed in a similar way. Indeed, by making a change of variable $v(t,x)=u(t,-x)$ for system \eqref{NModel}, we obtain similar results for the rightward case of the resulting system, which is the leftward case for system \eqref{NModel}.
\begin{remark}\label{Re1}
In the case where $L_iu=\frac{\partial}{\partial x}(d_i(x)\frac{\partial u}{\partial x})$ with $d_i\in C^{1+\nu}(\mathbb{R})$ in system \eqref{NModel}, $i=1,2$, or all the coefficient functions of system \eqref{NModel} are even in x except $g_i$ is odd in x, $i=1,2$, it follows from Lemma  \ref{H45} that system \eqref{NModel} admits a single rightward spreading speed which is coincident with the minimal rightward wave speed provided that (H1)--(H3) hold.
\end{remark}
\subsection{Linear determinacy of spreading speed}
In this subsection, we give a set of sufficient conditions for the rightward spreading speed to be determined by the linearization of system \eqref{NModel} at $\hat E_2=(0,0)$, which is
\begin{align}\label{linear}
&\frac{\partial v_1}{\partial t}\!=\!L_1v_1+(b_1(t,x)-a_{12}(t,x)u^*_2(t,x))v_1, \\
&\frac{\partial v_2}{\partial t}\!=\!L_2v_2\!+\!a_{21}(t,x)u^*_2(t,x)v_1
+\!(b_2(t,x)\!-\!2a_{22}(t,x)u^*_2(t,x))v_2,\quad t>0,\  x\in\mathbb{R}\nonumber.
\end{align} 

Clearly, under (H2) the following scalar equation
\begin{equation}\label{c0eq}
\frac{\partial u}{\partial t}= L_1u+u(b_1(t,x)-a_{12}(t,x)u^*_2(t,x)-a_{11}(t,x)u),\quad t>0, x\in\mathbb{R}, \\
\end{equation}
admits a rightward spreading speed (also the minimal rightward wave speed) $c^0_+=\inf\limits_{\mu>0}\frac{\lambda_0(\mu)}{\mu}$, where $\lambda_0(\mu)$ is the principle eigenvalue of the following parabolic eigenvalue problem:
\begin{eqnarray}\label{eep0}
& & \lambda \psi=-\frac{\partial\psi}{\partial t}+d_1(t,x)\frac{\partial^2\psi}{\partial x^2}-(2\mu d_1(t,x)+g_1(t,x))\frac{\partial\psi}{\partial x}\\
& &+(d_1(t,x)\mu^2+g_1(t,x)\mu+b_1(t,x)-a_{12}(t,x)u^*_2(t,x))\psi,\quad (t,x)\in\mathbb{R}\times\mathbb{R},\nonumber\\
& &\psi(t,x+L)=\psi(t,x),\ \psi(t+\omega,x)=\psi(t,x),\quad (t,x)\in\mathbb{R}\times\mathbb{R}.\nonumber
\end{eqnarray}
The next result shows that $c^0_+$ is a lower bound of the slowest spreading $c^*_+$ of system \eqref{NModel}.
\begin{proposition}\label{lb}Let (H1)--(H3) hold. Then $c^*_+\ge c^0_+$.
\end{proposition}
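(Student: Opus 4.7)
The strategy is to combine a cooperative comparison between \eqref{NModel} and the scalar equation \eqref{c0eq} with the abstract characterization $c^*_+ = \sup\{c:a(c,1;+\infty)=\beta\}$ from \eqref{cnumber}. On the invariant order interval $[0,\beta(t,\cdot)]$, the system \eqref{NModel} is cooperative, and the $v_1$-equation differs from \eqref{c0eq} only by the nonnegative term $a_{12}(t,x)v_1v_2$. The comparison principle therefore yields
$Q_t[(\phi_1,\phi_2)]_1 \ge \tilde Q_t[\phi_1]$
for every $(\phi_1,\phi_2)\in[0,\beta(0,\cdot)]$, where $\tilde Q_t$ denotes the solution semiflow of the scalar equation \eqref{c0eq}.

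This pointwise comparison lifts to the associated maps $\tilde P_\omega$ from Section 2.2. Choose the wave-like initial datum for the system's recursion \eqref{iteration} of the form $\tilde\phi^{\mathrm{sys}}(s)(\theta) = (\phi^{\mathrm{sc}}(s+\theta),\,\phi_2^{\mathrm{sys}}(s+\theta))$, where $\phi^{\mathrm{sc}}$ is the wave-like initial datum for the scalar equation's recursion and $\phi_2^{\mathrm{sys}}$ is any admissible nonnegative profile; then $\tilde\phi^{\mathrm{sys}} \ge (\tilde\phi^{\mathrm{sc}},0)$ componentwise. An induction on \eqref{iteration}, combining the monotonicity of $\tilde P_\omega^{\mathrm{sys}}$ with the inequality $\tilde P_\omega^{\mathrm{sys}}[(v_1,0)]_1 \ge \tilde P_\omega^{\mathrm{sc}}[v_1]$ (the $\omega$-period version of the comparison above), yields
\[
a_n^{\mathrm{sys}}(c,1;s) \ge (a_n^{\mathrm{sc}}(c,1;s),\,0),\quad n\ge 0,\ s\in\R.
\]
Passing to the limit gives $a^{\mathrm{sys}}(c,1;+\infty) \ge (a^{\mathrm{sc}}(c,1;+\infty),\,0)$.

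Now fix $c<c^0_+$. The scalar equation \eqref{c0eq} is monostable with unique positive periodic equilibrium $\tilde u_1^*$ (guaranteed by (H2) and Proposition \ref{VLexistence}), and $c^0_+$ is simultaneously its rightward spreading speed and its minimal wave speed by the classical scalar monostable theory, e.g., Liang and Zhao \cite{Liang2}. Hence $a^{\mathrm{sc}}(c,1;+\infty) = \tilde u_1^*$, and so $a^{\mathrm{sys}}(c,1;+\infty) \ge (\tilde u_1^*,\,0)$. Letting $s\to+\infty$ in the fixed-point identity $R_{c,1}[a^{\mathrm{sys}}](s)=a^{\mathrm{sys}}(s)$ (along a sequence outside the countable exceptional set) and invoking the continuity of $\tilde P_\omega^{\mathrm{sys}}$ at constant functions shows that $a^{\mathrm{sys}}(c,1;+\infty)$ is itself a fixed point of $\tilde P_\omega^{\mathrm{sys}}$ in $\bar Y_\beta$, and therefore corresponds to a time-space periodic solution of \eqref{NModel} inside $[0,\beta]$. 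By (H1)--(H3) the only such solutions are $\hat E_2=0$, $\hat E_0=(0,u_2^*)$ and $\hat E_1=\beta$, and only $\hat E_1$ has first component bounded below by $\tilde u_1^*>0$. Thus $a^{\mathrm{sys}}(c,1;+\infty)=\beta$, which gives $c\le c^*_+$; letting $c\uparrow c^0_+$ yields $c^*_+\ge c^0_+$.

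The main delicate point is the identification of $a^{\mathrm{sys}}(c,1;+\infty)$ as a genuine time-space periodic solution of \eqref{NModel}, since this is what allows the enumeration $\{\hat E_0,\hat E_1,\hat E_2\}$ coming from (H1)--(H3) to force the limit to equal $\beta$. The rest is a routine monotone-comparison induction combined with the scalar spreading speed result.
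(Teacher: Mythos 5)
Your proof is correct and arrives at the same conclusion, but it takes a genuinely different and in some ways more self-contained route than the paper's. The paper splits into two cases according to whether $\bar c_+>c^*_+$ or $\bar c_+=c^*_+$: in the first case it shows the wave with speed $c^*_+$ connecting $\hat E_1$ to $\hat E_0$ must have constant second component $u^*_2$, so that its first component is a wave of \eqref{u1}, giving $c^*_+\ge c^*_{1+}$, and then compares principal eigenvalues $\lambda_1(\mu)>\lambda_0(\mu)$ via Lemma~\ref{mp}(a) to conclude $c^*_{1+}>c^0_+$; in the second case it uses the same pointwise scalar comparison $u_1\ge v$ that you set up, but pushes it through the \emph{spreading speed} statements (Theorems~\ref{Qspreading} and \ref{m-SS}) to reach a contradiction at the ray $x=\hat c t$. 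You instead work entirely inside the abstract recursion \eqref{iteration}: you lift the comparison $Q_t[(\phi_1,\phi_2)]_1\ge\tilde Q_t[\phi_1]$ through $F,F^{-1},G$ to $\tilde P_\omega^{\mathrm{sys}}[(v_1,v_2)]_1\ge\tilde P_\omega^{\mathrm{sc}}[v_1]$, run the componentwise induction on $a_n(c,1;\cdot)$, and then use that $a^{\mathrm{sys}}(c,1;+\infty)$ is a fixed point of $\tilde P_\omega^{\mathrm{sys}}$ in $\bar Y_\beta$ (which corresponds, via $F^{-1}G$, to an element of $E$) together with (H1)--(H3), which leave only $\{0,\hat E_0,\hat E_1\}$, to force $a^{\mathrm{sys}}(c,1;+\infty)=\beta$ whenever $c<c^0_+$. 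Both routes ultimately rely on the scalar KPP identification $c^0_+=\inf_\mu\lambda_0(\mu)/\mu$ being both the minimal wave speed and the FZ abstract speed for \eqref{c0eq}, and on the known inequality $u_1^*\ge v_0$ so that only $\beta$ among the three periodic states has first component $\ge v_0>0$. Your approach avoids the case split on $\bar c_+$ vs.\ $c^*_+$ and never invokes the spreading-speed theorems, at the cost of having to track the detail that $a^{\mathrm{sys}}(c,1;+\infty)$ is genuinely a fixed point and not merely a limiting value; this is supplied by Lemma~\ref{A} (the second occurrence of ``$-\infty$'' there is evidently a typo for ``$+\infty$''), so the argument closes.
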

\begin{proof}In the case where $\bar{c}_+>c^*_+$, by the same arguments as in Theorem \ref{Qspreading}, we see that $c^*_+\ge c^*_{1+}$ where $c^*_{1+}$ is the rightward spreading speed of \eqref{u1}. Since $b_1(t,x)>b_1(t,x)-a_{12}(t,x)u^*_2(t,x), \forall (t,x)\in\mathbb{R}\times\mathbb{R}$, by Lemma \ref{mp} (a) with $d(t,x)=d_1(t,x)$ and $g(t,x)=g_1(t,x)$, it is easy to see that $\lambda_1(\mu)>\lambda_0(\mu), \forall \mu\ge0$,  where  $\lambda_1(\mu)$ is the principal eigenvalue of \eqref{eep}. Thus, we have $c^*_+\ge c^*_{1+}>c^0_+$.

In the case where $\bar{c}_+=c^*_+$, let $u(t,\cdot,\phi)=(u_1(t,\cdot,\phi),u_2(t,\cdot,\phi))$ be the solution of system \eqref{NModel} with $u(0,\cdot)=\phi=(\phi_1,\phi_2)\in \mathcal{C}_{\beta(0,\cdot)}$. Then the positivity of the solution implies that
$$\frac{\partial u_1}{\partial t}\ge L_1u_1+u_1(b_1(t,x)-a_{12}(t,x)u^*_2(t,x)-a_{11}(t,x)u_1),\quad t>0, x\in\mathbb{R}.$$
Let $v(t,x,\phi_1)$ be the unique solution of \eqref{c0eq} with $v(0,\cdot)=\phi_1$. Then the comparison principle yields that
 \begin{equation}\label{ineq1}u_1(t,x,\phi)\ge v(t,x,\phi_1), \quad \forall t\ge0,\  x\in \mathbb{R}.\end{equation}
 Since $\lambda(d_1,g_1,b_1-a_{12}u^*_2)>0$, Proposition \ref{VLexistence} implies that there exists a unique positive time-space periodic solution $v_0(t,x)$ of \eqref{c0eq}. Let $\phi^0=(\phi_1^0,\phi_2^0)\in \mathcal{C}_{\beta(0,\cdot)}$
 be chosen as in Theorem \ref{Qspreading} (i) and (ii) such that  $\phi_1^0\leq  v_0(0,\cdot)$. Suppose, by contradiction,  $c^*_+<c^0_+$. Choose $\hat c\in(\bar{c}_+,c^0_+)$. Then Theorem  \ref{Qspreading} implies that
  $\lim_{t\rightarrow\infty,x\ge \hat ct}u_1(t,x,\phi^0)=0$. By Theorem \ref{m-SS}, as applied to system \eqref{c0eq}, we further obtain $\lim_{t\rightarrow\infty,x\le \hat ct}(v(t,x,\phi^0_1)-v_0(t,x))=0$.
However, letting $x=\hat c t$ in \eqref{ineq1},  we get  $\lim_{t\rightarrow\infty,x= \hat ct}(v(t,x,\phi^0_1))=0$, which is a contradiction.
\end{proof}

For any given $\mu\in \mathbb{R}$, letting $v(t,x)=e^{-\mu x}u(t,x)$ in \eqref{linear}, we then have
\begin{eqnarray}\label{uModel}
\frac{\partial u_1}{\partial t}&\!=&\!L_1u_1\!-\!2\mu d_1(t,x)\frac{\partial u_1}{\partial x}\!+\!(d_1(t,x)\mu^2\!+\!g_1(t,x)\mu\!+\!b_1(t,x)\!-\!a_{12}(t,x)u^*_2(t,x))u_1, \nonumber\\
\frac{\partial u_2}{\partial t}&\!=&\!L_2u_2\!-\!2\mu d_2(t,x)\frac{\partial u_2}{\partial x}+a_{21}(t,x)u^*_2(x)u_1\\
& &+(d_2(t,x)\mu^2\!+\!g_2(t,x)\mu\!+\!b_2(t,x)\!-\!2a_{22}(t,x)u^*_2(t,x))u_2,\quad t>0, x\in\mathbb{R}\nonumber.
\end{eqnarray}
Substituting $u(t,x)=e^{\lambda t}\phi(t,x)$ into \eqref{uModel}, we obtain the following periodic eigenvalue problem:
\begin{align}\label{Lpep}
\lambda \phi_1&=-\frac{\partial\phi_1}{\partial t}+d_1(t,x)\frac{\partial^2\phi_1}{\partial x^2}-(2\mu d_1(t,x)+g_1(t,x))\frac{\partial\phi_1}{\partial x}\nonumber\\
 &+(d_1(t,x)\mu^2+g_1(t,x)\mu+b_1(t,x)-a_{12}(t,x)u^*_2(t,x))\phi_1,\quad (t,x)\in\mathbb{R}\times\mathbb{R},\nonumber\\
\lambda \phi_2&=-\frac{\partial\phi_2}{\partial t}+d_2(t,x)\frac{\partial^2\phi_2}{\partial x^2}-(2\mu d_2(t,x)+g_2(t,x))\frac{\partial\phi_2}{\partial x}+a_{21}(t,x)u^*_2(t,x)\phi_1\nonumber\\
&  +\left(d_2(t,x)\mu^2\!+\!g_2(t,x)\mu\!+\!b_2(t,x)\!-\!2a_{22}(t,x)u^*_2(t,x)\right)\phi_2,\quad  (t,x)\in\mathbb{R}\times\mathbb{R},\nonumber\\
\phi_i(t,x&+L)=\phi_i(t,x),\ \phi_i(t+\omega,x)=\phi_i(t,x),\quad (t,x)\in\mathbb{R}\times\mathbb{R},\ i=1,2.
\end{align}
Let $\overline{\lambda}(\mu)$ be the principal eigenvalue of the following  periodic eigenvalue problem:
\begin{align}\label{Lpep2}
\lambda \psi&=-\frac{\partial\psi}{\partial t}+d_2(t,x)\frac{\partial^2\psi}{\partial x^2}-(2\mu d_2(t,x)+g_2(t,x))\frac{\partial\psi}{\partial x}\nonumber\\
&  +\left(d_2(t,x)\mu^2\!+\!g_2(t,x)\mu\!+\!b_2(t,x)\!-\!2a_{22}(t,x)u^*_2(t,x)\right)\psi,\quad  (t,x)\in\mathbb{R}\times\mathbb{R},\nonumber\\
\psi(t,x&+L)=\psi(t,x),\ \psi(t+\omega,x)=\psi(t,x),\quad (t,x)\in\mathbb{R}\times\mathbb{R}.
\end{align}
Since  there exists $\mu_0>0$ such that $c^0_+=\frac{\lambda_0(\mu_0)}{\mu_0}$. Now we introduce the following condition:
\begin{enumerate}
\item[(D1)]  $\lambda_0 (\mu_0)>\overline{\lambda}(\mu_0)$.
\end{enumerate}

\begin{proposition}\label{ef}
Let (H1)--(H3) and (D1) hold. Then the periodic eigenvalue problem \eqref{Lpep} with $\mu=\mu_0$ has a simple eigenvalue $\lambda_0(\mu_0)$ associated with a positive periodic eigenfunction $\phi^*=(\phi_1^*,\phi_2^*)$.
\end{proposition}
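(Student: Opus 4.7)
The plan is to exploit the triangular structure of the linear system \eqref{Lpep}: the equation for $\phi_1$ is decoupled from $\phi_2$ and coincides with the scalar periodic-parabolic problem \eqref{eep0}, while the equation for $\phi_2$ is a scalar periodic-parabolic problem with source term $a_{21}u_2^*\phi_1$. Writing $\mathcal{L}_1$ for the scalar operator appearing in the first equation of \eqref{Lpep} at $\mu=\mu_0$ and $\mathcal{L}_2$ for the one defined in \eqref{Lpep2} at $\mu=\mu_0$, their principal eigenvalues are $\lambda_0(\mu_0)$ and $\overline{\lambda}(\mu_0)$, respectively, and (D1) puts these in the order $\lambda_0(\mu_0)>\overline{\lambda}(\mu_0)$.

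For existence, first take $\phi_1^*$ to be the strictly positive periodic eigenfunction of $\mathcal{L}_1$ corresponding to $\lambda_0(\mu_0)$; its existence, strict positivity, and algebraic simplicity are standard for scalar periodic-parabolic operators (see, e.g., Hess \cite{Hess}). Next, invoke (D1) to observe that the Poincar\'e map of the shifted operator $\mathcal{L}_2-\lambda_0(\mu_0)I$ has spectral radius $e^{\omega(\overline{\lambda}(\mu_0)-\lambda_0(\mu_0))}<1$, so that $\lambda_0(\mu_0)I-\mathcal{L}_2$ is boundedly invertible on the space of $(\omega,L)$-periodic continuous functions, with positive inverse via a Neumann-series representation and the parabolic strong maximum principle. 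Setting
\[
\phi_2^* := \bigl(\lambda_0(\mu_0)I-\mathcal{L}_2\bigr)^{-1}\bigl(a_{21}u_2^*\phi_1^*\bigr),
\]
positivity of the data ($a_{21}>0$, $u_2^*>0$, $\phi_1^*>0$) combined with positivity of the resolvent yields $\phi_2^*>0$, and by construction $(\phi_1^*,\phi_2^*)$ solves \eqref{Lpep} with $\lambda=\lambda_0(\mu_0)$.

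For simplicity, I would take an arbitrary periodic eigenvector $(\phi_1,\phi_2)$ of \eqref{Lpep} associated with $\lambda_0(\mu_0)$ and split into two cases. If $\phi_1\equiv 0$, then $\phi_2$ would be a periodic eigenfunction of $\mathcal{L}_2$ at eigenvalue $\lambda_0(\mu_0)$, contradicting the fact that $\lambda_0(\mu_0)$ lies outside the spectrum of $\mathcal{L}_2$; hence $\phi_2\equiv 0$ as well. Otherwise, the algebraic simplicity of $\lambda_0(\mu_0)$ as an eigenvalue of $\mathcal{L}_1$ forces $\phi_1=c\phi_1^*$ for some scalar $c$, and then invertibility of $\lambda_0(\mu_0)I-\mathcal{L}_2$ pins down $\phi_2=c\phi_2^*$. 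Thus the geometric eigenspace is one-dimensional, and algebraic simplicity follows by the same Fredholm-alternative reasoning, reducing the nonexistence of a generalized eigenvector for the $2\times 2$ system to the corresponding classical fact for $\mathcal{L}_1$.

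The main obstacle, or really the only non-routine input, is to make the positivity of $(\lambda_0(\mu_0)I-\mathcal{L}_2)^{-1}$ rigorous in the periodic-parabolic setting: one must convert (D1) into a genuine Neumann-series representation of the resolvent through the Poincar\'e map of $\mathcal{L}_2-\lambda_0(\mu_0)I$ and then use the parabolic strong maximum principle to upgrade nonnegativity of the resulting function to strict positivity uniformly in $(t,x)$.
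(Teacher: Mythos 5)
Your proposal is correct and follows essentially the same route as the paper: exploit the triangular structure of \eqref{Lpep}, take $\phi_1^*$ from the decoupled scalar problem, and use (D1) to invert the second shifted operator with a positive resolvent (the paper invokes \cite[Theorem 7.3]{Hess} for $(r_1 - U_2(\omega,0))^{-1} \ge 0$, which is exactly the Poincar\'e-map Neumann-series mechanism you describe). Your simplicity argument is spelled out in more detail than the paper's single sentence, but the underlying mechanism is identical.
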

\begin{proof}
 Clearly, there exists a positive eigenfunction $\phi_1^*$ associated with the principle eigenvalue $\lambda_0(\mu_0)$ of \eqref{c0eq}. Since the first equation of \eqref{Lpep} is decoupled from the second one, it suffices to show that $\lambda_0(\mu_0)$ has a positive eigenfunction $\phi^*=(\phi^*_1,\phi^*_2)$ in \eqref{Lpep}, where $\phi^*_2$ is to be determined. Let $U(t,s)$, $0\le s<t$, be the evolution operator generated by \eqref{uModel} with $u(0,\cdot)\in \mathbb{P}$, and $U_1(t,s)$ and $U_2(t,s)$, $0\le s<t$ be the evolution operators generated by the following scalar parabolic equations:
\begin{align*}
\label{uModel2}
\frac{\partial u}{\partial t}=& L_1u-2\mu_0 d_1(t,x)\frac{\partial u}{\partial x}\\
&+(d_1(t,x)\mu_0^2+g_1(t,x)\mu_0+b_1(t,x)-a_{12}(t,x)u^*_2(t,x))u, \quad t>0, \ x\in\mathbb{R},\nonumber\\
 u(0,\cdot)&=\varphi_1\in \mc{P}
\end{align*}
and
\begin{align*}
\frac{\partial u}{\partial t}=& L_2u\!-\!2\mu_0 d_2(t,x)\frac{\partial u}{\partial x}\\
&+(d_2(t,x)\mu_0^2\!+\!g_2(t,x)\mu_0\!+\!b_2(t,x)\!-\!2a_{22}(t,x)u^*_2(t,x))u,\quad t>0,\  x\in\mathbb{R}\nonumber,\\
u(0,\cdot)&=\varphi_2\in \mc{P},
\end{align*}
respectively.
By the variation of constants formula for scalar parabolic equations, it then follows that
\begin{equation*}
U(t,0)\left(\begin{array}{l}
\varphi_1\\
\varphi_2
\end{array}\right)=\left(\begin{array}{c}U_1(t,0)\varphi_1\\ U_2(t,0)\varphi_2+\int^{t}_{0}U_2(t,s)a_{21}(s,\cdot)u^*_2(s,\cdot)U_1(s,0)\varphi_1ds
\end{array}\right),\quad \forall t>0.\end{equation*}
And it is easy to see that $U_1(\omega,0)$ and $U_2(\omega,0)$ are strongly positive and compact linear operators on $\mc{P}$. Let $r_1$ and $r_2$ be the spectral radii of $U_1(\omega,0)$ and $U_2(\omega,0)$. Then the Krein-Rutman theorem (see e.g., \cite[Theorem 7.2 ]{Hess}) implies $r_i$ is the principle eigenvalue of $U_i(\omega,0), i=1,2$, and $r_1=e^{\lambda_0(\mu_0)\omega}$ and $r_2=e^{\overline{\lambda}(\mu_0)\omega}$. Moreover, $U_1(t,0)\phi_1^*(0,\cdot)=e^{\lambda_0(\mu_0)t}\phi_1^*(t,\cdot)>0$, $\forall t>0.$
By \cite[Theorem 7.3]{Hess} and (D1), it follows that
\begin{equation}\label{1}(r_1-U_2(\omega,0))\varphi_2=\int^{\omega}_{0}U_2(\omega,s)a_{21}(s,\cdot)u^*_2(s,\cdot)U_1(s,0)\phi^*_1(0,\cdot)ds>0,\end{equation}
has a unique positive solution $\varphi^*_2\in \mc{P}$. Therefore,  $\varphi^*=(\phi_1^*(0,\cdot),\varphi^*_2)\in\mathbb{P}$ is a positive eigenfunction of $U(\omega,0)$ with the eigenvalue $r_1=e^{\lambda_0(\mu_0)\omega}$, that is, $U(\omega,0)\varphi^*=r_1\varphi^*$. Let $$\phi^*_2(t,\cdot)=e^{-\lambda_0(\mu_0)t}U_2(t,0)\varphi^*_2+\int^{t}_{0}e^{-\lambda_0(\mu_0)(t-s)}U_2(t,s)a_{21}(s,\cdot)u^*_2(s,\cdot)\phi^*_1(s,\cdot)ds.$$ Clearly, $\phi^*_2(t,x)$ is positive for $(t,x)\in\mathbb{R}_+\times\mathbb{R}$, and satisfies the second equation of \eqref{Lpep}, and $$\phi^*_2(\omega,\cdot)=e^{-\lambda_0(\mu_0)\omega}r_1\varphi^*_2=\varphi^*_2=\phi^*_2(0,\cdot).$$
This implies that $\phi^*=(\phi^*_1,\phi^*_2)$ is the positive time and space periodic eigenfunction associated with $\lambda_0(\mu_0)$.
 Since $\lambda_0(\mu_0)$ is a simple eigenvalue for \eqref{c0eq}, we see that so is $\lambda_0(\mu_0)$ for \eqref{Lpep}.
\end{proof}

From Proposition \ref{ef}, it is easy to see that for any given $M>0$, the function \begin{equation}\label{eqU}
U(t,x)=Me^{-\mu_0x}e^{\lambda_0(\mu_0)t}\phi^*(t,x),\quad t\ge0,\ x\in\mathbb{R},
\end{equation} is a positive solution of system \eqref{linear}. In order to obtain an explicit formula for the spreading speeding $\bar{c}_+$, we need the following additional condition:
\begin{enumerate}
\item[(D2)] $\frac{\phi^*_1(t,x)}{\phi^*_2(t,x)}\ge\max\left\{\frac{a_{12}(t,x)}{a_{11}(t,x)},\frac{a_{22}(t,x)}{a_{21}(t,x)}\right\},\quad \forall (t,x)\in\mathbb{R}\times\mathbb{R}$.
\end{enumerate}
Now we are in a position to show that system \eqref{NModel} admits a single rightward spreading speed $\bar{c}_+$, which is linearly determinate.

\begin{theorem}\label{c0}
Let (H1)--(H3) and (D1)--(D2) hold. Then $\bar{c}_+=c^*_+=c^0_+=\inf_{\mu>0}\frac{\lambda_0(\mu)}{\mu}$.
\end{theorem}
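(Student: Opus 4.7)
The plan is to combine the already-established lower bound $c^*_+\ge c^0_+$ from Proposition \ref{lb} with a matching upper bound $\bar{c}_+\le c^0_+$ obtained by a linear-type upper solution. Since $c^*_+\le \bar{c}_+$ always holds by definition, this chain forces all three quantities to coincide with $\inf_{\mu>0}\lambda_0(\mu)/\mu$.

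First, I would invoke Proposition \ref{ef}: under (H1)--(H3) and (D1), $\lambda_0(\mu_0)$ admits a strictly positive periodic eigenfunction $\phi^*=(\phi^*_1,\phi^*_2)$, and hence for any constant $M>0$ the function
\begin{equation*}
U(t,x):=Me^{-\mu_0(x-c^0_+t)}\phi^*(t,x)=Me^{-\mu_0x}e^{\lambda_0(\mu_0)t}\phi^*(t,x)
\end{equation*}
is a positive classical solution of the linearized system \eqref{linear}. The crucial step is to show that $U$ is in fact an upper solution of the nonlinear system \eqref{NModel}. Subtracting the nonlinear right-hand side from the linear one, the residual in the $v_1$-equation equals $U_1(a_{11}U_1-a_{12}U_2)=Me^{-\mu_0(x-c^0_+t)}U_1(a_{11}\phi^*_1-a_{12}\phi^*_2)$, while the residual in the $v_2$-equation equals $U_2(a_{21}U_1-a_{22}U_2)=Me^{-\mu_0(x-c^0_+t)}U_2(a_{21}\phi^*_1-a_{22}\phi^*_2)$. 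Both residuals are nonnegative precisely when $\phi^*_1/\phi^*_2\ge a_{12}/a_{11}$ and $\phi^*_1/\phi^*_2\ge a_{22}/a_{21}$, which is exactly condition (D2). Hence $U$ is a supersolution.

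Next, since $U$ blows up as $x\to-\infty$, I would cap it by $\beta(t,x)$ and set $\overline{u}(t,x):=\min\{U(t,x),\beta(t,x)\}$; a routine verification (analogous to the construction of $(\overline{u}_1,\overline{u}_2)$ in the proof of Theorem \ref{Qspreading}) shows $\overline{u}$ remains an upper solution of \eqref{NModel}. Now for any $\phi\in\mathcal{C}_{\beta(0,\cdot)}$ vanishing for $x\ge x_0$ and with $\phi\le\varpi\ll\beta(0,\cdot)$, I can choose $M$ large enough (depending on $\phi$, $\phi^*$ and $x_0$) so that $\phi(x)\le\overline{u}(0,x)$ for all $x\in\mathbb{R}$. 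The comparison principle for the cooperative system \eqref{NModel} then gives
\begin{equation*}
0\le u(t,x,\phi)\le\overline{u}(t,x)\le Me^{-\mu_0(x-c^0_+t)}\phi^*(t,x),\qquad t\ge0,\ x\in\mathbb{R}.
\end{equation*}
Fixing $c>c^0_+$, the right-hand side decays like $e^{-\mu_0(c-c^0_+)t}\|\phi^*\|_\infty$ uniformly for $x\ge ct$, so $\lim_{t\to\infty,\,x\ge ct}u(t,x,\phi)=0$. By the characterization of $\bar{c}_+$ (Theorem \ref{m-SS}(i) gives the spreading speed bound), no traveling wave connecting $\beta$ to $0$ can have speed less than $\bar{c}_+$, and the spreading estimate above shows $c$ is an admissible speed; hence $\bar{c}_+\le c$. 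Letting $c\downarrow c^0_+$ yields $\bar{c}_+\le c^0_+$.

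Combining with Proposition \ref{lb} and the trivial relation $c^*_+\le\bar{c}_+$, we conclude $c^0_+\le c^*_+\le\bar{c}_+\le c^0_+$, so all three quantities coincide with $\inf_{\mu>0}\lambda_0(\mu)/\mu$. The main obstacle is the supersolution verification: the cooperative rewriting produces the cross term $a_{21}u^*_2 v_1$ and the nonlinear loss $-a_{22}v^2_2$ in the second equation, so the cancellation leading to the clean inequality $U_2(a_{21}\phi^*_1-a_{22}\phi^*_2)\ge 0$ must be carried out carefully, and only then does the role of condition (D2) become visible as precisely the algebraic constraint needed to keep the linearized solution dominating the nonlinear one.
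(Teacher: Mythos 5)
Your upper-solution construction is exactly the paper's: Proposition \ref{ef} supplies the positive periodic eigenfunction $\phi^*$ at $\mu_0$, the residuals come out as $a_{11}U_1U_2(\phi^*_1/\phi^*_2-a_{12}/a_{11})$ and $a_{21}U_2^2(\phi^*_1/\phi^*_2-a_{22}/a_{21})$, and (D2) is precisely what makes both nonnegative. (The paper feeds $U$ itself, uncapped, into the comparison principle; your extra step of replacing $U$ by $\min\{U,\beta\}$ is harmless but not what the paper does.) The computation and the role of (D2) are correctly identified.

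Where your argument genuinely breaks is the last step, \emph{"the spreading estimate above shows $c$ is an admissible speed; hence $\bar{c}_+\le c$."} This does not follow from the result you cite. Theorem \ref{m-SS}(i) only says that \emph{if} $c>\bar{c}_+$ \emph{then} solutions from compactly supported data vanish on $\{x\ge ct\}$; having verified the vanishing for some $c>c^0_+$ tells you nothing about $\bar{c}_+$ in that direction. What forces a bound is Theorem \ref{m-SS}(ii), which is stated in terms of $c^*_+$ (not $\bar{c}_+$): if $c^0_+<c<c^*_+$, the initial datum $\phi^0$ would have to converge to $\beta$ on $\{x\le ct\}$, while the supersolution bound drives it to zero along $x=ct$ — contradiction, so $c^*_+\le c^0_+$. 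You cannot upgrade this to a direct bound $\bar{c}_+\le c^0_+$ unless you invoke Theorem \ref{Qspreading}(ii), but that theorem needs (H4)--(H5), which are not hypotheses of Theorem \ref{c0} and which you have not deduced from (D1)--(D2). The paper instead closes the circle as follows: once $c^*_+\le c^0_+$ is combined with Proposition \ref{lb}'s $c^*_+\ge c^0_+$ to give $c^*_+=c^0_+$, it revisits the proof of Proposition \ref{lb}, which shows that the case $\bar{c}_+>c^*_+$ would entail the \emph{strict} inequality $c^*_+\ge c^*_{1+}>c^0_+$, contradicting $c^*_+=c^0_+$. Hence $\bar{c}_+=c^*_+$. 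You need this case-splitting step (or an independent verification of (H4)--(H5)) to conclude; as written, your chain $c^*_+\le\bar{c}_+\le c^0_+$ is unsupported in its second inequality.
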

\begin{proof}
 First, we verify that $U(t,x)$, as defined in \eqref{eqU}, is an upper solution of system \eqref{NModel}. Since $\frac{U_1}{U_2}=\frac{\phi^*_1}{\phi^*_2}$ and (D2) holds true, it follows that
\begin{eqnarray}
& &\frac{\partial U_1}{\partial t}\!-\!L_1 U_1-U_1(b_1(t,x)-a_{12}(t,x)u^*_2(t,x)-a_{11}(t,x)U_1+a_{12}(t,x)U_2)\nonumber\\
& &=a_{11}(t,x)U_1U_2\left(\frac{U_1}{U_2}-\frac{a_{12}(t,x)}{a_{11}(t,x)}\right)\nonumber\\
& &=a_{11}(t,x)U_1U_2\left(\frac{\phi^*_1(t,x)}{\phi^*_2(t,x)}-\frac{a_{12}(t,x)}{a_{11}(t,x)}\right)\ge 0,
\end{eqnarray}
and
\begin{eqnarray}
& &\frac{\partial U_2}{\partial t}\!-\!L_2U_2\!-\!a_{21}(t,x)U_1(u^*_2(t,x)\!-\!U_2)
-\!U_2(b_2(t,x)\!-\!2a_{22}(t,x)u^*_2(t,x)\!+\!a_{22}(t,x)U_2).\nonumber\\
& &=a_{21}(t,x)U_2^{^2}\left(\frac{U_1}{U_2}-\frac{a_{22}(t,x)}{a_{21}(t,x)}\right)\nonumber\\
& &=a_{21}(t,x)U_2^{^2}\left(\frac{\phi^*_1(t,x)}{\phi^*_2(t,x)}-\frac{a_{22}(t,x)}{a_{21}(t,x)}\right)\ge0.
\end{eqnarray}
Thus, $U(t,x)$ is an upper solution of \eqref{NModel}. Choose some $\phi^0\in\mathcal{C}_{\beta(0,\cdot)}$ satisfying the conditions in Theorem \ref{Qspreading} (i) and (ii). Then there exists a sufficiently large number $M_0>0$ such that
$$0\le\phi^0(x)\le M_0e^{-\mu_0x}\phi^*(0,x)=U(0,x),\quad \forall x\in\mathbb{R}.$$
Let $W(t,x)$ be the unique solution of system \eqref{NModel} with $W(0,\cdot)=\phi^0$. Then the comparison principle, together with the fact that $c^0_+\mu_0=\lambda_0(\mu_0)$, leads that
$$0\!\le\! W(t,x)\!\le\! U(t,x)\!=\!M_0e^{-\mu_0x}e^{\lambda_0(\mu_0)t}\phi^*(t,x)\!=\!M_0e^{-\mu_0(x-c^0_+t)}\phi^*(t,x),\quad \forall t\ge0,\ x\in\mathbb{R}.$$
It follows that for any given $\epsilon>0$, there holds
$$0\le W(t,x)\le M_0e^{-\mu_0\epsilon t}\phi^*(t,x),\quad \forall t\ge0,\ x\ge(c^0_++\epsilon)t,$$
and hence,
$$\lim_{t\rightarrow\infty,x\ge (c^0_++\epsilon)t}W(t,x)=0.$$
By Theorem \ref{Qspreading} (ii), we obtain $c^*_+\le c^0_++\epsilon$. Letting $\epsilon\rightarrow 0$, we have $c^*_+\le c^0_+$. In the case that $\bar{c}_+>c^*_+$, the proof of Proposition \ref{lb} shows that $c^*_+>c^0_+$, a contradiction. This implies that $\bar{c}_+=c^*_+=c^0_+$.
\end{proof}
To finish this section, we consider the following time-periodic Lotka--Volterra competition model \cite{ZR}: \begin{eqnarray}\label{VL2}
& &\frac{\partial u_1}{\partial t}=\frac{\partial ^2u_1}{\partial x^2}+u_1(b_1(t)-a_{11}(t)u_1-a_{12}(t)u_2), \\
& &\frac{\partial u_2}{\partial t}=d\frac{\partial ^2u_2}{\partial x^2}+u_2(b_2(t)-a_{21}(t)u_1-a_{22}(t)u_2),\quad t>0, \ x\in\mathbb{R}. \nonumber
\end{eqnarray}
Here $d>0$ and all other coefficient functions are positive and $\omega$-periodic in $t$.

For convenience, define $\overline{w}=\frac{1}{\omega}\int_{0}^{\omega}w(t)dt$ with any $\omega$-periodic function $w(t)$. We first make the following assumption (see (A2) in \cite{ZR}):
\begin{enumerate}
	\item [(P1)] $\overline{b}_1>\max\limits_{t\in[0,\omega]}\frac{a_{12}(t)}{{a}_{22}(t)}\cdot\overline{b}_2>0$, and $0<\overline{b}_2\le\max\limits_{t\in[0,\omega]}\frac{ a_{21}(t)}{a_{11}(t)}\cdot\overline b_1.$

\end{enumerate}
It is easy to see that if (P1) holds, then Proposition \ref{c}  implies that (H3) is valid. A straightforward computation shows that $\lambda(1,0,b_1)=\overline{b}_1>0$ and $\lambda(1,0,b_2)=\overline{b}_2>0$. Thus,  (H1) holds true and system \eqref{VL2} admits three time periodic solutions $(0,0)$, $(u^*_1(t),0)$, and $(0,u^*_2(t))$. Moreover, we can show that $$\lambda(1,0,b_1-a_{12}u^*_2)=\overline{b_1-a_{12}u^*_2}\ge\overline{b}_1-\max\limits_{t\in[0,\omega]}\frac{a_{12}(t)}{{a}_{22}(t)}\cdot\overline{a_{22}u_2^*}=\overline{b}_1-\max\limits_{t\in[0,\omega]}\frac{a_{12}(t)}{{a}_{22}(t)}\cdot\overline{b_2}>0,$$
 and hence, (H2) ia also valid. (H4) and (H5) are automatically satisfied since all coefficient functions are independent of $x$ (treated as even functions of $x$). It then follows that system \eqref{VL2} admits a single spreading speed (also the minimal wave speed) $\bar{c}_+$ no matter whether it is linearly determinate.
Next, we make another assumption (see \cite[Theorem 2.5]{ZR}):
\begin{enumerate}
	\item [(P2)] $0<d\le1$, $a_{11}(t)u^*_1(t)-a_{12}(t)u^*_2(t)\ge a_{21}(t)u^*_1(t)-a_{22}(t)u^*_2(t)\ge0, \forall t\in\mathbb{R}.$
	\end{enumerate}
In what follows, we show that (P2) is sufficient for (D1) and (D2) to hold.
Clearly, $\lambda_0(\mu)$ and $\overline{\lambda}(\mu)$ become the principal eigenvalues of the following  periodic eigenvalue problems:
\begin{eqnarray}\label{ep0}
& & \lambda \psi=-\frac{d\psi}{d t}+(\mu^2+b_1(t)-a_{12}(t)u^*_2(t))\psi,\quad t\in\mathbb{R},\nonumber\\
& &\psi(t+\omega)=\psi(t),\quad t\in\mathbb{R},\nonumber
\end{eqnarray}
and
\begin{eqnarray}\label{pep2}
& &\lambda \psi=-\frac{d\psi}{d t}+\left(d\mu^2+b_2(t)-2a_{22}(t)u^*_2(t)\right)\psi,\quad  t\in\mathbb{R},\nonumber\\
& &\psi(t+\omega)=\psi(t),\quad t\in\mathbb{R},
\end{eqnarray}
respectively.
It is easy to see that
$$\lambda_0(\mu)=\mu^2+\overline{b_1-{a}_{12}u^*_2},\quad\overline{\lambda}(\mu)=d\mu^2+\overline{b_2-2{a}_{22}u^*_2}.$$
By virtue of  (P2) and
$$
c^0_+=\inf\limits_{\mu>0}\frac{\lambda_0(\mu)}{\mu}=\inf\limits_{\mu>0}\left\{\mu+\frac{\overline{b_1-a_{12}u^*_2}}{\mu}\right\},
$$
it follows that $$c^0_+=2\sqrt{\overline{b_1-a_{12}u^*_2}}>0,\ \mu_0=\sqrt{\overline{b_1-a_{12}u^*_2}}.$$
We then see from (P2) that (D1) holds true.

Let $(\phi_1(t),\phi_2(t))$ be a positive  eigenfunction, associated with  $\lambda_0(\mu_0)$, of the following eigenvalue problem:
\begin{align}\label{pep}
\lambda \phi_1&=-\frac{d\phi_1}{d t}+(\mu_0^2+b_1(t)-a_{12}(t)u^*_2(t))\phi_1,\quad t\in\mathbb{R},\nonumber\\
\lambda \phi_2&=-\frac{d\phi_2}{dt}+a_{21}(t)u^*_2(t)\phi_1+\left(d\mu_0^2+b_2(t)-2a_{22}(t)u^*_2(t)\right)\phi_2,\quad  t\in\mathbb{R},\nonumber\\
\phi_i(t+\omega)&=\phi_i(t),\quad t\in\mathbb{R},\ i=1,2.
\end{align}
Next we verify that $\phi_2(t)\le\frac{u^*_2(t)}{u^*_1(t)}\phi_1(t):=v(t),\forall t\in\mathbb{R}$.  Note that
\begin{align*}
& -\frac{dv}{dt}+a_{21}(t)u^*_2(t)\phi_1(t)+(d\mu_0^2+b_2(t)-2a_{22}(t)u^*_2(t)-2\mu^2_0)v\\
& =\frac{u^*_2(t)\phi_1(t)}{u^*_1(t)}\left[-\frac{u^*_1(t)}{u^*_2(t)}\left(\frac{u^*_2(t)}{u^*_1(t)}\right)^\prime-\frac{\phi_1^\prime(t)}{\phi_1(t)}+a_{21}(t)u^*_1(t)+d\mu_0^2\right.\\&\left.+b_2(t)-2a_{22}(t)u^*_2(t)-2\mu^2_0\right]\\
&=\frac{u^*_2(t)\phi_1(t)}{u^*_1(t)}\left[-\frac{u^*_1(t)}{u^*_2(t)}\left(\frac{u^*_2(t)}{u^*_1(t)}\right)^\prime+b_2(t)-b_1(t)-a_{22}(t)u^*_2(t)+a_{11}(t)u^*_1(t)\right.\\
&\left.+(d-1)\mu_0^2-a_{11}(t)u^*_1(t)+a_{12}(t)u^*_2(t)+a_{21}(t)u^*_1(t)-a_{22}(t)u^*_2(t)\right]\\
&\le\frac{u^*_2(t)\phi_1(t)}{u^*_1(t)}\left[-\frac{u^*_1(t)}{u^*_2(t)}\left(\frac{u^*_2(t)}{u^*_1(t)}\right)^\prime+b_2(t)-b_1(t)-a_{22}(t)u^*_2(t)+a_{11}(t)u^*_1(t)\right]\\
&=0.
\end{align*}
In view of the comparison principle and
the periodicity of $\phi_2(t)$ and $v(t)$,
it then suffices to show that  $\phi_2(t_0)\leq v(t_0)$ for some $t_0\in \mathbb{R}$.  Assume, by contradiction, that
$\phi_2(t)>v(t),\, \forall t\in \mathbb{R}$.
Thus, we have  $w(t):=v(t)-\phi_2(t)<0$, and
$$\frac{dw(t)}{dt}+\left(2\mu^2_0-d\mu_0^2-b_2(t)+2a_{22}(t)u^*_2(t)\right)w(t)\ge0, \, \, \forall t\in \mathbb{R}.$$
This implies that
$$\int_{0}^{\omega}(2\mu^2_0-d\mu_0^2-b_2(t)+2a_{22}(t)u^*_2(t))dt\le 0.$$
On the other hand,
we know that $\overline{b}_2=\overline{a_{22}u^*_2}$, and
$$\frac{1}{\omega}\int_{0}^{\omega}(2\mu^2_0-d\mu_0^2-b_2(t)+2a_{22}(t)u^*_2(t))dt=(2-d)\mu^2_0+\overline{a_{22}u^*_2}>0,$$
which leads to a contradiction.
This shows that $\frac{\phi_1(t)}{\phi_2(t)}
\ge\frac{u^*_1(t)}{u^*_2(t)}, \,
\forall t\in \mathbb{R}$.
It then follows from (P2) that
$$\frac{\phi_1(t)}{\phi_2(t)}\ge\frac{u^*_1(t)}{u^*_2(t)}\ge\max\left\{\frac{a_{12}(t)}{a_{11}(t)},\frac{a_{22}(t)}{a_{21}(t)}\right\},\quad \forall t\in\mathbb{R},$$
which implies  that (D2) is also valid. Therefore, if (P1) and (P2) hold, then
the spreading speed $\bar{c}_+$ is linearly determinate, equal to $2\sqrt{\overline{b_1-a_{12}u^*_2}}$.
\begin{remark}
Consider a more general reaction-diffusion competition system in a periodic habitat, that is,
\begin{eqnarray}\label{GVL}
   & &\frac{\partial u_1}{\partial t}=L_1u_1+u_1f_1(t,x,u_1,u_2),\\
   & &\frac{\partial u_2}{\partial t}=L_2u_2+u_2f_2(t,x,u_1,u_2),\quad t\in\mathbb{R}, \ x\in\mathbb{R}, \nonumber
   \end{eqnarray}
where the operator $L_i:=a^{(i)}_2(t,x)\frac{\partial^2}{\partial x^2}+a^{(i)}_1(t,x)\frac{\partial}{\partial x}$ with $a^{(i)}_2(t,x)>0,\forall (t,x)\in \mathbb{R}\times\mathbb{R}$, i.e., $L_i$ is uniformly elliptic, $i=1,2$. Assume that $a^{(i)}_j(t,x)$ and $f_i(t,x,u_1,u_2)$ are periodic in $t$ and $x$ with the same periods, respectively, H\"{o}lder continuous in $x$ of order $\nu\in(0,1)$ and in $t$ of order $\frac{\nu}{2}$, $1\leq i,j\leq 2$, and $f_i(t,x,u_1,u_2)$ are differentiable with respect to $u_1$ and $u_2$, $i=1,2$. Moreover, $\partial_{u_1}f_1(t,x,u_1,0)<0$ and   $\partial_{u_2}f_2(t,x,0,u_2)<0$, $\forall (t,x)\in\mathbb{R}\times\mathbb{R}$, $u_1\in\R_+$, $u_2\in\R_+$, and there exist $M_1>0$ and $M_2>0$ such that $f_1(t,x,M_1,0)\le0$, $f_2(t,x,0,M_2)\le 0$, $\partial_{u_2}f_1(t,x,u_1,u_2)<0$ and $\partial_{u_1}f_2(t,x,u_1,u_2)<0$ for all $(t,x,u_1,u_2)\in\mathbb{R}\times\mathbb{R}\times[0,M_1]\times[0,M_2]$. Then we can obtain analogous results on traveling waves and spreading speeds under similar assumptions to (H1)--(H5)
and (D1)--(D2).

\end{remark}
\subsection{An example}
In this section, we study the time periodic version of a well-known reaction diffusion model \cite{HMP}:
 \begin{eqnarray}\label{dhmp}
   & &\frac{\partial u_1}{\partial t}=d_1 \Delta u_1+u_1(a_\omega(t,x)-u_1-u_2), \\
   & &\frac{\partial u_2}{\partial t}=d_2\Delta u_2+u_2(a_\omega(t,x)-u_1-u_2),\quad t>0, \ x\in\mathbb{R}, \nonumber
   \end{eqnarray}
where $0<d_1<d_2$, $a_\omega(t,x)=a(t/\omega,x)$ and $a(t,x)$ is a continuous function on  $\mathbb{R}\times\mathbb{R}$ and it is $1$-periodic in $t$ and $L$-periodic in $x$.

For convenience, we use the same notations as in sections 2 and 3.  We first present some results on the principle eigenvalue $\lambda_m(\mu)$ of \eqref{geep}.
\begin{lemma}\label{mp}
Assume that time and space periodic functions $d,g,m\in C^{\frac{\nu}{2},\nu}(\mathbb{R}\times\mathbb{R})(\nu\in(0,1))$. Let $\lambda_m(\mu)(\mu\in\mathbb{R})$ be the principle eigenvalue of the following parabolic eigenvalue problem:
\begin{eqnarray}\label{geep}
& & \lambda \psi=-\frac{\partial\psi}{\partial t}+d(t,x)\frac{\partial^2\psi}{\partial x^2}-(2\mu d(t,x)+g(t,x))\frac{\partial\psi}{\partial x}\nonumber\\
& &+(d(t,x)\mu^2+g(t,x)\mu+m(t,x))\psi,\quad (t,x)\in\mathbb{R}\times\mathbb{R},\\
& &\psi(t,x+L)=\psi(t,x),\ \psi(t+\omega,x)=\psi(t,x),\quad (t,x)\in\mathbb{R}\times\mathbb{R}.\nonumber
\end{eqnarray}
Then the following statements are valid:
\begin{enumerate}
\item[(a)]If $m_1(t,x)\ge m_2(t,x)$ with $m_1(t,x)\not\equiv m_2(t,x),\forall (t,x)\in[0,\omega]\times[0,L]$, then $\lambda_{m_1}(\mu)>\lambda_{m_2}(\mu)$, $\forall\mu\in\mathbb{R}$.
\item[(b)] $\lambda_m(\mu)$ is a convex function of $\mu$ on $\mathbb{R}$.
\item[(c)]If either $d,m$ are even in $x$ and $g$ is odd in $x$, or $d\in C^{\frac{\nu}{2},1+\nu}(\mathbb{R}\times\mathbb{R})(\nu\in(0,1))$ and $g(t,x)=-\frac{\partial d(t,x)}{\partial x},\forall (t,x)\in\mathbb{R}\times\mathbb{R}$ and $d,m$ are even in $t$, then $\lambda_m(\mu)=\lambda_m(-\mu), \forall \mu\in\mathbb{R}$.
\end{enumerate}
\end{lemma}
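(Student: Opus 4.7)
The plan is to handle the three assertions separately, in each case exploiting the Krein--Rutman theory (applied to the Poincar\'e map of the linear parabolic equation associated with $\mathcal L_\mu:=-\partial_t+d\partial_{xx}-(2\mu d+g)\partial_x+(d\mu^2+g\mu+m)$) together with the sub/super\-solution characterization of the principal eigenvalue
\[
\lambda_m(\mu)=\inf\Big\{\lambda\in\R:\exists\,\phi>0\text{ continuous periodic with }\mathcal L_\mu\phi\le\lambda\phi\Big\}.
\]

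\emph{Part (a).} I identify $e^{\omega\lambda_m(\mu)}$ with the spectral radius of the Poincar\'e map $\mathcal P^{(m)}_\mu$ of the parabolic equation whose generator is $-\mathcal L_\mu+$ (constant shift). This map is compact and strongly positive on a Banach lattice of $L$-periodic functions. If $m_1\ge m_2$ with strict inequality somewhere, the strong parabolic maximum principle gives $\mathcal P^{(m_1)}_\mu\phi\gg \mathcal P^{(m_2)}_\mu\phi$ for every $\phi>0$ periodic, and the Krein--Rutman strict monotonicity of spectral radii yields $\lambda_{m_1}(\mu)>\lambda_{m_2}(\mu)$.

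\emph{Part (b).} Let $\phi_i>0$ be the principal eigenfunction at $\mu_i$, $i=1,2$, and for $\theta\in(0,1)$ set $\mu_\theta:=\theta\mu_1+(1-\theta)\mu_2$ and $\phi_\theta:=\phi_1^\theta\phi_2^{1-\theta}$. Writing $p_i:=\partial_x\log\phi_i$ and using $\partial_t\log\phi_\theta=\theta\partial_t\log\phi_1+(1-\theta)\partial_t\log\phi_2$ together with the chain-rule identity $\partial_{xx}\log\phi_\theta=\theta\partial_{xx}\log\phi_1+(1-\theta)\partial_{xx}\log\phi_2-\theta(1-\theta)(p_1-p_2)^2$, a direct computation collapses to the pointwise identity
\[
\theta\lambda_m(\mu_1)+(1-\theta)\lambda_m(\mu_2)-\frac{\mathcal L_{\mu_\theta}\phi_\theta}{\phi_\theta}=d(t,x)\,\theta(1-\theta)\Big[(\mu_1-\mu_2)-(p_1-p_2)\Big]^2\ge 0,
\]
with the perfect square assembling out of the $\theta(1-\theta)$-contributions from $\partial_{xx}\log\phi_\theta$, the mixed $2\mu d\,\partial_x$-term, and the $d\mu^2$-term. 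Multiplying by $\phi_\theta>0$ and invoking the above characterization gives $\lambda_m(\mu_\theta)\le\theta\lambda_m(\mu_1)+(1-\theta)\lambda_m(\mu_2)$.

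\emph{Part (c).} I treat the two hypotheses separately. If $d,m$ are even in $x$ and $g$ is odd in $x$, then given the positive periodic eigenfunction $\phi$ at $\mu$ I set $\tilde\phi(t,x):=\phi(t,-x)$; substituting into the eigenvalue equation and using these parities immediately shows $\mathcal L_{-\mu}\tilde\phi=\lambda_m(\mu)\tilde\phi$, and simplicity of the principal eigenvalue gives $\lambda_m(-\mu)=\lambda_m(\mu)$. If instead $g=-\partial_x d$ and $d,m$ are even in $t$, I compute the formal $L^2$-adjoint $\mathcal L_\mu^*$ on $(t,x)$-periodic functions; the key algebraic observation (after cancellations using $g_x=-d_{xx}$) is that
\[
\mathcal L_\mu^*=\partial_t+d\partial_{xx}+(d_x+2\mu d)\partial_x+(\mu d_x+d\mu^2+m)\quad\text{and}\quad\mathcal L_{-\mu}=-\partial_t+d\partial_{xx}+(d_x+2\mu d)\partial_x+(\mu d_x+d\mu^2+m)
\]
have identical spatial parts and opposite signs on $\partial_t$. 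Since the adjoint Poincar\'e map has the same spectral radius as the primal one, $\lambda_m(\mu)$ is also the principal eigenvalue of $\mathcal L_\mu^*$, with a positive periodic eigenfunction $\psi$; then $\tilde\psi(t,x):=\psi(-t,x)$ is positive, still $\omega$-periodic, and thanks to the $t$-evenness of $d$ and $m$ the sign reversal of $\partial_t$ matches exactly, so $\mathcal L_{-\mu}\tilde\psi=\lambda_m(\mu)\tilde\psi$ and again $\lambda_m(-\mu)=\lambda_m(\mu)$.

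The main obstacle is the bookkeeping in the second sub-case of (c): one must carry out the adjoint computation carefully, use the hypothesis $g=-d_x$ to cancel the two unwanted $d_{xx}$-terms, and justify that the primal and adjoint principal eigenvalues coincide (which follows from Krein--Rutman applied to the compact strongly positive Poincar\'e map and its dual). Part (b) is conceptually routine once the perfect-square identity is written down; part (a) is essentially standard.
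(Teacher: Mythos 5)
Your proposal is correct and follows essentially the same route as the paper. For parts (a) and (b) the paper merely cites \cite{Hess} and \cite{Liang}, so I verified your arguments independently: the Krein--Rutman strict-monotonicity argument for (a) is the standard one, and your log-convexity computation for (b) is correct. Writing $w_i=\log\phi_i$ and $q_i=\partial_x w_i-\mu_i$, one has $\mathcal L_{\mu}\psi/\psi=-w_t+dw_{xx}+dq^2-gq+m$, so for $\phi_\theta=\phi_1^\theta\phi_2^{1-\theta}$ and $\mu_\theta=\theta\mu_1+(1-\theta)\mu_2$ the extra term is exactly $d\,[\theta q_1^2+(1-\theta)q_2^2-(\theta q_1+(1-\theta)q_2)^2]=d\,\theta(1-\theta)(q_1-q_2)^2$, which is your perfect square since $q_1-q_2=(p_1-p_2)-(\mu_1-\mu_2)$; as $d\ge\alpha_0>0$, this gives a periodic positive supersolution at level $\theta\lambda_m(\mu_1)+(1-\theta)\lambda_m(\mu_2)$ and hence convexity. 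For part (c), your first sub-case (reflection $x\mapsto-x$) is identical to the paper's. In the second sub-case the paper takes the $\mathcal L_\mu$-eigenfunction $\varphi$, time-reverses to $\psi(t,x)=\varphi(-t,x)$ (which then solves the formal adjoint of the $\mathcal L_{-\mu}$-equation), and pairs with the $\mathcal L_{-\mu}$-eigenfunction $\phi$ via integration by parts; you instead start from the adjoint eigenfunction of $\mathcal L_\mu$ and time-reverse it into a genuine $\mathcal L_{-\mu}$-eigenfunction, invoking $r(T)=r(T^*)$ for the Poincar\'e map. These are the same underlying adjoint-plus-time-reversal duality, traversed in opposite order; your variant replaces the paper's integration-by-parts pairing by a prior appeal to the equality of primal and adjoint spectral radii. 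Both work, and your adjoint computation $\mathcal L_\mu^*=\partial_t+d\partial_{xx}+(d_x+2\mu d)\partial_x+(\mu d_x+d\mu^2+m)$ (using $g=-d_x$) is correct.
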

\begin{proof}
By similar arguments to those in \cite[Lemma 15.5]{Hess} 
, it is easy to prove that (a) holds. (b) follows from the same arguments as in  \cite{Liang}.

In the case where $d,m$  are even functions in $x$ and $g$ is odd in $x$. Let $\psi(t,x) $ be eigenfunction
associated with $\lambda_m (\mu)$ .
Set $\phi(t,x)=\psi(t,-x)$, we then have \begin{eqnarray*}
& & \lambda \phi=-\frac{\partial\phi}{\partial t}+d(t,-x)\frac{\partial^2\phi}{\partial x^2}+(2\mu d(t,-x)+g(t,-x))\frac{\partial\phi}{\partial x}\nonumber\\
& &+(d(t,-x)\mu^2+g(t,-x)\mu+m(t,-x))\phi,\quad (t,x)\in\mathbb{R}\times\mathbb{R}.
\end{eqnarray*} Since $d(t,x)=d(t,-x), m(t,x)=m(t,-x), g(t,x)=-g(t,-x), \forall (t,x)\in\mathbb{R}\times\mathbb{R}$,
we obtain
\begin{eqnarray*}
	& & \lambda \phi=-\frac{\partial\phi}{\partial t}+d(t,x)\frac{\partial^2\phi}{\partial x^2}+(2\mu d(t,x)-g(t,x))\frac{\partial\phi}{\partial x}\nonumber\\
	& &+(d(t,x)\mu^2-g(t,x)\mu+m(t,x))\phi,\quad (t,x)\in\mathbb{R}\times\mathbb{R}.
\end{eqnarray*}
By the uniqueness of the principal eigenvalue, it follows that  $\lambda_m(-\mu)=\lambda_m(\mu), \forall \mu\in\mathbb{R}$.

 In the case where $d\in C^{\frac{\nu}{2},1+\nu}(\mathbb{R}\times\mathbb{R})(\nu\in(0,1))$ and $g(t,x)=-\frac{\partial d(t,x)}{\partial x},\forall (t,x)\in\mathbb{R}\times\mathbb{R}$, $d$, $m$ is even in $t$, for any given $\mu\in\mathbb{R}$, it is easy to see that  $\lambda_m(\mu)$ is also the principle eigenvalue of \begin{eqnarray}\label{geep2}
 & & \lambda \psi=\frac{\partial\psi}{\partial t}+\frac{\partial }{\partial x}\left(d(t,x)\frac{\partial\psi}{\partial x}\right)-2\mu d(t,x)\frac{\partial\psi}{\partial x}\nonumber\\
 & &+(d(t,x)\mu^2-\frac{\partial d(t,x)}{\partial x}\mu+m(t,x))\psi,\quad (t,x)\in\mathbb{R}\times\mathbb{R},\\
 & &\psi(t,x+L)=\psi(t,x),\ \psi(t+\omega,x)=\psi(t,x),\quad
  (t,x)\in\mathbb{R}\times\mathbb{R}.\nonumber
  \end{eqnarray}
 Let $\varphi(t,x)$ and $\phi(t,x)$ be the positive periodic eigenfunctions
associated with  $\lambda_m(\mu)$ and $\lambda_m(-\mu)$, respectively, and $\psi(t,x)=\varphi(-t,x), \forall (t,x)\in\mathbb{R}\times\mathbb{R}$. Note that $d,m$ are even in $t$, so is $\frac{\partial d(t,x)}{\partial x}$, it then follows that,
$$
\lambda_m(\mu) \psi=\frac{\partial\psi}{\partial t}+\frac{\partial }{\partial x}\left(d(t,x)\frac{\partial\psi}{\partial x}\right)-2\mu d(t,x)\frac{\partial\psi}{\partial x}+(d(t,x)\mu^2-\frac{\partial d(t,x)}{\partial x}\mu+m(t,x))\psi
$$
and
$$
\lambda_m(-\mu) \phi=-\frac{\partial\phi}{\partial t}+\frac{\partial }{\partial x}\left(d(t,x)\frac{\partial\phi}{\partial x}\right)+2\mu d(t,x)\frac{\partial\phi}{\partial x}+(d(t,x)\mu^2+\frac{\partial d(t,x)}{\partial x}\mu+m(t,x))\phi
$$
Using integration by parts, we have $$\int^{\omega}_{0}\int^{L}_{0}\frac{\partial\psi(t,x)}{\partial t}\phi(t,x) dxdt=-\int^{\omega}_{0}\int^{L}_{0}\frac{\partial \phi(t,x)}{\partial t}\psi(t,x)dxdt,$$
$$\int^{\omega}_{0}\int^{L}_{0}\frac{\partial}{\partial x}\left(d(t,x)\frac{\partial \psi(t,x)}{\partial x}\right)\phi(t,x) dxdt=\int^{\omega}_{0}\int^{L}_{0}\frac{\partial}{\partial x}\left(d(t,x)\frac{\partial\phi(t,x)}{\partial x}\right)\psi(t,x)dxdt,$$ and
\begin{small}\begin{align*}-&\mu\int^\omega_0\int^L_0\left[2d(t,x)\frac{\partial\psi(t,x)}{\partial x}\phi(t,x)+\frac{\partial d(t,x)}{\partial x}\psi(t,x)\phi(t,x)\right]dxdt\\
&=\mu \int^{\omega}_0\int^L_0\left[2\frac{\partial d(t,x)\phi(t,x)}{\partial x}\psi(t,x)-\frac{\partial d(t,x)}{\partial x}\psi(t,x)\phi(t,x)\right]dxdt\\
&=\mu\int^\omega_0\int^L_0\left[2d(t,x)\frac{\partial\psi(t,x)}{\partial x}\phi(t,x)+\frac{\partial d(t,x)}{\partial x}\psi(t,x)\phi(t,x)\right]dxdt.
\end{align*}
\end{small}
It then follows that \begin{eqnarray}\label{22}
\lambda_m (\mu)\int_0^\omega\int_0^L\psi(t,x)\phi(t,x)dxdt=\lambda_m (-\mu)\int_0^\omega\int_0^L\phi(t,x)\psi(t,x)dxdt.
\end{eqnarray} Since $\int_0^\omega\int_0^L\phi(t,x)\psi(t,x)dxdt>0$, we have $\lambda_m(\mu)=\lambda_m(-\mu), \forall \mu\in\mathbb{R}$.
\end{proof}
\begin{lemma}\label{H45}
Assume that (H1) and (H2) hold. Then (H4) and (H5) are valid  provided that either all the coefficient functions of system \eqref{NModel} are even in $x$ except $g_i$ is odd in $x$, or all the cofficient functions of system \eqref{NModel} are independent of $t$,  $d_i\in C^{1+\nu}(\mathbb{R})(\nu\in(0,1))$ and $g_i(t,x)=-d_i'(x),\forall (t,x)\in\mathbb{R}\times\mathbb{R}, i=1,2.$
\end{lemma}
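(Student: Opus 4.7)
The plan is to extract both (H4) and (H5) from the symmetry $\lambda(\mu)=\lambda(-\mu)$ of the principal eigenvalue (Lemma \ref{mp}(c)) together with its convexity (Lemma \ref{mp}(b)). First I would verify, in each of the two scenarios, that every relevant parabolic eigenvalue problem satisfies the hypotheses of Lemma \ref{mp}(c). Under the assumption that all coefficients of \eqref{NModel} are even in $x$ except $g_i$ odd in $x$, the problems \eqref{eep} (defining $c^*_{1+}$) and the analogous one for \eqref{u2} fall under the first branch of Lemma \ref{mp}(c) immediately; for \eqref{eep2} I also need $b_2-a_{22}u^*_2$ to be even in $x$, which reduces to showing $u^*_2$ is even in $x$. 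The latter follows from the invariance of the governing equation under $x\mapsto -x$ combined with the uniqueness of its positive time-space periodic solution supplied by Proposition \ref{VLexistence}. Under the second assumption ($t$-independent coefficients with $g_i=-d_i'$), $u^*_2$ is stationary, so $b_2-a_{22}u^*_2$ is $t$-independent and hence trivially even in $t$, and the second branch of Lemma \ref{mp}(c) applies.

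With $\lambda(\mu)=\lambda(-\mu)$ secured, the leftward speed $c^*_{2-}$, being $\inf_{\mu>0}\lambda(-\mu)/\mu$, equals $c^*_{2+}$. Moreover, each of the eigenvalues governing $c^*_{1+}$ and $c^*_{2+}$ is convex and even in $\mu$, so is minimized at $\mu=0$, where it equals $\lambda(d_i,g_i,b_i)>0$ by (H1). Consequently $\lambda_i(\mu)/\mu$ is strictly positive on $(0,\infty)$ with $+\infty$ limits at both endpoints, so the infimum $c^*_{i+}>0$ for $i=1,2$. Therefore $c^*_{1+}+c^*_{2-}=c^*_{1+}+c^*_{2+}>0$, which is (H4).

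For (H5) the decisive observation is $\lambda_2(0)=0$: the function $u^*_2$ itself satisfies $-\partial_t u^*_2+L_2u^*_2+(b_2-a_{22}u^*_2)u^*_2=0$ and is positive and periodic, so it is the principal eigenfunction of \eqref{eep2} at $\mu=0$ with eigenvalue $0$. By Lemma \ref{mp}(b)-(c), $\lambda_2$ is convex and even in $\mu$, so $\lambda_2(\mu)\ge\lambda_2(0)=0$ with minimum attained at $\mu=0$. Since \eqref{eep2} depends polynomially on $\mu$ and its principal eigenvalue is simple (Krein--Rutman), analytic perturbation theory renders $\lambda_2$ smooth in $\mu$; combined with its evenness this forces $\lambda_2'(0)=0$. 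Hence $\limsup_{\mu\to 0^+}\lambda_2(\mu)/\mu=\lambda_2'(0)=0\le c^*_{1+}$, establishing (H5). The main delicate point I anticipate is the $x$-evenness of $u^*_2$ in the first scenario, which must be handled via the uniqueness clause of Proposition \ref{VLexistence} applied to $v(t,x):=u^*_2(t,-x)$; everything else is a clean interplay of convexity, symmetry, and the simplicity of the principal eigenvalue.
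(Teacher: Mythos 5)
Your proposal follows essentially the same route as the paper's own proof: derive evenness of the relevant principal eigenvalues from Lemma \ref{mp}(c) (including the intermediate step of proving $u^*_2$ even in $x$ via the uniqueness clause of Proposition \ref{VLexistence}), combine with convexity from Lemma \ref{mp}(b), and then read off (H4) from $\lambda_1(0)>0$ and (H5) from $\lambda_2(0)=0$ together with $\lambda_2'(0)=0$. The only genuine additions are cosmetic---phrasing (H4) through $c^*_{2-}=c^*_{2+}$ rather than directly arguing $c^*_{2-}>0$, and explicitly invoking analytic perturbation theory to justify differentiability of $\lambda_2$ at $\mu=0$, a point the paper asserts without comment---so the proof is correct and substantively identical to the one in the paper.
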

\begin{proof}
First, we prove that (H4) holds. Indeed, in either case, by Lemma \ref{mp}(c) with $m(t,x)=b_1(t,x)$ and $d(t,x)=d_1(t,x)$, it is easy to see that the principle $\lambda_1(\mu)$ of \eqref{eep} is an even function of $\mu$ on $\mathbb{R}$. Since $\lambda_1(\mu)$ is convex on $\mathbb{R}$ and $\lambda_1(0)>0$, we have $\lambda_1(\mu)>0,\forall \mu>0.$ It follows that $c_{1+}^*=\inf_{\mu>0}\frac{\lambda _1(\mu)}{\mu}>0$. Similarly, we can show that $c_{2-}^*>0$, this implies $c_{1+}^*+c_{2-}^*>0$.

To verify (H5), it suffices to show that $\lim_{\mu\to 0^+}\frac{\lambda _2(\mu)}{\mu}=0$, where $\lambda_2(\mu)$ is the principal eigenvalue of \eqref{eep2}. In the case where all the coefficient functions of \eqref{NModel} are even in $x$ except $g_i$ is odd in $x$, $i=1,2$, we have
$$\frac{\partial u^*_2}{\partial t}=d_2(t,x)\frac{\partial^2 u^*_2}{\partial x^2}+g_2(t,x)\frac{\partial u^*_2}{\partial x}+u^*_2(b_2(t,x)-a_{22}(t,x)u^*_2), \quad (t,x)\in\mathbb{R}\times\mathbb{R}.$$
Let $u^0_2(t,x)=u^*_2(t,-x)$. Since $d_2$, $b_2$, $a_{22}$ are even in $x$ and $g_2$ is odd in $x$, it follows that
$$\frac{\partial u^0_2}{\partial t}=d_2(t,x)\frac{\partial^2 u^0_2}{\partial x^2}+g_2(t,x)\frac{\partial u^0_2}{\partial x}+u^0_2(b_2(t,x)-a_{22}(t,x)u^0_2), \quad (t,x)\in\mathbb{R}\times\mathbb{R}.$$
This implies that $u^0_2(t,-x)$ is also a  time and space periodic positive solution for scalar equation \eqref{VLseq} with $d(t,x)=d_2(t,x)$, $g(t,x)=g_2(t,x)$, $c(t,x)=b_2(t,x)$ and $e(t,x)=a_{22}(t,x), \forall (t,x)\in\mathbb{R}\times\mathbb{R}$. In view of Proposition \ref{VLexistence}, the uniqueness of the time and space periodic positive solution implies that $u^*_2(t,-x)=u^*_2(t,x),\forall (t,x)\in\mathbb{R}\times\mathbb{R}$. Taking $d(
t,x)=d_2(t,x)$, $m(t,x)=b_2(t,x)-a_{22}(t,x)u^*_2(t,x)$, and $g(t,x)=g_2(t,x)$ in \eqref{geep}, we see from the former case in Lemma \ref{mp}(c) that $\lambda_2(\mu)$ is an even function on $\mathbb{R}$, and hence, $\lambda'_2(0)=0$. Since $\lambda_2(0)=0$, it follows that
$\lim_{\mu\rightarrow 0^+}\frac{\lambda _2(\mu)}{\mu}=\lambda_2'(0)=0<c^*_{1+}.$

 In the case where all the coefficient functions of system \eqref{NModel} are independent of $t$,  $d_i\in C^{1+\nu}(\mathbb{R})(\nu\in(0,1))$ and $g_i(t,x)=-d_i'(x),\forall (t,x)\in\mathbb{R}\times\mathbb{R}, i=1,2$, it easily follows from the latter case in  Lemma \ref{mp}(c) or the proof of \cite[lemma 5.2]{YZ}.
\end{proof}
Now we  introduce the following assumptions on system \eqref{dhmp}:
\begin{enumerate}
\item[(M)] $a(t,x)$ is non-trivial and even in $x$, and $\overline{a}=\frac{1}{L}\int_{0}^{1}\int_{0}^{L}a(t,x)dxdt\ge0$.
\end{enumerate}
\begin{lemma}\label{M}
Let (M) hold. Then (H1)--(H3) are valid for system \eqref{dhmp} if either of the following holds:
\begin{enumerate}
	\item[(a)]  $d_2$ is large enough;
	\item[(b)] $\omega$ is small enough.
\end{enumerate}
\end{lemma}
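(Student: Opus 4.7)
My plan is to verify (H1)--(H3) in turn, exploiting that both species in \eqref{dhmp} share the growth function $a_\omega$ and identical competition coefficients, and that (a) large $d_2$ and (b) small $\omega$ are two singular-perturbation regimes amenable to averaging.

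For (H1), let $\psi>0$ be the periodic principal eigenfunction of $-\psi_t+d_i\psi_{xx}+a_\omega\psi=\lambda\psi$. Dividing by $\psi$, integrating over one space-time period, and using $\psi_{xx}/\psi=\partial_x(\psi_x/\psi)+(\psi_x/\psi)^2$ together with periodicity gives
\begin{equation*}
\lambda(d_i,0,a_\omega)=\overline{a}+d_i\,\overline{(\psi_x/\psi)^2}.
\end{equation*}
Under (M), $a_\omega$ depends nontrivially on $x$, so $\psi$ cannot be $x$-constant and $\overline{(\psi_x/\psi)^2}>0$; combined with $\overline{a}\ge0$ this yields (H1) for any $d_i>0$ and $\omega>0$.

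For (H2), the coefficient $a_\omega-u^*_2$ depends on $d_2$ and $\omega$ through the scalar logistic equation $u_t=d_2u_{xx}+u(a_\omega-u)$. In regime (a), averaging in the large-diffusion limit gives $u^*_2(t,x)\to\tilde u(t)$ uniformly, where $\tilde u$ is the nonnegative $\omega$-periodic solution of $\tilde u'=\tilde u(\overline{a}_\omega(t)-\tilde u)$ with $\overline{a}_\omega(t):=L^{-1}\int_0^L a_\omega(t,x)\,dx$. Integrating $(\log\tilde u)'$ when $\tilde u>0$ gives $\overline{\tilde u}=\overline{a}$, while $\tilde u\equiv0$ when $\overline{a}=0$; in either case $\overline{a_\omega-\tilde u}=0$. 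Since $a_\omega-\tilde u$ retains the $x$-dependence of $a_\omega$, the argument of (H1) yields $\lambda(d_1,0,a_\omega-\tilde u)>0$, and continuous dependence of the principal eigenvalue on its zeroth-order coefficient propagates this to $\lambda(d_1,0,a_\omega-u^*_2)>0$ for $d_2$ large. In regime (b), the rapidly oscillating $a_\omega$ averages to $\tilde a(x):=\int_0^1 a(s,x)\,ds$, and $u^*_2\to\tilde u_2(x)$, the unique positive $L$-periodic steady state of $d_2 v''+v(\tilde a-v)=0$. By construction $\tilde u_2$ is itself the positive principal eigenfunction of $d_2\psi''+(\tilde a-\tilde u_2)\psi=0$, so $\lambda(d_2,0,\tilde a-\tilde u_2)=0$; since $\tilde a-\tilde u_2$ is nonconstant in $x$ and $d\mapsto\lambda(d,0,\tilde a-\tilde u_2)$ is strictly decreasing, $d_1<d_2$ yields $\lambda(d_1,0,\tilde a-\tilde u_2)>0$, and homogenization of the eigenvalue problem then gives $\lambda(d_1,0,a_\omega-u^*_2)>0$ for $\omega$ small.

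For (H3), suppose toward contradiction that $(u_1,u_2)$ is a positive time-space periodic solution. Each $u_i$ is a positive periodic eigenfunction of $-\partial_t+d_i\partial_{xx}+(a_\omega-u_1-u_2)$ with eigenvalue zero, so
\begin{equation*}
\lambda(d_1,0,a_\omega-u_1-u_2)=\lambda(d_2,0,a_\omega-u_1-u_2)=0.
\end{equation*}
Strict monotonicity of $d\mapsto\lambda(d,0,c)$ for coefficients nonconstant in $x$, combined with $d_1<d_2$, forces $c(t):=a_\omega-u_1-u_2$ to be $x$-independent. The Hopf--Cole substitution $u_i=e^{\int_0^t c(s)\,ds}v_i$ then reduces each $u_i$ to a positive $\omega$-periodic solution of $v_{i,t}=d_i v_{i,xx}$ on the torus, which must be a positive constant. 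Hence $u_i$ is $x$-independent, and $a_\omega=c(t)+u_1(t)+u_2(t)$ becomes $x$-independent as well, contradicting (M). The main obstacle I anticipate lies in the two singular limits used in (H2): the large-diffusion convergence $u^*_2\to\tilde u$ and the $\omega\to 0$ homogenization $u^*_2\to\tilde u_2$, together with the ensuing continuity of $\lambda(d_1,0,\cdot)$. Standard parabolic theory (Schauder estimates, compactness, and stability of simple eigenvalues under uniform perturbation of the zeroth-order coefficient) handles regime (a) cleanly; regime (b) is more delicate because $a_\omega$ itself does not converge uniformly, but the two-scale/averaging framework for linear parabolic eigenvalue problems with rapidly oscillating coefficients delivers the required limit.
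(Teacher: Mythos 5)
Your verification of (H1) is essentially identical to the paper's: divide by the positive periodic eigenfunction, integrate over a space-time cell, use periodicity to drop the $\psi_t/\psi$ and $\partial_x(\psi_x/\psi)$ terms, and conclude $\lambda(d_i,0,a)=\overline{a}+d_i\,\overline{(\psi_x/\psi)^2}>\overline{a}\ge 0$ because $\psi$ is nontrivial in $x$. Your (H2) argument is a genuine departure from the paper: you sketch an explicit averaging/homogenization proof (large-$d_2$ spatial averaging in case (a), fast-in-time averaging in case (b), followed by stability of the simple principal eigenvalue under uniform perturbation of the zeroth-order coefficient), whereas the paper simply invokes \cite[Lemmas 3.6--3.7, Theorem 5.3]{HMP}.

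The (H3) argument, however, contains a genuine gap. You assert that $d\mapsto\lambda(d,0,c)$ is \emph{strictly monotone} whenever $c$ is nonconstant in $x$ and conclude, from $\lambda(d_1,0,c)=\lambda(d_2,0,c)=0$ with $d_1\neq d_2$, that $c$ must be $x$-independent. This monotonicity is correct in the \emph{time-independent} case (where the Rayleigh quotient applies), but it is false in general for time-periodic $c$: the possibility that the principal eigenvalue of the space-time periodic parabolic operator $-\partial_t+d\partial_{xx}+c(t,x)$ is \emph{non-monotone} in $d$ is precisely the central phenomenon identified by Hutson, Mischaikow and Pol\'{a}\u{c}ik in \cite{HMP}, and it is why the ``slower diffuser wins'' conclusion of the autonomous case can fail in periodic time. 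The integral identity you would want to use here, $\lambda(d)=\overline{c}+d\,\overline{(\psi^{(d)}_x/\psi^{(d)})^2}$, says nothing about monotonicity because the eigenfunction $\psi^{(d)}$ changes with $d$; when $\overline{c}<0$, both terms in $d_i\,\overline{(\psi^{(d_i)}_x/\psi^{(d_i)})^2}=-\overline{c}$ can be satisfied simultaneously for $d_1\neq d_2$. Moreover, your argument as written makes no use of ``$d_2$ large'' or ``$\omega$ small'': if it were correct it would establish (H3) for \emph{all} $d_1<d_2$ and \emph{all} $\omega$, which is strictly stronger than what the lemma asserts --- a warning sign. The paper sidesteps this difficulty by citing \cite[Theorem 5.3(a),(b)]{HMP}, whose proofs exploit the asymptotic regimes (spatial averaging for $d_2\to\infty$, temporal averaging for $\omega\to 0$) rather than any universal monotonicity of the eigenvalue. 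To repair your proof of (H3) you would need to perform the same averaging reductions you invoked for (H2), reduce to a time-independent eigenvalue problem in the respective limit, and only then appeal to strict monotonicity in $d$.
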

\begin{proof} Since we consider the periodic initial value problem. We may regard system \eqref{dhmp} as in the following system:
	\begin{eqnarray}\label{hmp}
	& &\frac{\partial u_1}{\partial t}=d_1 \Delta u_1+u_1(a_\omega(t,x)-u_1-u_2), \\
	& &\frac{\partial u_2}{\partial t}=d_2\Delta u_2+u_2(a_\omega(t,x)-u_1-u_2),\quad t>0, \ x\in(0,L), \nonumber\\
	& & u_i(0,x)=\phi_i(x)\in\overline{X}:=\{\phi\in C([0,L],\mathbb{R}): \phi(0)=\phi(L)\}, i=1,2.\nonumber
	\end{eqnarray}
Let $\phi(t,x)$ be the positive time-space periodic eigenfunction associated with the principal eigenvalue $\lambda(d_1,0,a)$, that is,
$$-\phi_t+d_1\phi_{xx}+a_\omega(t,x)\phi=\lambda(d_1,0,a)\phi, \quad (t, x)\in\mathbb{R}\times\mathbb{R}.$$
Dividing the above equation by $\phi$ and integrating by parts on $[0,L]\times[0,\omega]$, we get
$$\lambda(d_1,0,a)=\frac{1}{ L}\int_{0}^{1}\int_{0}^{L}a(t,x)dxdt+\frac{d_1}{\omega L}\int_{0}^{\omega}\int_{0}^{L}\left[\frac{ \phi_x(t,x)}{\phi(t,x)}\right]^2dxdt.$$
Since $a(t,x)$ is non-trivial in $x$, a simple computation shows that $\phi(t,x)$ is also non-trivial in $x$. Therefore, we have $$\lambda(d_1,0,a)>\frac{1}{ L}\int_{0}^{1}\int_{0}^{L}a(t,x)dxdt=\overline{a}\ge0.$$
Similarly, we can show that $\lambda(d_2,0,a)>0$.
It follows that (H1) holds provided that (M) is valid.

In the case where $d_2$ is large enough,  let $A_{d_2}$ denote the unbounded closed operator on $\overline{X}$ with the maximum norm defined by
$$D(A_{d_2})=\{u:u,u',u''\in\overline{X} \}, \quad A_{d_2}u=d_2u'', \forall u\in D(A_{d_2}).$$
Then \cite[Chapter 8, Lemma 2.1]{Pazy} implies that $A_{d_2}$ generates an analytic semigroup $e^{A_{d_2}t}$ on $\overline{X}$. By the essentially same arguments as in \cite[Lemmas 3.6(c)--3.7 and Theorem 5.3(a)]{HMP},
it follows that (H2) and (H3) hold true.

In the case where $\omega$ is small enough, by the arguments similar to those in \cite[Lemma 3.6(b) and Theorem 5.3(b)]{HMP}, we can also show that (H2) and (H3) are valid, and hence, system \eqref{dhmp}  has three time-space periodic solutions $E_0:=(0,0)$, $E_1:=(u^*_1(t,x),0)$ and $E_2:=(0,u^*_2(t,x))$ in $\mathbb{P}_+$.
\end{proof}
 As a consequence of Lemma \ref{M} and Theorem \ref{VLEQ}, we have the following result.
\begin{theorem}
Let (M) and either case (a) or (b) in Lemma \ref{M} hold. Then $E_1:=(u^*_1(t,x),0)$ is globally asymptotically stable for all
initial values $\phi=(\phi_1,\phi_2)\in\mathbb{P}_+$ with $\phi_1\not\equiv0$.
\end{theorem}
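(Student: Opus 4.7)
The plan is to derive the theorem as an immediate corollary of Lemma \ref{M} combined with Theorem \ref{VLEQ}. First, I would observe that system \eqref{dhmp} is the special case of the general competition system \eqref{VL} obtained by taking $L_i u = d_i \Delta u$ (so $g_i \equiv 0$ and $d_i$ is a positive constant), $b_1(t,x) = b_2(t,x) = a_\omega(t,x)$, and $a_{ij}(t,x) \equiv 1$ for $1 \le i,j \le 2$. The standing regularity assumptions (a)--(c) on the coefficients from Section 3.1 are satisfied automatically: $a_\omega$ is a continuous time-space periodic function of the required H\"older class, $d_1, d_2 > 0$ are constants making $L_i$ uniformly elliptic, and all competition coefficients are positive. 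Hence $\{Q_t\}_{t\ge 0}$ generated by \eqref{dhmp} fits into the framework of Proposition \ref{verification}.

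Second, I would invoke Lemma \ref{M} to obtain (H1)--(H3) for system \eqref{dhmp}: assumption (M) together with either the largeness of $d_2$ (case (a)) or the smallness of $\omega$ (case (b)) yields the principal eigenvalue positivity $\lambda(d_i, 0, a_\omega) > 0$ (giving the two semi-trivial periodic states $E_1 = (u_1^*(t,x), 0)$ and $E_2 = (0, u_2^*(t,x))$ in (H1)), the instability condition $\lambda(d_1, 0, a_\omega - u_2^*) > 0$ in (H2), and the exclusion of coexistence periodic solutions in (H3).

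Third, I would apply Theorem \ref{VLEQ} directly. Since (H1)--(H3) have been verified, its conclusion reads: $E_1 = (u_1^*(t,x), 0)$ is globally asymptotically stable for all initial values $\phi = (\phi_1, \phi_2) \in \mathbb{P}_+$ with $\phi_1 \not\equiv 0$, which is exactly the desired statement.

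Since every ingredient is already in place, there is no substantial obstacle; the only conceptual remark worth making is that the hypothesis $\phi_1 \not\equiv 0$ cannot be relaxed to $\phi \not\equiv 0$, because the subspace $\{u_1 \equiv 0\}$ is forward-invariant and solutions starting there converge to $E_2 \ne E_1$ by Proposition \ref{VLexistence}(ii). Thus the theorem is, strictly speaking, a corollary, and the proof will consist only of the two-line chain of implications \emph{(M) and (a) or (b)} $\Rightarrow$ \emph{(H1)--(H3)} $\Rightarrow$ \emph{global asymptotic stability of $E_1$}.
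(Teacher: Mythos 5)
Your proof is correct and matches the paper's own derivation exactly: the paper states this theorem immediately after Lemma~\ref{M} with the remark ``As a consequence of Lemma~\ref{M} and Theorem~\ref{VLEQ}, we have the following result,'' so it too treats the theorem as the two-step chain (M) plus case (a) or (b) $\Rightarrow$ (H1)--(H3) $\Rightarrow$ global asymptotic stability of $E_1$ via Theorem~\ref{VLEQ}. Your closing remark about the necessity of $\phi_1\not\equiv 0$ is a sound and useful observation, even though the paper does not spell it out.
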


For simplicity, we transfer system \eqref{dhmp} into the following cooperative system:
\begin{eqnarray}\label{dhmp2}
   & &\frac{\partial u_1}{\partial t}=d_1 \frac{\partial^2 u_1}{\partial x^2}+u_1(a_\omega(t,x)-u^*_2(t,x)-u_1+u_2), \\
   & &\frac{\partial u_2}{\partial t}=d_2\frac{\partial^2 u_2}{\partial x^2}+u_1(u^*_2(t,x)-u_2)
   +u_2(a_\omega(t,x)-2u^*_2(t,x)+u_2),\quad t>0, \ x\in\mathbb{R}. \nonumber
   \end{eqnarray}
   The next result is the consequence of Theorem \ref{Qspreading}, Remark \ref{Re1} and Proposition \ref{lb}.
   \begin{theorem}
   Assume that (M) and either case (a) or (b) in Lemma \ref{M} hold. Let $u(t,\cdot,\phi)$ be the solution of system \eqref{dhmp2} with $u(0,\cdot)=\phi\in\mathcal{C}_{\beta(0,\cdot)}$. Then there exists a positive real number $\bar{c}_+$ such that the following statements are valid for system \eqref{dhmp2}:
   \begin{enumerate}
   			\item[(i)]If $\phi\in\mathcal{C}_{\beta(0,\cdot)}$, $0\le \phi\le \omega\ll \beta$ for some $\omega\in \mathcal{C}^{per}_{\beta(0,\cdot)}$, and $\phi(x)=0, \forall x\ge H$, for some $H\in \mathbb{R}$, then $\lim_{t\rightarrow\infty,x\ge ct}u(t,x,\phi)=0$ for any $c>\bar{c}_+$.
   			\item[(ii)]If $\phi\in\mathcal{C}_{\beta(0,\cdot)}$ and $\phi(x)\ge \sigma$, $\forall x\le K$, for some $\sigma\in \mathbb{R}^2$ with $\sigma\gg0$ and $K\in\mathbb{R}$, then $\lim_{t\rightarrow\infty,x\le ct}(u(t,x,\phi)-u^*(t,x))=0$ for any $c\in(0,\bar{c}_+)$.
   	\end{enumerate}
   \end{theorem}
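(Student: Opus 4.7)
The plan is to reduce this corollary directly to Theorem~\ref{Qspreading} (together with Remark~\ref{Re1} and Proposition~\ref{lb}) by verifying that the cooperative system \eqref{dhmp2} satisfies all five hypotheses (H1)--(H5), and then to extract the positivity of $\bar c_+$ from the linear lower bound in Proposition~\ref{lb}. First, I would observe that system \eqref{dhmp2} is exactly the cooperative transform of \eqref{VL} with the specific choices $L_i u=d_i u_{xx}$, $b_i(t,x)=a_\omega(t,x)$, $a_{ii}\equiv 1$, $a_{12}\equiv 1$, $a_{21}\equiv 1$. Under assumption (M) and either of the cases in Lemma~\ref{M}, that lemma gives (H1)--(H3); in particular, the two semi-trivial time-space periodic solutions $(u_1^*,0)$ and $(0,u_2^*)$ exist and coexistence is excluded.

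Next, to check (H4) and (H5) I would invoke Lemma~\ref{H45}. Since $g_i\equiv 0$ is trivially odd in $x$ and, by (M), $a_\omega(t,x)$ is even in $x$, all coefficient functions of \eqref{dhmp2} fall in the first alternative of Lemma~\ref{H45} (note that $u_2^*(t,x)$ inherits evenness in $x$ from the uniqueness part of Proposition~\ref{VLexistence}, exactly as in the proof of Lemma~\ref{H45}). Hence (H4) and (H5) both hold. With (H1)--(H5) in hand, Theorem~\ref{Qspreading} (or equivalently Remark~\ref{Re1}) produces a single rightward spreading speed $\bar c_+\in\R$ for which the dichotomy (i)--(ii) of the statement is precisely Theorem~\ref{Qspreading}(i)--(ii) specialized to \eqref{dhmp2}, with $\beta(t,x)=(u_1^*(t,x),u_2^*(t,x))$.

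The only remaining point is to show $\bar c_+>0$, so that the interval $(0,\bar c_+)$ in statement (ii) is non-empty. By definition $\bar c_+\ge c_+^*$, and Proposition~\ref{lb} gives $c_+^*\ge c_+^0=\inf_{\mu>0}\lambda_0(\mu)/\mu$, where $\lambda_0(\mu)$ is the principal eigenvalue of the linearized problem \eqref{eep0} with $L_1=d_1\partial_x^2$. Since (H2) yields $\lambda_0(0)>0$, the convexity of $\lambda_0$ (Lemma~\ref{mp}(b)) together with the evenness $\lambda_0(\mu)=\lambda_0(-\mu)$ granted by the former case of Lemma~\ref{mp}(c) forces $\lambda_0(\mu)\ge\lambda_0(0)>0$ for all $\mu\in\R$; consequently $c_+^0>0$ and therefore $\bar c_+>0$.

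The main obstacle, and the only place where something beyond bookkeeping is required, is verifying that Lemma~\ref{H45} is truly applicable: one must be certain that the periodic principal eigenfunction and the semi-trivial equilibrium $u_2^*$ carry the spatial symmetry needed to make $\lambda_2(\mu)$ even in $\mu$. This is handled exactly as in the proof of Lemma~\ref{H45} by using the uniqueness of the positive periodic solution in Proposition~\ref{VLexistence} to conclude $u_2^*(t,-x)=u_2^*(t,x)$; once this evenness is established, the remainder of the argument is the straightforward specialization outlined above.
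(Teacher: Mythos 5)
Your proposal is correct and follows essentially the same route as the paper, which dispatches this theorem in a single sentence citing Theorem~\ref{Qspreading}, Remark~\ref{Re1}, and Proposition~\ref{lb}. You unpack those citations in exactly the way the paper intends: Lemma~\ref{M} gives (H1)--(H3); the first alternative of Lemma~\ref{H45} (with $g_i\equiv 0$ odd, $a_\omega$ even by (M), and $u_2^*$ even by the uniqueness argument in Proposition~\ref{VLexistence}) gives (H4)--(H5), which is precisely what Remark~\ref{Re1} packages; and the positivity of $\bar c_+$ comes from $\bar c_+=c_+^*\ge c_+^0$ together with the evenness-plus-convexity argument for $\lambda_0$, matching the way the paper argues $c_{1+}^*>0$ inside the proof of Lemma~\ref{H45}. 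The only step you state somewhat tersely — that $\lambda_0(\mu)\ge\lambda_0(0)>0$ for all $\mu$ implies $\inf_{\mu>0}\lambda_0(\mu)/\mu>0$ — would strictly speaking also need the observation that the ratio blows up as $\mu\to 0^+$ and stays bounded below as $\mu\to\infty$ (by convexity, $\lambda_0(\mu)/\mu\to\lambda_0'(1)>0$ or larger), but this is the same level of detail the paper itself uses, so it is not a gap relative to the source.
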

In view of Theorem \ref{MIN}, we have the following result on periodic traveling waves for system \eqref{dhmp}.
\begin{theorem}
Let (M) and either case (a) or (b) in Lemma \ref{M} hold.
Then for any $c\ge\bar{c}_+$, system \eqref{dhmp} has time-space periodic traveling wave $(U(t,x,x-ct),V(t,x,x-ct))$ connecting $(u^*_1(t,x),0)$ to $(0,u^*_2(t,x))$ with the wave profile component $U(t,x,\xi)$  being continuous and non-increasing in $\xi$, and $V(t,x,\xi)$ being continuous and non-decreasing in $\xi$. While for any $c\in(0,\bar{c}_+)$, system \eqref{dhmp} admits no periodic rightward traveling wave connecting $(u^*_1(t,x),0)$ to $(0,u^*_2(t,x))$.
\end{theorem}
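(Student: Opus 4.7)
The plan is to reduce this statement to Theorem \ref{MIN} applied to the cooperative reformulation \eqref{dhmp2}, and then translate back to \eqref{dhmp} via the change of variables $v_1 = u_1$, $v_2 = u^*_2(t,x) - u_2$. So the first task is to verify the hypotheses (H1)--(H4) for system \eqref{dhmp2}. Hypotheses (H1)--(H3) are handed over directly by Lemma \ref{M} under (M) together with either case (a) or (b). For (H4), I would invoke Lemma \ref{H45}: in \eqref{dhmp} there is no advection, so $g_i \equiv 0$ is trivially odd in $x$, and by assumption (M) the coefficient $a(t,x)$ is even in $x$; all the coefficients of the transformed system \eqref{dhmp2} are thus even in $x$ except for the (trivially odd) drift, so $c^*_{1+} + c^*_{2-} > 0$.

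With (H1)--(H4) in hand, Theorem \ref{MIN} applied to \eqref{dhmp2} yields, for each $c \ge \bar{c}_+$, a time-space periodic traveling wave $(\tilde{U}(t,x,x-ct),\tilde{V}(t,x,x-ct))$ connecting $\beta(t,x) = (u^*_1(t,x), u^*_2(t,x))$ to $0$, with $\tilde{U}(t,x,\xi)$ and $\tilde{V}(t,x,\xi)$ both continuous and non-increasing in $\xi$. I then set
\[
U(t,x,\xi) := \tilde{U}(t,x,\xi), \qquad V(t,x,\xi) := u^*_2(t,x) - \tilde{V}(t,x,\xi),
\]
which reverses the cooperative change of variables. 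Continuity and the monotonicity statements transfer immediately: $U$ is non-increasing in $\xi$, and since $\tilde{V}$ is non-increasing, $V$ is non-decreasing in $\xi$. The required boundary conditions follow from the connection of $(\tilde{U},\tilde{V})$: as $\xi \to -\infty$, $(\tilde{U},\tilde{V}) \to (u^*_1,u^*_2)$, so $(U,V) \to (u^*_1,0)$; as $\xi \to +\infty$, $(\tilde{U},\tilde{V}) \to (0,0)$, so $(U,V) \to (0,u^*_2)$. A direct substitution confirms that $(U,V)$ solves \eqref{dhmp}.

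For the non-existence assertion with $c \in (0,\bar{c}_+)$, I would argue by contradiction: any time-space periodic rightward traveling wave $(U,V)$ of \eqref{dhmp} connecting $(u^*_1,0)$ to $(0,u^*_2)$ would, via the same change of variables, yield a traveling wave $(\tilde{U},\tilde{V}) = (U, u^*_2 - V)$ of \eqref{dhmp2} connecting $\beta(t,x) = (u^*_1,u^*_2)$ to $0$ with both components non-increasing in $\xi$. By Theorem \ref{MIN}(3) this forces $c \ge \bar{c}_+$, contradicting $c < \bar{c}_+$.

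The main obstacle, such as it is, is entirely bookkeeping rather than analysis: one must confirm that (H4) indeed falls out of Lemma \ref{H45} in the no-advection setting considered here, and verify that the affine change of variables $v_2 \mapsto u^*_2 - v_2$ commutes cleanly with the notion of traveling wave in Definition \ref{wave-our-def} so that periodicity in $t$ and $x$, continuity, monotonicity (with the sign flip for $V$), and the limits at $\pm\infty$ all transfer between \eqref{dhmp} and \eqref{dhmp2}. No new estimates are required beyond those already established in Theorems \ref{m-TW}, \ref{MIN} and the lemmas of Section 3.
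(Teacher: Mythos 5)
Your proof is correct and matches the paper's intended argument: (H1)--(H3) come from Lemma \ref{M}, (H4) follows from Lemma \ref{H45} because the drift vanishes (trivially odd in $x$) while all remaining coefficients of \eqref{dhmp2} are even in $x$, and then Theorem \ref{MIN} applied to the cooperative reformulation \eqref{dhmp2} yields the wave, which you recover for \eqref{dhmp} by reversing the affine substitution $v_2 = u^*_2 - u_2$ (flipping the monotonicity of the second component and exchanging the limit states). The paper simply says ``In view of Theorem \ref{MIN}\ldots'' without detail; your write-up makes explicit the bookkeeping the paper treats as routine.
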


\

\end{document}